\documentclass[a4paper,11pt]{article}

\usepackage{mathrsfs}
\usepackage{amsmath}
\usepackage{bm}
\usepackage{amssymb}
\usepackage{makeidx}
\makeindex
\usepackage[dvipsnames]{xcolor}
\usepackage[colorlinks=true, linkcolor=blue, citecolor=blue, urlcolor=blue]{hyperref}
\usepackage{titlesec}
\usepackage{amsthm}
\usepackage{setspace}
\usepackage{pifont}
\usepackage{esint}
\usepackage{graphicx}
\usepackage{float}
\usepackage{enumerate}
\usepackage{tikz}

\numberwithin{equation}{section}
\usepackage[backend=biber,style=alphabetic,sorting=nyt,giveninits=true]{biblatex}
\DeclareNameAlias{default}{given-family}
\DeclareNameAlias{sortname}{family-given}

\DeclareFieldFormat[article]{title}{#1}

\renewbibmacro{in:}{}

\DeclareFieldFormat{pages}{#1}

\DeclareFieldFormat[article]{journaltitle}{\mkbibemph{#1}}

\DeclareFieldFormat[article]{volume}{#1}

\renewbibmacro*{journal+issuetitle}{%
  \usebibmacro{journal}%
  \setunit*{\addspace}%
  \printfield{volume}%
  \setunit{\addspace}%
  \printtext[parens]{\printfield{year}}%
  \iffieldundef{number}
    {}
    {\setunit{\addcomma\space}\printtext{no.\space\printfield{number}}}%
  \newunit
}

\renewbibmacro*{issue+date}{}
\titleformat{\section}[block]
  {\bfseries\Large}
  {\thesection}
  {1em}
  {}

\titleformat{\subsection}[block]
  {\bfseries\large}
  {\thesubsection}
  {1em}
  {}

\titleformat{\subsubsection}[runin]
  {\bfseries\normalsize}
  {\thesubsubsection}
  {0.75em}
  {}

\titlespacing*{\section}{0pt}{3.5ex plus 1ex minus .2ex}{2.3ex plus .2ex}
\titlespacing*{\subsection}{0pt}{3ex plus 1ex minus .2ex}{1.5ex plus .2ex}
\titlespacing*{\subsubsection}{0pt}{2ex plus 0.5ex minus .2ex}{1em}

\titleformat{\section}[block]{\bfseries\Large}{\thesection}{1em}{}

\newtheoremstyle{mystyle}
{}{}{\upshape}{}{\bfseries}{.}{.5em}{}
\theoremstyle{mystyle}

\theoremstyle{definition}
\newtheorem{theorem}{Theorem}[section]
\newtheorem{lemma}[theorem]{Lemma}
\newtheorem{proposition}[theorem]{Proposition}

\newtheorem{coro}[theorem]{Corollary}
\newtheorem{definition}[theorem]{Definition}

\newtheorem{remark}[theorem]{Remark}

\begin{document}
\nocite{*}

	\title{\large
		Nonlocal Characterizations of Stochastic Completeness \\ on Complete Riemannian Manifolds}
	
	\author{\small Rui Chen, Bobo Hua}
	\date{}
	\maketitle
	\thispagestyle{empty}
	\pagenumbering{arabic}
	
	\noindent\textbf{Abstract:}
In this paper, we first prove that the following generalized conservation principle holds on complete Riemannian manifolds: for every \(0<s<1\) and \(t>0\),
\[ T_t^{(s)}\mathbf 1+\int_0^t T_\tau^{(s)}\mathcal R_s\,d\tau=1 \qquad\text{on }M, \]
where \(\mathcal R_s\) is the intrinsic killing term measuring the loss of mass of the subordinate semigroup, and the condition \(\mathcal R_s\equiv0\) is equivalent to the stochastic completeness of \(M\).

We then provide  several new nonlocal characterizations of stochastic completeness. In particular, we show that stochastic completeness is equivalent to genuinely nonlocal conditions, including the zero-mean identity
\[
\int_M (-\Delta)^s\varphi\,dV_g=0
\qquad\forall\,\varphi\in C_c^\infty(M),
\]
as well as the uniqueness of bounded distributional solutions to the associated fractional elliptic and parabolic equations. We also revisit the equivalent \(L^1\)-core characterization for the generator of the heat semigroup, which plays an important role in our approach.

In addition, we prove \(L^p\)-contractivity and smoothing properties of the subordinate semigroup, establish both short-time and long-time asymptotic results for the fractional heat kernel, derive the short-time asymptotics of jump probabilities for the associated Markov process, and study the variational characterization and minimality properties of the fractional resolvent. Together, these results provide a unified analytic and probabilistic framework for the fractional Laplacian on complete Riemannian manifolds.

\medskip
\noindent \textbf{Keywords:} stochastic completeness; fractional Laplacian; subordinate semigroup; generalized conservation property; fractional elliptic and parabolic equations

\medskip
\noindent \textbf{2020 Mathematics Subject Classification:} 58J35, 35R11, 47D07, 60J35

	\bigskip

	% ========== Table of contents ==========
	\tableofcontents
	\thispagestyle{empty}

	% ========== Main text ==========
	\setcounter{page}{1}
	\section{Introduction and Main Results}\label{Introduction}
	In this paper, we establish some nonlocal characterizations of stochastic completeness on complete Riemannian manifolds in terms of the fractional Laplacian. Our results reveal both a close relation and a fundamental distinction between the local and nonlocal settings.

    Stochastic completeness is a fundamental global property in geometric analysis and probability theory on manifolds. It expresses the conservation of total heat mass under the heat flow, or equivalently, the non-explosion of the Brownian motion associated with the Laplace--Beltrami operator. More precisely, if \(\{P_t\}_{t\ge0}\) denotes the heat semigroup on a complete Riemannian manifold \((M,g)\), then stochastic completeness means
\[
P_t\mathbf 1=\mathbf 1
\qquad\text{for all }t>0.
\]
This property plays a central role in the study of heat kernels, global geometry, potential theory, and the interplay between analytic and probabilistic aspects of diffusion processes on manifolds. There are several classical characterizations of stochastic completeness. A basic analytic characterization is the following theorem.

\begin{theorem}\cite[Theorem 8.18]{grigoryan2009heat}\label{thm:classical-stochastic-completeness}
 Fix \(\alpha>0\) and \(T\in(0,\infty]\). Then the following assertions are equivalent:

\medskip
\noindent\textnormal{(i)} \((M,g)\) is stochastically complete, namely
\[
P_t\mathbf 1=\mathbf 1
\qquad\text{for all }t>0.
\]

\medskip
\noindent\textnormal{(ii)} The equation
\[
\Delta v=\alpha v
\qquad\text{in }M
\]
admits no nontrivial bounded nonnegative solution.

\medskip
\noindent\textnormal{(iii)} The equation
\[
-\Delta v+\alpha v=1
\qquad\text{in }M
\]
admits no nontrivial bounded nonnegative solution.

\medskip
\noindent\textnormal{(iv)} The Cauchy problem
\[
\begin{cases}
\partial_t u=\Delta u & \text{in }(0,T)\times M,\\
u(\cdot,0)=u_0 & \text{on }M
\end{cases}
\]
has at most one bounded solution for every bounded initial datum \(u_0\).
\end{theorem}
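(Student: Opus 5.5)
The plan is the standard one from Grigor'yan's book. Everything is organized around the function
\[
u(t,x):=1-P_t\mathbf 1(x)\in[0,1]
\]
and its Laplace transform
\[
v_\alpha(x):=\int_0^\infty e^{-\alpha t}\bigl(1-P_t\mathbf 1(x)\bigr)\,dt
=\frac1\alpha-\int_0^\infty e^{-\alpha t}P_t\mathbf 1\,dt
=\frac1\alpha-R_\alpha\mathbf 1 .
\]
Here \(R_\alpha\) is the minimal resolvent. Recall that \(P_t\mathbf 1\) is the minimal nonnegative solution of the heat equation with initial datum \(1\), obtained by exhausting \(M\) with relatively compact smooth domains \(\Omega_k\) and taking Dirichlet heat semigroups. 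Likewise, \(w:=R_\alpha\mathbf 1\) is the minimal nonnegative solution of \(-\Delta w+\alpha w=1\); it is smooth and satisfies \(0\le w\le 1/\alpha\). The proof will go through the cycle (i)\(\Rightarrow\)(iv)\(\Rightarrow\)(i) and the cycle (i)\(\Rightarrow\)(ii)\(\Leftrightarrow\)(iii)\(\Rightarrow\)(i).

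\textbf{The equivalence (ii)\(\Leftrightarrow\)(iii).} If \(v\) is a bounded solution of \(-\Delta v+\alpha v=1\), then \(\tilde v=1/\alpha-v\) solves \(\Delta\tilde v=\alpha\tilde v\). So solutions of the two equations correspond via \(v\mapsto 1/\alpha-v\), and one can pass between them. For the positivity bookkeeping, I compare with the minimal solution \(w=R_\alpha\mathbf 1\).

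\textbf{From (ii) to (i).} The function \(v_\alpha=1/\alpha-w\) satisfies \(\Delta v_\alpha=\alpha v_\alpha\) and \(0\le v_\alpha\le 1/\alpha\). Assuming (ii), \(v_\alpha\equiv0\). Since \(1-P_t\mathbf 1\ge0\) and its Laplace transform vanishes, we get \(P_t\mathbf 1=\mathbf 1\) for a.e. \(t\), hence for all \(t\) by continuity.

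\textbf{From (i) to (ii).} Let \(v\ge0\) be bounded with \(\Delta v=\alpha v\), and normalize so that \(v\le 1\). On each \(\Omega_k\), compare \(v\) with the solution \(h_k\) of \(\Delta h_k=\alpha h_k\) in \(\Omega_k\) with \(h_k=1\) on \(\partial\Omega_k\). The maximum principle gives \(v\le h_k\). Moreover \(h_k=1-\alpha R^{\Omega_k}_\alpha\mathbf 1\) decreases to \(1-\alpha R_\alpha\mathbf 1=\alpha v_\alpha\). Under (i), \(R_\alpha\mathbf 1=1/\alpha\), so \(v_\alpha=0\) and hence \(v\equiv0\).

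\textbf{The parabolic cycle.} For (iv)\(\Rightarrow\)(i): both \(u\equiv1\) and \(P_t\mathbf 1\) are bounded solutions with datum \(1\), so uniqueness forces \(P_t\mathbf 1=\mathbf 1\). For (i)\(\Rightarrow\)(iv), this step is the main obstacle. Let \(u\) be a bounded solution with \(u_0=0\) and \(|u|\le1\); by linearity it suffices to show \(u=0\). The first thing I would try is the parabolic maximum principle on \(\Omega_k\). Compare \(u\) with \(1-P^{\Omega_k}_t\mathbf 1\), which solves the heat equation, vanishes at \(t=0\), and equals \(1\) on \(\partial\Omega_k\). This yields \(|u|\le 1-P^{\Omega_k}_t\mathbf 1\). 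Letting \(k\to\infty\) gives \(|u|\le 1-P_t\mathbf 1=0\) under (i).

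The delicate points are the following:
\begin{enumerate}
\item[(a)] justifying the weak or parabolic maximum principle for merely bounded (possibly weak) solutions near \(t=0\), which requires interpreting the initial condition as locally uniform or \(L^1_{loc}\) convergence;
\item[(b)] handling finite \(T\): we need \(P_t\mathbf 1=1\) only on \((0,T)\), and the semigroup property shows that \(P_t\mathbf 1=1\) for small \(t\) propagates to all \(t\);
\item[(c)] the analogous strong convergence \(P^{\Omega_k}_t\mathbf 1\uparrow P_t\mathbf 1\), which follows from the minimality construction and can be assumed from standard heat kernel theory.
\end{enumerate}
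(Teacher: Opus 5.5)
Your treatment of (ii)$\Leftrightarrow$(iii) is where the proposal breaks. As (iii) is literally written, it cannot be proved at all. The constant $v\equiv 1/\alpha$ solves $-\Delta v+\alpha v=1$ on every manifold, and it is bounded, nonnegative and nonzero. So (iii) fails on every $M$, including stochastically complete ones, and (i)$\Rightarrow$(iii) is false.

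Your correspondence $v\mapsto 1/\alpha-v$ conceals exactly this, in two ways. It sends the trivial solution $0$ of $\Delta\tilde v=\alpha\tilde v$ to $1/\alpha$ rather than to $0$. It also does not preserve nonnegativity: from $v\ge0$ you only get $\tilde v\le1/\alpha$, of either sign. So ``one can pass between them'' proves neither direction.

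You should state the intended reading: the only bounded nonnegative solution of $-\Delta v+\alpha v=1$ is $v\equiv1/\alpha$. Then you need to actually run the comparison with $w$ that you only allude to.
\begin{itemize}
\item[(ii)$\Rightarrow$(iii)] Let $v\ge0$ be a bounded solution. The maximum principle on $\Omega_k$ gives $v\ge R^{\Omega_k}_\alpha\mathbf 1$, hence $v\ge w=R_\alpha\mathbf 1$. Under (ii), $v_\alpha=1/\alpha-w$ is a bounded nonnegative solution of $\Delta v=\alpha v$, so it vanishes and $w=1/\alpha$. Then $v-1/\alpha\ge0$ is again a bounded nonnegative solution of $\Delta v=\alpha v$, and it vanishes by (ii).
\item[(iii)$\Rightarrow$(ii)] Let $\tilde v\ge0$ be a bounded solution of $\Delta\tilde v=\alpha\tilde v$, rescaled so that $\tilde v\le1/\alpha$. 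The repaired (iii) applied to $1/\alpha-\tilde v\ge0$ gives $\tilde v=0$. Alternatively, applying it to $w$ gives $v_\alpha=0$, which is (i) by your Laplace-transform argument.
\end{itemize}

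The remaining steps are correct. They form the standard argument behind \cite[Theorem 8.18]{grigoryan2009heat}, which the paper only cites and does not prove. This covers the Laplace-transform step for (ii)$\Rightarrow$(i), the barrier $h_k=1-\alpha R^{\Omega_k}_\alpha\mathbf 1\downarrow\alpha v_\alpha$ for (i)$\Rightarrow$(ii), and both parabolic directions. The technical points (a)--(c) you list are the right ones.

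For (i)$\Rightarrow$(iv), your barrier $1-P^{\Omega_k}_t\mathbf 1$ works but is discontinuous at $\{0\}\times\partial\Omega_k$. An alternative is to compare with $e^{\alpha t}h_k$, which routes the argument through (ii). This function:
\begin{itemize}
\item solves the heat equation;
\item is $\ge1$ on the lateral boundary and $\ge0$ at $t=0$;
\item is continuous up to the corner;
\item decreases in $k$ to $e^{\alpha t}\alpha v_\alpha$, which vanishes under (ii).
\end{itemize}
This avoids the corner issue.
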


This classical equivalence goes back to Grigor'yan; see, for instance,
\cite[Theorem 8.18]{grigoryan2009heat} and the earlier papers
\cite{grigor1986stochastically,grigor1989stochastically}. More broadly, the study of stochastic completeness originates in classical works of Gaffney \cite{Gaffney1959}, Khas'minskii \cite{khas1960ergodic}, Yau \cite{Yau1978}, and Azencott \cite{azencott1974behavior}. A major milestone was Grigor'yan's seminal discovery that, on a geodesically complete manifold, stochastic completeness is closely related to the volume growth of geodesic balls; see \cite{grigor1986stochastically} and \cite[Theorem 1.5]{grigor2011stochastic}. Closely related characterizations also arise from weak maximum principles at infinity and Khas'minskii-type criteria, which connect stochastic completeness with the existence of suitable exhaustion functions and superharmonic barriers; see, for example, Khas'minskii's foundational work \cite{khas1960ergodic} and the later treatments by Pigola, Rigoli, and Setti \cite{pigola2003remark,pigola2008vanishing}. Altogether, these results show that stochastic completeness is deeply tied to the geometry at infinity, and in particular to the balance between volume growth and the tendency of heat to escape to infinity.

From this perspective, the classical local theory exhibits a remarkable limitation: for the Laplace--Beltrami operator, the identity
\[
\int_M \Delta\varphi\,dV_g=0
\qquad\forall\,\varphi\in C_c^\infty(M)
\]
always holds by the divergence theorem, independently of whether the manifold is stochastically complete. Therefore, compactly supported test functions cannot directly detect conservation in the local setting. By contrast, for the fractional Laplacian, the quantity
\[
\int_M (-\Delta)^s\varphi\,dV_g
\]
provides a sharp characterization of stochastic completeness.On the other hand, we show that on stochastically incomplete manifolds the loss of mass admits an explicit description in terms of the intrinsic killing term \(\mathcal R_s\), leading to the compensated conservation law
\[
T_t^{(s)}\mathbf 1+\int_0^t T_\tau^{(s)}\mathcal R_s\,d\tau=1
\qquad\text{on }M.
\]
Thus, although the fractional semigroup need not be conservative in the usual sense, its mass defect is precisely encoded by \(\mathcal R_s\).

In this paper, we always assume that \((M,g)\) is a connected, complete Riemannian manifold without boundary. In this setting, the Laplace--Beltrami operator \(-\Delta\), initially defined on \(C_c^\infty(M)\), admits a unique self-adjoint realization on \(L^2(M)\); for more details, see Section~\ref{comple}. Most of the main results can also be formulated on incomplete Riemannian manifolds, provided one works with the realization of the Laplacian and the corresponding heat semigroup associated with the minimal heat kernel, see \cite{grigoryan2009heat}. By the spectral theorem, there exists a unique projection-valued measure \(E\) such that
\[
-\Delta=\int_{[0,\infty)}\lambda\,dE(\lambda).
\]

We now recall some natural ways to define the fractional Laplacian on \((M,g)\); see \cite{chen2025logarithmic}.

\medskip
\noindent\textbf{\textnormal{(i) Spectral definition.}}
Let \(s>0\). The fractional power \((-\Delta)^s\) is defined by functional calculus as
\[
(-\Delta)^s
:=
\int_{[0,\infty)} \lambda^s\,dE(\lambda).
\]

\medskip
\noindent\textbf{\textnormal{(ii) Bochner integral representation.}}
Let \(0<s<1\). Since \(-\Delta\) is nonnegative and self-adjoint, it generates the heat semigroup \(\{e^{t\Delta}\}_{t\ge0}\) on \(L^2(M)\). For every \(f\in \operatorname{Dom}((-\Delta)^s)\), one has
\[
(-\Delta)^s f
=
\frac{s}{\Gamma(1-s)}
\int_0^\infty \frac{f-e^{t\Delta}f}{t^{1+s}}\,dt,
\]
where the integral is understood in the Bochner sense in \(L^2(M)\).

A natural question is whether \((-\Delta)^s\) admits a singular integral representation analogous to the Euclidean one. On general complete manifolds, however, this issue must be handled with particular care, since the validity of such a representation is closely related to the global behavior of the heat flow at infinity. To make this precise, for each \(t>0\) and \(x\in M\), we define the total mass function
\[
\Theta(t,x):=e^{t\Delta}\mathbf 1(x)=\int_M p(t,x,y)\,dV_g(y).
\]
Since the heat semigroup is positivity preserving and sub-Markovian, see Section \ref{comple}, one has
\[
0\le \Theta(t,x)\le 1,
\qquad t>0,\ x\in M.
\]
Moreover, by \cite[Proposition 1.9]{chen2025logarithmic}, the limit
\[
\Theta_\infty(x):=\lim_{t\to\infty}\Theta(t,x)
\]
exists and belongs to \([0,1]\). In general, the function \(\Theta_\infty\) measures the possible loss of mass at infinity. In probabilistic terms, it coincides with the survival probability of Brownian motion, namely the probability that the diffusion starting from \(x\) does not explode in finite time. 

For \(f\in C_c^\infty(M)\), we introduce the heat-kernel fractional Laplacian
\[
(-\Delta)_{\mathrm{hk}}^s f(x)
:=
\int_M \bigl(f(x)-f(y)\bigr)K_s(x,y)\,dV_g(y),
\]
where
\begin{equation}\label{frackernel}
K_s(x,y)
:=
\frac{s}{\Gamma(1-s)}
\int_0^\infty p(t,x,y)\,\frac{dt}{t^{1+s}},
\qquad x\neq y.
\end{equation}
According to the results proved in Section~\ref{Equiva}, this quantity is well defined. By \cite{chen2025logarithmic}, we know that
\begin{equation}\label{two equivalent}
    (-\Delta)^s f(x)
=
(-\Delta)_{\mathrm{hk}}^s f(x)+\mathcal R_s(x)\,f(x),
\end{equation}
where we denote
\begin{equation}\label{remainder}
    \mathcal R_s(x)
:=
\frac{s}{\Gamma(1-s)}
\int_0^\infty \bigl(1-\Theta(t,x)\bigr)\,\frac{dt}{t^{1+s}}.
\end{equation}
In particular,  the identity
\[
(-\Delta)^s f(x)=(-\Delta)_{\mathrm{hk}}^s f(x)
\]
holds if and only if \(\Theta(t,x)\equiv 1\), namely, when the manifold is
stochastically complete.

To study the fractional Laplacian from both analytic and probabilistic viewpoints, we shall work with the subordinate semigroup
\[
\{T_t^{(s)}\}_{t\ge0}=\{e^{-t(-\Delta)^s}\}_{t\ge0}
\]
and the associated resolvent family
\[
R_\alpha^{(s)}:=\bigl((-\Delta)^s+\alpha\bigr)^{-1},
\qquad \alpha>0.
\]
These objects provide the natural nonlocal counterparts of the heat semigroup and the classical resolvent of the Laplace--Beltrami operator; see Section~\ref{subordinate} and Section~\ref{resolvent} for more details.

 Motivated by the decomposition \eqref{two equivalent}, we are led to expect that, on a stochastically incomplete manifold, the loss of mass of the fractional semigroup is precisely captured by the term \(\mathcal R_s\). In other words, although the fractional semigroup is not conservative in the usual sense, its mass defect should be exactly compensated by the contribution of \(\mathcal R_s\). This naturally leads to compensated conservation identities for both the fractional semigroup and its resolvent, in close analogy with the generalized conservation property for Schrödinger semigroups; see \cite{MasamuneSchmidt2020}.

\begin{theorem}\label{thm:generalized identity}
  Let \(0<s<1\). Then, for every \(t>0\),
\[
T_t^{(s)}\mathbf 1+\int_0^t T_\tau^{(s)}\mathcal R_s\,d\tau=1
\qquad\text{on }M,
\]
and, for every \(\alpha>0\),
\[
\alpha R_\alpha^{(s)}\mathbf 1+R_\alpha^{(s)}\mathcal R_s=1
\qquad\text{on }M.
\]  
\end{theorem}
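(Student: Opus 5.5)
We will use subordination. Let \(\eta_t^{(s)}\) denote the one-sided \(s\)-stable subordinator density. It satisfies
\[
\int_0^\infty e^{-\lambda r}\eta_t^{(s)}(r)\,dr=e^{-t\lambda^s},
\]
so that \(T_t^{(s)}=\int_0^\infty P_r\,\eta_t^{(s)}(r)\,dr\). This formula extends \(T_t^{(s)}\) to bounded functions as a positivity-preserving sub-Markovian operator. Then
\[
T_t^{(s)}\mathbf 1(x)=\int_0^\infty \Theta(r,x)\,\eta_t^{(s)}(r)\,dr .
\]
Set \(u(t,x):=1-T_t^{(s)}\mathbf 1(x)=\int_0^\infty (1-\Theta(r,x))\eta_t^{(s)}(r)\,dr\ge0\). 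The semigroup identity is equivalent to \(u(t)=\int_0^t T_\tau^{(s)}\mathcal R_s\,d\tau\). The resolvent identity will follow by Laplace transform in \(t\). Indeed, \(\alpha\int_0^\infty e^{-\alpha t}T_t^{(s)}\mathbf 1\,dt=\alpha R^{(s)}_\alpha\mathbf 1\), and \(\alpha\int_0^\infty e^{-\alpha t}\int_0^tT_\tau^{(s)}\mathcal R_s\,d\tau\,dt=R_\alpha^{(s)}\mathcal R_s\) by Fubini, which is justified since everything is nonnegative.

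The key structural input is that \(1-\Theta\) solves the heat equation with zero initial data in the ``defect'' sense. For \(r,\rho>0\) the semigroup property gives \(1-\Theta(r+\rho)=(1-\Theta(\rho))+P_\rho(1-\Theta(r))\). In other words, \(\phi:=1-\Theta\) satisfies the cocycle identity \(\phi(r+\rho)=\phi(\rho)+P_\rho\phi(r)\). Its fractional analogue for \(u\) is \(u(t+\tau)=u(\tau)+T_\tau^{(s)}u(t)\). This follows directly from \(T^{(s)}_{t+\tau}\mathbf 1=T^{(s)}_\tau T^{(s)}_t\mathbf 1\), together with the subordinated Chapman–Kolmogorov relation \(\eta_{t+\tau}=\eta_t*\eta_\tau\). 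Differentiating the fractional cocycle in \(t\) at \(t=0^+\) gives
\[
\partial_\tau u(\tau)=T_\tau^{(s)}\Bigl(\lim_{t\downarrow0}\tfrac{u(t)}{t}\Bigr).
\]
Since \(u(0)=0\), integrating yields the desired identity, provided that
\[
\lim_{t\downarrow0}\frac{u(t,x)}{t}=\mathcal R_s(x),
\]
with enough monotone control to pass the limit through \(T_\tau^{(s)}\).

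The main step is therefore the pointwise derivative \(\frac{d}{dt}\big|_{0^+}\int_0^\infty\phi(r,x)\eta_t^{(s)}(r)\,dr\). We use the standard fact that \(\eta_t^{(s)}(r)/t\uparrow \frac{s}{\Gamma(1-s)}r^{-1-s}\) as \(t\downarrow0\), in the sense of vague convergence of the Lévy measure. We also use that \(\phi(\cdot,x)\) is nondecreasing, bounded by \(1\), and satisfies \(\phi(r,x)=O(r^N)\) as \(r\to0\). The last bound holds because the heat kernel mass defect is exponentially small for short times, by the locality of \(P_r\) near \(x\) on a complete manifold. Then \(\frac1t\int\phi\,\eta_t\,dr\to\frac{s}{\Gamma(1-s)}\int_0^\infty\phi(r,x)r^{-1-s}\,dr=\mathcal R_s(x)\). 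Cleanly, for \(\psi\) bounded with \(\psi(r)=O(r)\) at \(0\), one can write \(\psi\) through its Bernstein-type representation, or approximate \(\phi\) by combinations of \(1-e^{-\lambda r}\). For these \((1-e^{-t\lambda^s})/t\to\lambda^s=\frac{s}{\Gamma(1-s)}\int(1-e^{-\lambda r})r^{-1-s}dr\) monotonically, so the limit follows. This interchange is the part I expect to be the obstacle.

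To avoid the delicate passage of the limit through \(T_\tau^{(s)}\), an alternative is to verify the identity after the Laplace transform directly. Fubini gives
\[
R_\alpha^{(s)}\mathcal R_s=\frac{s}{\Gamma(1-s)}\int_0^\infty r^{-1-s}R_\alpha^{(s)}\phi(r)\,dr .
\]
The cocycle \(\phi(r+\rho)-\phi(\rho)=P_\rho\phi(r)\), integrated against the resolvent measure of the subordinator, should then collapse to \(1-\alpha R^{(s)}_\alpha\mathbf 1\). Both sides are nonnegative, so every Fubini/Tonelli step is legitimate. The semigroup version then follows by Laplace inversion, using continuity in \(t\).
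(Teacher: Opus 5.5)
Your route is genuinely different from the paper's, but the proposal stops exactly at its crux. The paper pairs with $\varphi\in C_c^\infty(M)$ and differentiates $\int_M T_t^{(s)}\varphi\,dV_g$ through the $L^1$-generator. It then uses Lemma~\ref{lem:generator-pairing-Rs} (in effect $\int_M(-\Delta)^s_1u\,dV_g=\int_M\mathcal R_s u\,dV_g$) to get $F_\varphi'\equiv0$, and passes from the distributional to the pointwise identity by continuity.

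Your cocycle $u(t+\tau)=u(\tau)+T_\tau^{(s)}u(t)$ and your Laplace-transform bookkeeping are correct. What is missing is the interchange of $h\downarrow0$ with $T_\tau^{(s)}$, and the devices you offer for it do not work:
\begin{itemize}
\item The defect $\phi(\cdot,x)=1-\Theta(\cdot,x)$ has no Bernstein representation. If $f$ is Bernstein with $f(0)=0$, then $f'$ is nonincreasing, so $f'(r)\le f(\rho)/\rho$ for $\rho<r$; hence $f(\rho)=o(\rho)$ forces $f\equiv0$. Approximating $\phi$ by signed combinations of $1-e^{-\lambda r}$ destroys the monotonicity you want.
\item $\eta_t^{(s)}(r)/t$ is not monotone in $t$ in general. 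The classical series gives $r^{1+s}\eta_1^{(s)}(r)=\frac{s}{\Gamma(1-s)}-\frac{\Gamma(1+2s)\sin(2\pi s)}{2\pi}r^{-s}+O(r^{-2s})$, so for $s\in(\frac12,1)$ the limit is approached from above.
\item The Laplace-side alternative (``should then collapse'') is the resolvent identity of the subordinator applied to $\phi(\cdot,x)$, which you would still have to justify.
\item Differentiating the cocycle and then integrating presupposes absolute continuity of $u(\cdot,x)$, which is not available a priori.
\end{itemize}

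What closes the argument is domination combined with Fatou, not monotonicity. By \eqref{tailinte}, $0\le u(h,y)/h\le C\,\mathcal R_s(y)$ with $C=C(s)$. Exactly as in the proof of Proposition~\ref{prop:fractional-heat-kernel-small-time-offdiag}, $u(h,y)/h\to\mathcal R_s(y)$ for every $y$, using $\mathcal R_s(y)<\infty$. Now integrate the cocycle over $\tau\in(0,t)$ before letting $h\downarrow0$:
\[
\frac1h\int_t^{t+h}u(\sigma,x)\,d\sigma-\frac1h\int_0^h u(\sigma,x)\,d\sigma=\int_M G_t(x,y)\,\frac{u(h,y)}{h}\,dV_g(y),\qquad G_t(x,y):=\int_0^t p_\tau^{(s)}(x,y)\,d\tau .
\]
The left side tends to $u(t,x)$. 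This uses continuity of $\sigma\mapsto u(\sigma,x)$ on $(0,\infty)$, from Lemma~\ref{lem:Linf-time-continuity-subordinate}, and $u(h,x)\le Ch\,\mathcal R_s(x)\to0$. Fatou then gives $\int_0^tT_\tau^{(s)}\mathcal R_s(x)\,d\tau\le u(t,x)\le1$. Hence $C\,G_t(x,\cdot)\,\mathcal R_s$ is integrable, and dominated convergence upgrades the inequality to equality.

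Completed this way, your proof is pointwise and more elementary than the paper's: it needs no $L^1$-domain arguments and no distributional-to-pointwise upgrade. It also does not need $\mathcal R_s\in L^\infty(M)$, which the paper invokes in its Fubini step and in Remark~\ref{rem:HtNs-continuity}.
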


We first present two natural characterizations of stochastic completeness in terms of the fractional semigroup and the associated resolvent. From the subordination formula, these equivalences are quite natural. 

\begin{theorem}\label{prop:conservativeness-subordination}
Let \((M,g)\) be a complete Riemannian manifold, let \(0<s<1\). Then, for every \(t>0\), the following statements are equivalent:
\begin{enumerate}
\item[(i)] For every $t>0$,
\[P_t\mathbf 1=\mathbf 1.\]
\item[(ii)]For every $t>0$,
\[T_t^{(s)}\mathbf 1=\mathbf 1.\]

\item[(iii)] For every $t>0$ and $\varphi\in L^1(M),$
\[\int_M T_t^{(s)}\varphi\,dV_g
=
\int_M \varphi\,dV_g.\]
\end{enumerate}
\end{theorem}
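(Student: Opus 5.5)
The argument rests on Bochner's subordination formula
\[
T_t^{(s)}f=\int_0^\infty P_r f\,\eta_t^{(s)}(dr),
\]
where \(\eta_t^{(s)}\) is the one-sided \(s\)-stable probability law on \((0,\infty)\), characterized by \(\int_0^\infty e^{-\lambda r}\eta_t^{(s)}(dr)=e^{-t\lambda^s}\). For \(f\in L^2(M)\) this follows from the spectral theorem, since \(e^{-t\lambda^s}=\int e^{-r\lambda}\eta_t^{(s)}(dr)\). It extends to bounded nonnegative \(f\), in particular to \(\mathbf 1\), by monotone convergence along \(f_n=f\mathbf 1_{K_n}\uparrow f\) for an exhaustion \(K_n\). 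Thus, pointwise,
\[
T_t^{(s)}\mathbf 1(x)=\int_0^\infty \Theta(r,x)\,\eta_t^{(s)}(dr),
\]
and both \(T^{(s)}_t\) and \(P_r\) are positivity preserving and sub-Markovian. I will prove the implications (i)\(\Rightarrow\)(ii), (ii)\(\Rightarrow\)(i) and (ii)\(\Leftrightarrow\)(iii).

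\textbf{(i)\(\Rightarrow\)(ii).} If \(\Theta\equiv1\), the subordination formula gives \(T_t^{(s)}\mathbf 1=\int\eta_t^{(s)}(dr)=1\), since \(\eta_t^{(s)}\) is a probability measure.

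\textbf{(ii)\(\Rightarrow\)(i).} This is the main step. Assume \(T_t^{(s)}\mathbf 1(x)=1\) for some \(x\) and \(t\). Then
\[
\int_0^\infty (1-\Theta(r,x))\,\eta_t^{(s)}(dr)=0 .
\]
The integrand is nonnegative. Moreover \(\eta_t^{(s)}\) has a strictly positive density on all of \((0,\infty)\); I will cite the standard positivity of stable densities, or argue that its support is \([0,\infty)\) because the Laplace transform \(e^{-t\lambda^s}\) decays slower than any exponential. Hence \(\Theta(r,x)=1\) for a.e. \(r\). Since \(r\mapsto\Theta(r,x)\) is nonincreasing (by the semigroup property and \(P_r\mathbf 1\le1\)) and continuous, it follows that \(\Theta(r,x)=1\) for all \(r\), which is (i).

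As an alternative for this step, I could use Theorem \ref{thm:generalized identity} instead. Condition (ii) forces \(\int_0^t T_\tau^{(s)}\mathcal R_s\,d\tau=0\). Then \(\mathcal R_s=0\) a.e., by positivity of \(T_\tau^{(s)}\) and the strong continuity of the semigroup as \(\tau\to0\), tested against compactly supported functions. Finally \(\mathcal R_s(x)=0\) gives \(\Theta(\cdot,x)\equiv1\) directly from \eqref{remainder}.

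\textbf{(ii)\(\Leftrightarrow\)(iii).} This is duality. \(T_t^{(s)}\) is symmetric on \(L^2\) and extends to a contraction on \(L^1\) and on \(L^\infty\) with \((T_t^{(s)}|_{L^1})^*=T_t^{(s)}|_{L^\infty}\). This holds because the operator has a symmetric nonnegative kernel \(\int p(r,x,y)\,\eta_t^{(s)}(dr)\). For \(\varphi\in L^1\), Fubini and Tonelli applied to this kernel give
\[
\int_M T_t^{(s)}\varphi\,dV_g=\int_M \varphi\,T_t^{(s)}\mathbf 1\,dV_g .
\]
So (ii) implies (iii). Conversely, apply (iii) to \(\varphi\ge0\) in \(L^1\) with arbitrary support. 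This gives \(\int\varphi(1-T_t^{(s)}\mathbf 1)=0\), so \(T_t^{(s)}\mathbf 1=1\) a.e. The kernel representation and continuity of the heat kernel (or the pointwise subordination formula) upgrade this to everywhere.

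The main technical care lies in two places. First, justifying the pointwise subordination formula for \(\mathbf 1\notin L^2\) via monotone limits. Second, in (ii)\(\Rightarrow\)(i), the positivity of the stable density on \((0,\infty)\); I expect this second point to be the main obstacle. I would handle it by citing the known strict positivity of one-sided stable densities, for instance from the series representation or from the infinite divisibility of \(\eta_t^{(s)}\) with Lévy measure \(c\,r^{-1-s}dr\) of full support.
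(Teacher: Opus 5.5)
Your proposal is correct and follows essentially the paper's route: subordination, strict positivity of $\eta_t^{(s)}$ and the regularity of $\tau\mapsto P_\tau\mathbf 1$ give (i)$\Leftrightarrow$(ii), and the kernel-symmetry identity $\int_M \varphi\,T_t^{(s)}\mathbf 1\,dV_g=\int_M T_t^{(s)}\varphi\,dV_g$, upgraded from a.e.\ to everywhere by continuity, gives (ii)$\Leftrightarrow$(iii). One small caution: your fallback argument (that $e^{-t\lambda^s}$ decays slower than any exponential) only shows $0\in\operatorname{supp}\eta_t^{(s)}$, whereas your monotonicity step needs $\eta_t^{(s)}$ to charge $(R,\infty)$ for every $R$; the cited strict positivity supplies this, as does the tail $\eta_t^{(s)}(r)\sim \frac{ts}{\Gamma(1-s)}r^{-1-s}$ as $r\to\infty$.
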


The next characterization is formulated in terms of the resolvent.

\begin{theorem}\label{thm:fractional-conservative-equivalence}
Let \((M,g)\) be a complete Riemannian manifold, and let \(0<s<1\). Then the following assertions are equivalent.
\begin{enumerate}
\item[(i)] The subordinate semigroup \(\{T_t^{(s)}\}_{t\ge0}\) is conservative, that is,
\[
T_t^{(s)}\mathbf 1=\mathbf 1
\qquad\text{for all }t>0.
\]

\item[(ii)] For every \(\alpha>0\),
\[
R_\alpha^{(s)}\mathbf 1=\alpha^{-1},
\]
where
\[
R_\alpha^{(s)}:=\bigl((-\Delta)^s+\alpha\bigr)^{-1}.
\]

\item[(iii)] For every \(\alpha>0\), the function
\[
v_\alpha:=\mathbf 1-\alpha R_\alpha^{(s)}\mathbf 1
\]
vanishes identically on \(M\).
\end{enumerate}
\end{theorem}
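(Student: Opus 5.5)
The three conditions are tied together by the Laplace transform identity that links the resolvent to the semigroup. We first extend \(T_t^{(s)}\) and \(R_\alpha^{(s)}\) to bounded measurable functions; in particular both act on \(\mathbf 1\). This works because \(T_t^{(s)}\) is given by subordination,
\[
T_t^{(s)}f=\int_0^\infty P_r f\,\eta_t^{(s)}(dr),
\]
where \(\eta_t^{(s)}\) is the one-sided \(s\)-stable probability law. Hence \(T_t^{(s)}\) is positivity preserving and sub-Markovian. It is therefore given by a nonnegative kernel of total mass at most one, and extends by monotone limits to \(L^\infty(M)\). On this extension we use
\[
R_\alpha^{(s)}f=\int_0^\infty e^{-\alpha t}T_t^{(s)}f\,dt, \qquad 0\le f\le \mathbf 1 .
\]
This formula holds on \(L^2\cap L^\infty\) by the spectral theorem. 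It passes to \(f=\mathbf 1\) by monotone convergence along an exhaustion \(f_n=\mathbf 1_{K_n}\uparrow\mathbf 1\), since both sides are monotone in \(f\). In the same way \(T_t^{(s)}\mathbf 1=\lim_n T_t^{(s)}\mathbf 1_{K_n}\) pointwise.

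\textbf{(i)\(\Rightarrow\)(ii).} If \(T_t^{(s)}\mathbf 1=\mathbf 1\) for all \(t\), then
\[
R_\alpha^{(s)}\mathbf 1=\int_0^\infty e^{-\alpha t}\,dt=\alpha^{-1}.
\]

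\textbf{(ii)\(\Leftrightarrow\)(iii).} This is immediate from the definition of \(v_\alpha\).

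\textbf{(ii)\(\Rightarrow\)(i).} Put \(u(t,x):=1-T_t^{(s)}\mathbf 1(x)\in[0,1]\). Condition (ii) gives
\[
\int_0^\infty e^{-\alpha t}u(t,x)\,dt=0 \qquad\text{for a.e. }x .
\]
Since \(u\ge0\), this yields \(u(t,x)=0\) for a.e. \(t\), for a.e. \(x\), after a Fubini argument on \((0,\infty)\times M\). To conclude for every \(t\) we need regularity in \(t\), which we obtain from monotonicity. The semigroup property and sub-Markovianity give
\[
T_{t+h}^{(s)}\mathbf 1=T_t^{(s)}\bigl(T_h^{(s)}\mathbf 1\bigr)\le T_t^{(s)}\mathbf 1,
\]
so \(t\mapsto T_t^{(s)}\mathbf 1(x)\) is nonincreasing. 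Hence \(u(\cdot,x)\) is nondecreasing. If it vanishes for a.e. \(t\), it vanishes for every \(t>0\), because any \(t_0\) lies below a.e.-many points where \(u=0\). To make this pointwise in \(x\) we fix a countable dense set of times and use the \(L^\infty\)-a.e. framework. Alternatively, we use the smoothness of \(T_t^{(s)}\mathbf 1\) inherited from the heat kernel through subordination.

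As a shortcut, one could instead invoke Theorem~\ref{thm:generalized identity}. It gives
\[
v_\alpha=R_\alpha^{(s)}\mathcal R_s
\qquad\text{and}\qquad
1-T_t^{(s)}\mathbf 1=\int_0^t T_\tau^{(s)}\mathcal R_s\,d\tau .
\]
Both quantities then vanish exactly when \(\mathcal R_s\equiv0\). For \(R_\alpha^{(s)}\mathcal R_s\) this uses that \(R_\alpha^{(s)}\) does not annihilate a nonnegative nonzero function, which follows from strict positivity of the subordinated kernel since \(p>0\). However, the direct Laplace-transform argument is more elementary, and we follow it.

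\textbf{Main obstacle.} The hard part is justifying the action of \(T_t^{(s)}\) and \(R_\alpha^{(s)}\) on \(\mathbf 1\notin L^2\). This includes the Laplace transform identity on \(\mathbf 1\) and the monotonicity in \(t\). We would handle it entirely through the monotone approximation \(\mathbf 1_{K_n}\uparrow\mathbf 1\) together with the positivity of the subordinated kernel, as described in the first step.
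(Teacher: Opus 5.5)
Your proposal is correct and follows essentially the paper's proof. You prove (i)$\Rightarrow$(ii) from the representation $R_\alpha^{(s)}\mathbf 1=\int_0^\infty e^{-\alpha t}T_t^{(s)}\mathbf 1\,dt$, and the converse from nonnegativity of $1-T_t^{(s)}\mathbf 1$, which forces $T_t^{(s)}\mathbf 1(x)=1$ for a.e.\ $t$; the only difference is that you pass from a.e.\ $t$ to every $t$ via the monotonicity $T_{t+h}^{(s)}\mathbf 1=T_t^{(s)}(T_h^{(s)}\mathbf 1)\le T_t^{(s)}\mathbf 1$, whereas the paper uses continuity of $t\mapsto T_t^{(s)}\mathbf 1(x)$ (Lemma~\ref{lem:Linf-time-continuity-subordinate}), and both work.
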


By Lemma \ref{lem:frac-lap-smooth-compact-support} and Lemma \ref{lem:fraclap-test-L1}, we know that  for every \(\varphi\in C_c^\infty(M)\),
\[
(-\Delta)^s\varphi\in C^\infty(M)\cap L^1(M).
\]

We then prove our main nonlocal characterization.

\begin{theorem}\label{thm:conservative-characterization-zero-mean}
Let \((M,g)\) be a complete Riemannian manifold, and let \(0<s<1\). Then the following assertions are equivalent:

\medskip
\noindent\textnormal{(i)} The subordinate semigroup \(\{T_t^{(s)}\}_{t\ge0}\) is conservative, namely
\[
T_t^{(s)}\mathbf 1=\mathbf 1
\qquad\text{for all }t>0.
\]

\medskip
\noindent\textnormal{(ii)} For every \(\varphi\in C_c^\infty(M)\),
\[
\int_M (-\Delta)^s\varphi\,dV_g=0.
\]
\end{theorem}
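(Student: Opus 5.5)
The plan is to compute $\int_M(-\Delta)^s\varphi\,dV_g$ explicitly through the decomposition \eqref{two equivalent}. The target formula is
\[
\int_M (-\Delta)^s\varphi\,dV_g=\int_M \mathcal R_s\,\varphi\,dV_g \qquad\forall\,\varphi\in C_c^\infty(M),
\]
after which both implications reduce to whether $\mathcal R_s\equiv 0$.

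To obtain this identity, I use the Bochner representation. Since $\varphi\in L^1(M)$ and $(-\Delta)^s\varphi\in L^1(M)$ by Lemma \ref{lem:fraclap-test-L1}, I will pair $(-\Delta)^s\varphi$ with $\mathbf 1$ through truncations. Formally,
\[
\int_M (\varphi-e^{t\Delta}\varphi)\,dV_g=\int_M \varphi\,(1-\Theta(t,\cdot))\,dV_g ,
\]
which follows from the symmetry of $p$ and Fubini, since $\int_M e^{t\Delta}\varphi\,dV_g=\int_M\varphi\,\Theta(t,\cdot)\,dV_g$. Integrating this against $\frac{s}{\Gamma(1-s)}t^{-1-s}\,dt$ gives $\int_M\varphi\,\mathcal R_s\,dV_g$. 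Exchanging the $t$-integral with the $M$-integral needs a justification, and this is the main technical obstacle. Near $t=0$, the Bochner integral converges only in $L^2$, not in $L^1$.

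I plan to handle this step as follows. First, work with $\varphi\ge0$; a general $\varphi$ is split as $\varphi=\varphi\chi-(\chi\|\varphi\|_\infty-\varphi)$ with a cutoff $\chi\ge0$ in $C_c^\infty$, giving a difference of nonnegative test functions. For nonnegative $\varphi$ I would rather not use the $L^2$ integral, and instead use the pointwise formula \eqref{two equivalent} directly:
\[
(-\Delta)^s\varphi=(-\Delta)^s_{\mathrm{hk}}\varphi+\mathcal R_s\varphi .
\]
Then $\int_M \mathcal R_s\varphi\,dV_g<\infty$, since $\mathcal R_s$ is locally bounded (it is bounded by the contribution near $t=0$, where $1-\Theta(t,x)$ is small locally uniformly, plus $\int_1^\infty t^{-1-s}\,dt$). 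It therefore suffices to show
\[
\int_M (-\Delta)^s_{\mathrm{hk}}\varphi\,dV_g=0 .
\]
Because $K_s$ is symmetric, the double integral $\iint(\varphi(x)-\varphi(y))K_s(x,y)\,dV_g(x)\,dV_g(y)$ vanishes by antisymmetry, provided it converges absolutely. Absolute convergence is exactly the delicate point, because of the singularity on the diagonal. I would handle it with the principal-value truncation $K_s^\varepsilon$, obtained by restricting to $t>\varepsilon$ in \eqref{frackernel}. For this kernel the double integral converges absolutely, since $\int_M K_s^\varepsilon(x,y)\,dV_g(y)\le C\varepsilon^{-s}$, so its integral is $0$. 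The claim then follows by letting $\varepsilon\to0$, using dominated convergence in $L^1(M)$. The required domination comes from the local estimates behind Lemmas \ref{lem:frac-lap-smooth-compact-support} and \ref{lem:fraclap-test-L1}: second-order Taylor expansion near the support, together with Gaussian upper bounds for small $t$ on a compact neighbourhood, and the bound $|\varphi|K_s$ away from the support.

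With the identity in hand, the two directions follow. For (i)$\Rightarrow$(ii): by Theorem \ref{prop:conservativeness-subordination}, conservativeness of $T^{(s)}_t$ gives $P_t\mathbf 1=\mathbf 1$, so $\Theta\equiv1$ and $\mathcal R_s\equiv0$, and the integral vanishes. For (ii)$\Rightarrow$(i): $\int_M\mathcal R_s\varphi\,dV_g=0$ for all $\varphi\in C_c^\infty(M)$, and $\mathcal R_s\ge0$ is continuous (or at least locally integrable), so $\mathcal R_s\equiv0$. This forces $1-\Theta(t,x)=0$ for a.e. $t$, and hence for all $t$ by continuity. So $P_t\mathbf 1=\mathbf 1$, and Theorem \ref{prop:conservativeness-subordination} yields (i). Alternatively, Theorem \ref{thm:generalized identity} with $\mathcal R_s=0$ gives $T_t^{(s)}\mathbf 1=1$ directly.
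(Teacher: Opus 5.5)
Your proposal is correct, but it is organized differently from the paper's proof. You first establish $\int_M(-\Delta)^s\varphi\,dV_g=\int_M\mathcal R_s\varphi\,dV_g$ for all $\varphi\in C_c^\infty(M)$, and then read off both directions from $\mathcal R_s\equiv0\iff P_t\mathbf 1\equiv\mathbf 1$. The paper proves this identity only afterwards, as Corollary~\ref{cor:int-fraclap-Rs} via Lemma~\ref{lem:hk-zero-mean}.

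The paper's proof of the theorem runs differently in each direction:
\begin{itemize}
\item For (i)$\Rightarrow$(ii) it needs no interchange of integrals. By Theorem~\ref{prop:conservativeness-subordination} the mass $t\mapsto\int_M T_t^{(s)}\varphi\,dV_g$ is constant. Differentiating at $t=0$ in $L^1(M)$, using that $\varphi$ lies in the domain of the $L^1$-generator, gives zero.
\item For (ii)$\Rightarrow$(i) it argues by contradiction with a single $\varphi\ge0$ supported where $P_{t_0}\mathbf 1<1$. It writes $\int_M(-\Delta)^s\varphi\,dV_g=\frac{s}{\Gamma(1-s)}\int_0^\infty\int_M\varphi\,(1-P_t\mathbf 1)\,dV_g\,t^{-1-s}\,dt>0$. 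This is your identity on one test function, with the exchange of $\int_M$ and $\int_0^\infty$ simply asserted by Fubini.
\end{itemize}
Your route gives a formula valid for all test functions and treats that exchange carefully. The time truncation $K_s^\varepsilon$ is the right device here. For $s\ge\frac12$, the integral $\int_M|\varphi(x)-\varphi(y)|K_s(x,y)\,dV_g(y)$ genuinely diverges wherever $\nabla\varphi(x)\neq0$, since $K_s\gtrsim d_g^{-n-2s}$ near the diagonal. So antisymmetry can only be used at the truncated level. The paper's (i)$\Rightarrow$(ii) gains brevity by delegating to $L^1$ semigroup theory.

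\textbf{Corrections.}
\begin{itemize}
\item Your splitting formula is off. With $\chi\equiv1$ on $\operatorname{supp}\varphi$ it should read $\varphi=\|\varphi\|_\infty\chi-(\|\varphi\|_\infty\chi-\varphi)$. Positivity is never used later anyway.
\item The local domination should not rest on Taylor expansion plus Gaussian upper bounds. These only bound the first-order term by $O(\sqrt t)$, which is integrable against $t^{-1-s}$ only for $s<\frac12$.
\item Instead, split $I_t=(\varphi-P_t\varphi)-(1-\Theta(t,\cdot))\varphi$ and bound each piece by $O(t)$:
  \begin{itemize}
  \item $|\varphi-P_t\varphi|\le t\|\Delta\varphi\|_{L^\infty}$, from $\varphi-P_t\varphi=-\int_0^tP_\tau\Delta\varphi\,d\tau$;
  \item $0\le 1-\Theta(t,x)\le \chi(x)-P_t\chi(x)\le t\|\Delta\chi\|_{L^\infty}$ wherever $\chi=1$.
  \end{itemize}
  The second bound is also what makes ``$\mathcal R_s$ locally bounded'' precise; mere smallness of $1-\Theta$ near $t=0$ would not suffice.
\end{itemize}

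\textbf{Simplification.} With these bounds, plus Proposition~\ref{prop:heat-tail-integral} off the support as in Lemma~\ref{lem:fraclap-test-L1}, the function $(x,t)\mapsto(\varphi-P_t\varphi)(x)\,t^{-1-s}$ already lies in $L^1(M\times(0,\infty))$. So you could skip the detour through $(-\Delta)^s_{\mathrm{hk}}$ and apply Fubini directly to the pointwise Bochner formula.
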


A remarkable feature of Theorem~\ref{thm:conservative-characterization-zero-mean}
is that it is genuinely nonlocal and has no analogue for the local Laplacian.
Indeed, for \(0<s<1\), the operator \((-\Delta)^s\) produces a nontrivial tail at infinity even when applied to compactly supported test functions, so that the above integral
captures global information about the geometry and the long-range behavior of the process.

Moreover, Corollary~\ref{cor:int-fraclap-Rs} shows that
\[
\int_M (-\Delta)^s\varphi\,dV_g
=
\int_M \mathcal R_s(x)\,\varphi(x)\,dV_g(x),
\]
where \(\mathcal R_s\) is the intrinsic killing term associated with the loss of mass of the heat semigroup. This identity makes the meaning of the theorem particularly transparent: the zero-mean condition in \textnormal{(ii)} exactly detects whether this mass-defect term vanishes, and therefore whether the subordinate semigroup is conservative.

Next, we recall the notion of a core for the generator of a semigroup and prove a classical equivalence between stochastic completeness and the \(L^1\)-core property of the heat semigroup generator. This result will play a crucial role in the proof of our final characterization theorem.

\begin{definition}\cite[Section 6.1]{davies2007linear}
Let \(X\) be a Banach space, let \(\{T_t\}_{t\ge0}\) be a strongly continuous semigroup on \(X\), and let \(A\) denote its generator with domain \(\operatorname{Dom}(A)\subset X\). A linear subspace
\[
D\subset \operatorname{Dom}(A)
\]
is called a \emph{core} for \(A\) if \(D\) is dense in \(\operatorname{Dom}(A)\) with respect to the graph norm
\[
\|f\|_{A}:=\|f\|_X+\|Af\|_X.
\]
\end{definition}

For the heat semigroup on a complete Riemannian manifold, the \(L^2\) and \(L^1\) situations are markedly different. In \(L^2(M)\), the situation is classical:  the operator \(-\Delta\), initially defined on \(C_c^\infty(M)\), is essentially self-adjoint. Thus, \(C_c^\infty(M)\) is a core for the \(L^2\)-realization of \(\Delta\), see \cite[Theorem 4.3]{strichartz1983analysis}, independently of whether \((M,g)\) is stochastically complete.

In \(L^1(M)\), the behavior is completely different. The generator \(\Delta\) of the heat semigroup on \(L^1(M)\) need not have \(C_c^\infty(M)\) as a core automatically. In fact, this turns out to be equivalent to stochastic completeness of the manifold. Thus, unlike the \(L^2\)-theory, the \(L^1\)-core property detects whether heat mass escapes at infinity.

Let \(\Delta_1\) and \(\Delta\) denote the generators of the heat semigroup on
\(L^1(M)\) and \(L^2(M)\), respectively. Since the semigroup actions coincide on
\(L^1(M)\cap L^2(M)\), the two generators are compatible in the sense that
\[
\Delta_1 f=\Delta f
\qquad\text{for every }f\in \operatorname{Dom}(\Delta_1)\cap \operatorname{Dom}(\Delta).
\]
Accordingly, when no confusion arises, we shall simply write \(\Delta\) for these realizations.

\begin{theorem}\label{thm:L1-core-stochastic-completeness}
Let \((M,g)\) be a complete Riemannian manifold, and let \(\Delta\) denote the generator of the heat semigroup on \(L^1(M)\). Then
\[
C_c^\infty(M)\ \text{is a core for }\Delta
\]
if and only if \((M,g)\) is stochastically complete.
\end{theorem}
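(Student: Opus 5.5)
We prove the two implications separately. Throughout, \(\Delta\) denotes the generator of the \(L^1\)-heat semigroup \(\{P_t\}\). It is a strongly continuous contraction semigroup on \(L^1(M)\), and its dual is the \(L^\infty\)-heat semigroup. The key tool is the Hahn--Banach criterion for cores: a subspace \(D\subset\operatorname{Dom}(\Delta)\) is a core if and only if \((\Delta-\alpha)D\) is dense in \(L^1(M)\) for some (equivalently every) \(\alpha>0\). This holds because \(\alpha\) lies in the resolvent set, so \(\Delta-\alpha\) is an isomorphism from \((\operatorname{Dom}(\Delta),\|\cdot\|_\Delta)\) onto \(L^1(M)\).

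\emph{Preliminary step.} We check that \(C_c^\infty(M)\subset\operatorname{Dom}(\Delta_1)\) and that the generator acts there as the classical Laplacian. For \(\varphi\in C_c^\infty(M)\) we have
\[
P_t\varphi-\varphi=\int_0^t P_\tau\Delta\varphi\,d\tau,
\]
first as an identity in \(L^2(M)\), since \(C_c^\infty(M)\subset\operatorname{Dom}(\Delta)\). Because \(\Delta\varphi\in L^1(M)\) and \(\{P_\tau\}\) is strongly continuous on \(L^1(M)\), the right-hand side is an \(L^1\)-Bochner integral. Dividing by \(t\) and letting \(t\to0\) gives the claim in \(L^1(M)\).

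\emph{Necessity.} Assume \(C_c^\infty(M)\) is a core. Fix \(\alpha>0\) and let \(u\in L^\infty(M)\) satisfy
\[
\int_M u\,(\Delta-\alpha)\varphi\,dV_g=0\qquad\text{for all }\varphi\in C_c^\infty(M).
\]
Then \(u\) is a bounded distributional solution of \(\Delta u=\alpha u\), hence smooth by elliptic regularity. By the core property, \((\Delta-\alpha)C_c^\infty(M)\) is dense in \(L^1(M)\), so \(u=0\). Now take a bounded nonnegative solution \(v\) of \(\Delta v=\alpha v\) in \(M\). It annihilates \((\Delta-\alpha)C_c^\infty(M)\) by integration by parts, so \(v=0\). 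Theorem~\ref{thm:classical-stochastic-completeness}(ii) then gives stochastic completeness. (The explicit witness in the incomplete case is \(v=1-\alpha G_\alpha\mathbf 1\), where \(G_\alpha\) is the resolvent.)

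\emph{Sufficiency.} Assume \(M\) is stochastically complete. By Hahn--Banach it suffices to show that every \(u\in L^\infty(M)\) annihilating \((\Delta-\alpha)C_c^\infty(M)\) vanishes. Such a \(u\) is a smooth bounded solution of \(\Delta u=\alpha u\), but it need not have a sign, so we cannot apply Theorem~\ref{thm:classical-stochastic-completeness}(ii) to \(u\) directly. Instead we use Kato's inequality: \(\Delta|u|\ge \operatorname{sgn}(u)\Delta u=\alpha|u|\) in the distributional sense. Hence \(w=|u|\) is a bounded nonnegative subsolution, \(\Delta w\ge\alpha w\).

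We then show that a bounded nonnegative subsolution must vanish on a stochastically complete manifold. Set \(m=\sup w\). By the maximum principle on an exhaustion \(\Omega_k\uparrow M\), we have \(w\le m\,(1-\alpha G^{\Omega_k}_\alpha\mathbf 1)\) on \(\Omega_k\), where \(G^{\Omega_k}_\alpha\) is the Dirichlet resolvent of \(\Omega_k\): the right-hand side solves \(\Delta h=\alpha h\) in \(\Omega_k\) with boundary value \(m\ge w\). Letting \(k\to\infty\), and using that the Dirichlet resolvents increase to the minimal one, we get
\[
w\le m\,(1-\alpha G_\alpha\mathbf 1)=m\,\alpha\int_0^\infty e^{-\alpha t}(1-P_t\mathbf 1)\,dt=0
\]
by stochastic completeness. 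Therefore \(u=0\), and \(C_c^\infty(M)\) is a core.

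\emph{Main obstacle.} The delicate step is this sufficiency argument: justifying Kato's inequality for distributional solutions, which is fine here because \(u\) is smooth, and making the maximum-principle comparison on exhaustions rigorous for a subsolution that is only Lipschitz. If Kato causes trouble, the alternative is a duality route. The annihilation condition says \(u\in\operatorname{Dom}(\Delta^*)\) with \(\Delta^*u=\alpha u\) for the maximal \(L^\infty\)-realization. One then uses that, under conservativeness, this maximal realization coincides with the weak-\(*\) generator of \(P_t\) on \(L^\infty\), for which \(\alpha\) is in the resolvent set. Proving that coincidence is itself essentially the same Liouville statement, so the Kato/comparison step remains the heart of the proof.
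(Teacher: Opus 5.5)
Your proof is correct and has the same skeleton as the paper's. Both use the range-density criterion for cores (the paper takes it from Ethier--Kurtz; you derive it from the resolvent isomorphism), Hahn--Banach to produce a bounded annihilator of $(\Delta-\alpha)C_c^\infty(M)$, elliptic regularity, and Grigor'yan's Liouville characterization. Your necessity direction is essentially the paper's, verbatim.

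The difference is in the sufficiency direction. The paper simply declares that a bounded smooth solution of $\Delta u=\lambda u$ ``contradicts'' Grigor'yan's theorem. But the version quoted in Theorem~\ref{thm:classical-stochastic-completeness}(ii) only excludes \emph{nonnegative} solutions, and the Hahn--Banach annihilator has no sign. Your Kato-plus-exhaustion argument supplies exactly this missing step. Likewise, your preliminary check that $C_c^\infty(M)\subset\operatorname{Dom}(\Delta_1)$, with the generator acting classically, is something the paper uses without comment.

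You can drop Kato's inequality, and with it the Lipschitz issue you flag as the main obstacle. Take $m=\|u\|_{L^\infty(M)}$ and $h_k=m(1-\alpha G^{\Omega_k}_\alpha\mathbf 1)$. The functions $\pm u-h_k$ are smooth solutions of $\Delta f=\alpha f$ in $\Omega_k$, with $(\pm u-h_k)^+\le m\alpha G^{\Omega_k}_\alpha\mathbf 1\in W^{1,2}_0(\Omega_k)$. The weak maximum principle for $\Delta-\alpha$ needs no sign on the solution: testing with $(\pm u-h_k)^+$ gives $|u|\le h_k$ directly. Letting $k\to\infty$ then yields $|u|\le m(1-\alpha G_\alpha\mathbf 1)=0$.
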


As an application, we prove that stochastic completeness is equivalent to the uniqueness of bounded distributional solutions to a fractional elliptic equation. This is markedly different from the classical local case. Indeed, for the Laplace--Beltrami operator, one considers the homogeneous equation, reflecting the fact that formally \(\Delta 1=0\). In the fractional setting, by contrast, the natural equation is
\[
(-\Delta)^s v+\alpha v=\mathcal R_s,
\]
which is consistent with the formal identity \((-\Delta)^s 1=\mathcal R_s\). Another major difference is that, in the local case, uniqueness is often established by pointwise arguments, whereas in the nonlocal setting it is more natural to work with distributional solutions. We therefore begin by introducing the notion of bounded distributional solution for the fractional elliptic equation..

\begin{definition}
Let \(0<s<1\), \(\alpha>0\), and let \(f\in L^1_{\mathrm{loc}}(M)\). A function \(u\in L^\infty(M)\) is called a distributional solution of
\[
(-\Delta)^s u+\alpha u=f
\qquad\text{in }M,
\]
if
\[
\int_M u\,(-\Delta)^s\varphi\,dV_g
+\alpha\int_M u\varphi\,dV_g
=
\int_M f\,\varphi\,dV_g
\qquad\forall\,\varphi\in C_c^\infty(M).
\]
\end{definition}

The above identity is meaningful. Indeed, since \(u\in L^\infty(M)\) and \(\varphi\in C_c^\infty(M)\), we have \(u\varphi\in L^1(M)\); since \(f\in L^1_{\mathrm{loc}}(M)\), it follows that \(f\varphi\in L^1(M)\); and, by Lemma~\ref{lem:fraclap-test-L1}, \((-\Delta)^s\varphi\in L^1(M)\), so that \(u(-\Delta)^s\varphi\in L^1(M)\) as well. 

\begin{theorem}\label{prop:unique-trivial-solution-equivalence}
Let \((M,g)\) be a complete Riemannian manifold, let \(0<s<1\), and let \(\alpha>0\). Then the following statements are equivalent:
\begin{enumerate}
\item[(i)] \(M\) is stochastically complete.

\item[(ii)] The only bounded distributional solution \(v\in L^\infty(M)\) of
\[
(-\Delta)^s v+\alpha v=\mathcal R_s
\qquad\text{in }M
\]
is the trivial one, namely $v=0
\quad\text{a.e. on }M.$
\end{enumerate}
\end{theorem}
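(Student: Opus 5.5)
The plan is to prove the two implications separately. The direction (ii)\(\Rightarrow\)(i) is easy. The direction (i)\(\Rightarrow\)(ii) is a duality argument that uses the \(L^1\)-core property of Theorem~\ref{thm:L1-core-stochastic-completeness}.

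\emph{(ii)\(\Rightarrow\)(i).} We argue by contraposition and use the compensated identity of Theorem~\ref{thm:generalized identity}. Put \(v:=R_\alpha^{(s)}\mathcal R_s=1-\alpha R_\alpha^{(s)}\mathbf 1\), so that \(0\le v\le 1\) and \(v\in L^\infty(M)\). We check that \(v\) is a distributional solution. For \(\varphi\in C_c^\infty(M)\), self-adjointness and sub-Markovianity of \(R_\alpha^{(s)}\), extended by duality to \(L^\infty\times L^1\), give
\[
\int_M v\bigl((-\Delta)^s+\alpha\bigr)\varphi\,dV_g=\int_M \mathcal R_s\,R_\alpha^{(s)}\bigl((-\Delta)^s+\alpha\bigr)\varphi\,dV_g=\int_M\mathcal R_s\varphi\,dV_g .
\]
Here \(\mathcal R_s\) is bounded, since \(1-\Theta(t,x)\le 1\) controls the integral at infinity and \(1-\Theta(t,x)=O(t)\)-type smallness locally is not needed because we may instead use \(\mathbf 1-\alpha R_\alpha^{(s)}\mathbf 1\) directly. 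Alternatively we write \(\int v(\ldots)=\int\varphi-\alpha\int R_\alpha^{(s)}\mathbf 1\,((-\Delta)^s+\alpha)\varphi\) and use the identity \(\int_M(-\Delta)^s\varphi=\int_M\mathcal R_s\varphi\) from Corollary~\ref{cor:int-fraclap-Rs}. If \(M\) is not stochastically complete, then \(T^{(s)}_t\mathbf 1\not\equiv 1\) by Theorem~\ref{prop:conservativeness-subordination}. Hence \(v\not\equiv0\) by Theorem~\ref{thm:fractional-conservative-equivalence}, which contradicts (ii).

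\emph{(i)\(\Rightarrow\)(ii).} Under stochastic completeness, \(\Theta\equiv1\), so \(\mathcal R_s\equiv 0\). We must show that a bounded \(v\) with \(\int_M v\,((-\Delta)^s+\alpha)\varphi=0\) for all \(\varphi\in C_c^\infty(M)\) vanishes. It suffices to show that \(\{((-\Delta)^s+\alpha)\varphi:\varphi\in C_c^\infty\}\) is dense in \(L^1(M)\); then \(v\), viewed as an element of \((L^1)^*\), is zero. Density follows once \(C_c^\infty(M)\) is a core for the \(L^1\)-generator of \(T_t^{(s)}\). Indeed, \((-\Delta)^s_1+\alpha\) maps its domain onto \(L^1\) (Hille--Yosida, since \(T^{(s)}\) is an \(L^1\)-contraction semigroup), and graph-norm density transfers to density of the range.

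\textbf{Main obstacle: the core transfer.} Theorem~\ref{thm:L1-core-stochastic-completeness} states that \(C_c^\infty\) is a core for \(\Delta_1\), and we must pass this to the fractional power. For \(f\in\operatorname{Dom}(\Delta_1)\), the Balakrishnan formula \((-\Delta_1)^sf=\frac{s}{\Gamma(1-s)}\int_0^\infty (f-P_tf)t^{-1-s}\,dt\) gives the moment-type bound
\[
\|(-\Delta_1)^sf\|_1\le C\bigl(\|f\|_1+\|\Delta_1 f\|_1\bigr),
\]
using \(\|f-P_tf\|_1\le t\|\Delta_1f\|_1\) for small \(t\) and \(\|f-P_tf\|_1\le 2\|f\|_1\) for large \(t\). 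Hence \(\operatorname{Dom}(\Delta_1)\) embeds continuously into \(\operatorname{Dom}((-\Delta_1)^s)\), and \(C_c^\infty\) is graph-dense in \(\operatorname{Dom}(\Delta_1)\). In addition, \(\operatorname{Dom}(\Delta_1)\) is a core for \((-\Delta_1)^s\): it is dense and invariant under \(T_t^{(s)}\) (standard core theorem, Davies), because \(T^{(s)}_t\) commutes with \(P_\tau\). Composing these gives the core property. Finally, we must check that the \(L^1\)-fractional Laplacian agrees with the pointwise \((-\Delta)^s\varphi\) of Lemma~\ref{lem:fraclap-test-L1}, by consistency on \(L^1\cap L^2\). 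This justifies the distributional pairing.
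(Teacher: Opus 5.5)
Your proposal follows the paper's proof. For (i)$\Rightarrow$(ii) you transfer the $L^1$-core property of Theorem~\ref{thm:L1-core-stochastic-completeness} to the $L^1$-generator of $T_t^{(s)}$ and conclude that $\{((-\Delta)^s+\alpha)\varphi:\varphi\in C_c^\infty(M)\}$ is dense in $L^1(M)$; where the paper simply cites Schilling--Song--Vondra\v{c}ek for the transfer, you argue it via the Balakrishnan bound and the invariant-core theorem. For (ii)$\Rightarrow$(i), your fallback computation with $v=\mathbf 1-\alpha R_\alpha^{(s)}\mathbf 1$, the symmetry of $R_\alpha^{(s)}$ and Corollary~\ref{cor:int-fraclap-Rs} is exactly the paper's argument through Proposition~\ref{solutionrs}.

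Rely on that fallback rather than on the claim $\mathcal R_s\in L^\infty(M)$, which you do not justify. The bound $1-\Theta\le 1$ only handles large $t$, and the small-$t$ part is not controlled uniformly in $x$; in fact $\mathcal R_s$ can be unbounded at infinity on stochastically incomplete manifolds.
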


\begin{remark}
Theorem ~\ref{prop:unique-trivial-solution-equivalence} should be compared with
the classical characterization of stochastic completeness for the Laplace--Beltrami
operator: in the local case, stochastic completeness is equivalent to the fact that
the equation
\[
-\Delta v+\alpha v=0
\qquad\text{in }M
\]
admits no nontrivial bounded solution. In the fractional case, however, the correct
analogue is no longer the homogeneous equation
\[
(-\Delta)^s v+\alpha v=0,
\]
but the compensated equation
\[
(-\Delta)^s v+\alpha v=\mathcal R_s.
\]
The reason is that the zeroth-order term \(\mathcal R_s\) precisely records the loss
of mass of the heat semigroup, and therefore plays the role of the correction term
needed to formulate the appropriate nonlocal counterpart of stochastic completeness.
\end{remark}

Finally, we establish our main characterization in terms of the associated elliptic and parabolic equations. Since, on a complete Riemannian manifold, the fractional Laplacian generally does not admit a convenient pointwise definition, it is more natural to treat it through functional calculus or, equivalently, through the generator of the subordinate semigroup. For this reason, we shall work throughout in the distributional setting. We begin by introducing the notions of bounded distributional solutions to the corresponding parabolic problems.

\begin{definition}
Let \(u_0\in L^\infty(M)\) and \(f\in L^\infty_{\mathrm{loc}}(M\times(0,\infty))\).
A function
\[
u\in L^\infty_{\mathrm{loc}}(M\times[0,\infty))
\]
is called a distributional solution of the Cauchy problem
\[
\partial_t u+(-\Delta)^s u=f
\qquad\text{in }M\times(0,\infty),
\]
with initial datum
\[
u(\cdot,0)=u_0,
\]
if for every test function
\[
\Phi\in C_c^\infty(M\times[0,\infty))
\]
one has
\[
\begin{aligned}
&-\int_0^\infty\!\!\int_M
u(x,t)\,\partial_t\Phi(x,t)\,dV_g(x)\,dt
+\int_0^\infty\!\!\int_M
u(x,t)\,(-\Delta)^s\Phi(\cdot,t)(x)\,dV_g(x)\,dt \\
&\qquad
=\int_M u_0(x)\Phi(x,0)\,dV_g(x)
+\int_0^\infty\!\!\int_M
f(x,t)\Phi(x,t)\,dV_g(x)\,dt.
\end{aligned}
\]
\end{definition}

We can now state our final characterization theorem, which shows that the conservativeness of the subordinate fractional heat semigroup is equivalent to the uniqueness of bounded distributional solutions to both the associated elliptic equation and the fractional heat equation.

\begin{theorem}\label{maintheorem}
Let $(M,g)$ be a complete Riemannian manifold, let $0<s<1$, and let $\alpha>0$. Then the following assertions are equivalent:

\medskip
\noindent\textnormal{(i)} The subordinate fractional heat semigroup $\{T_t^{(s)}\}_{t\ge0}$ is conservative, namely
\[
T_t^{(s)}\mathbf 1=\mathbf 1
\qquad\text{for every }t>0.
\]

\medskip
\noindent\textnormal{(ii)} The only bounded distributional solution of the elliptic equation
\[
(-\Delta)^s u+\alpha u=\mathbf 1
\qquad\text{in }M
\]
is the constant function $u= \alpha^{-1}$ almost everywhere on \(M\).

\medskip
\noindent\textnormal{(iii)} The only bounded distributional solution of the fractional heat equation
\[
\begin{cases}
\partial_t w+(-\Delta)^s w=0 & \text{in }M\times(0,\infty),\\
w(\cdot,0)=\mathbf 1 & \text{on }M,
\end{cases}
\]
is the constant function $w(x,t)=1$ almost everywhere on \(M\times(0,\infty)\).
\end{theorem}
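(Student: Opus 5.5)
The plan is to prove (i)\(\Rightarrow\)(ii), (i)\(\Rightarrow\)(iii) and the two converses separately. Throughout, Theorem~\ref{thm:conservative-characterization-zero-mean}, Theorem~\ref{prop:conservativeness-subordination} and Theorem~\ref{prop:unique-trivial-solution-equivalence} let us pass freely between conservativeness of \(T_t^{(s)}\), stochastic completeness, and the vanishing of \(\mathcal R_s\). The key observation is an identity for constants. For any constant \(c\) and \(\varphi\in C_c^\infty(M)\), Corollary~\ref{cor:int-fraclap-Rs} gives
\[
\int_M c\,(-\Delta)^s\varphi\,dV_g=c\int_M\mathcal R_s\varphi\,dV_g .
\]
Hence \(\alpha^{-1}\) is a distributional solution of \((-\Delta)^s u+\alpha u=\mathbf 1\) if and only if \(\mathcal R_s=0\) a.e. Likewise, \(w\equiv1\) is a distributional solution of the Cauchy problem in (iii) if and only if \(\int_0^\infty\!\int_M\mathcal R_s\Phi=0\) for all test functions \(\Phi\), i.e.\ \(\mathcal R_s=0\). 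The check for \(w\equiv1\) uses \(-\int_0^\infty\!\int_M\partial_t\Phi=\int_M\Phi(\cdot,0)\).

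\textbf{(ii)\(\Rightarrow\)(i) and (iii)\(\Rightarrow\)(i).} Suppose (i) fails. Then Theorem~\ref{thm:fractional-conservative-equivalence} shows \(u:=R^{(s)}_\alpha\mathbf 1\ne\alpha^{-1}\), and Theorem~\ref{prop:conservativeness-subordination} shows \(w(t):=T_t^{(s)}\mathbf 1\not\equiv1\). Both are bounded, with values in \([0,\alpha^{-1}]\) and \([0,1]\) respectively, by sub-Markovianity. It remains to check that they are distributional solutions. By symmetry of \(T^{(s)}_t\) on \(L^2\), and since \(\varphi\in C_c^\infty\subset\operatorname{Dom}((-\Delta)^s)\) with \((-\Delta)^s\varphi\in L^1\) (Lemma~\ref{lem:fraclap-test-L1}), we get
\[
\int_M T_t^{(s)}\mathbf 1\,(-\Delta)^s\varphi=\int_M (-\Delta)^sT^{(s)}_t\varphi=-\frac{d}{dt}\int_M T_t^{(s)}\varphi .
\]
To justify this, one approximates \(\mathbf 1\) by \(\mathbf 1_{B_n}\) and uses monotone convergence together with the \(L^1\) bound on \((-\Delta)^s\varphi\). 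Integrating against \(\Phi\) in time yields the weak formulation of (iii) for \(w\). Taking the Laplace transform in \(t\), i.e.\ \(R_\alpha^{(s)}=\int_0^\infty e^{-\alpha t}T_t^{(s)}dt\), gives
\[
\int_M u\,\bigl((-\Delta)^s+\alpha\bigr)\varphi=\int_M\varphi
\]
for \(u=R_\alpha^{(s)}\mathbf 1\). These are nonconstant solutions, contradicting (ii) and (iii) respectively.

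\textbf{(i)\(\Rightarrow\)(ii),(iii).} Assume (i), so \(M\) is stochastically complete and \(\mathcal R_s=0\). Let \(u\) be a bounded solution of (ii). Then \(v:=u-\alpha^{-1}\) is a bounded distributional solution of \((-\Delta)^sv+\alpha v=0=\mathcal R_s\), and Theorem~\ref{prop:unique-trivial-solution-equivalence} gives \(v=0\). For (iii), let \(w\) be bounded and set \(z:=w-1\). Then \(z\) solves the homogeneous problem with zero initial datum. Apply a Laplace transform in time: take \(\Phi(x,t)=\eta_R(t)e^{-\alpha t}\varphi(x)\) with \(\eta_R\) a cutoff tending to \(1\), and let \(R\to\infty\), using the boundedness of \(z\) and the exponential weight to control the \(\eta_R'\) terms. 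This shows that \(\hat z:=\int_0^\infty e^{-\alpha t}z(\cdot,t)\,dt\in L^\infty\) is a distributional solution of \((-\Delta)^s\hat z+\alpha\hat z=0\). Hence \(\hat z=0\) for every \(\alpha>0\) in a neighborhood (every \(\alpha\) works, since Theorem~\ref{prop:unique-trivial-solution-equivalence} holds for each \(\alpha\)). By uniqueness of the Laplace transform, applied to \(t\mapsto\int_M z\varphi\) for each \(\varphi\), we get \(z=0\) a.e.

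\textbf{Main obstacle.} The main difficulty lies in the passage from the semigroup/resolvent to the weak formulation with the non-integrable function \(\mathbf 1\), namely justifying \(\int_M T_t^{(s)}\mathbf 1\,(-\Delta)^s\varphi=\int_M\mathbf 1\,T_t^{(s)}(-\Delta)^s\varphi\). I would handle it through the \(L^1\)–\(L^\infty\) duality of the sub-Markovian semigroup, since \(T_t^{(s)}\) is symmetric and contractive on \(L^1\), together with \((-\Delta)^s\varphi\in L^1\). The time-derivative step then follows from the strong continuity of \(T^{(s)}_t\) on \(L^1\), as in the proof of Theorem~\ref{thm:generalized identity}.
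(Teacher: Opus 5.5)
Your proposal is correct and follows essentially the same route as the paper. The converse directions use $R_\alpha^{(s)}\mathbf 1$ and $T_t^{(s)}\mathbf 1$ as bounded solutions, verified via $L^1$--$L^\infty$ duality and the $L^1$-differentiability of $t\mapsto T_t^{(s)}\varphi$, while (i)$\Rightarrow$(ii),(iii) reduce to Theorem~\ref{prop:unique-trivial-solution-equivalence}, using a cutoff time-Laplace transform in the parabolic case. Your observation via Corollary~\ref{cor:int-fraclap-Rs} that constants are solutions iff $\mathcal R_s=0$ is just a more explicit form of the paper's appeal to Theorem~\ref{thm:conservative-characterization-zero-mean}.
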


Besides the nonlocal characterizations of stochastic completeness established above, we also obtain several complementary results concerning subordinate semigroups, fractional heat kernels, small-time asymptotics of jump probabilities, and fractional resolvents. We begin with the basic \(L^p\)-theory of the subordinate semigroup. The following proposition shows that the fractional heat semigroup extends consistently to all \(L^p\)-spaces and preserves the contraction property of the classical heat semigroup.

\begin{proposition}\label{prop:fractional-semigroup-Lp}
Let \((M,g)\) be a complete Riemannian manifold, and let \(0<s<1\). Then, for every \(1\le p\le \infty\), the family
\[
\{T_t^{(s)}\}_{t\ge0}=\{e^{-t(-\Delta)^s}\}_{t\ge0}
\]
extends to a contraction semigroup on \(L^p(M)\), namely,
\[
T_0^{(s)}=\mathrm{id},
\qquad
T_{t+\tau}^{(s)}=T_t^{(s)}T_\tau^{(s)}
\quad\text{for all }t,\tau\ge0,
\]
and
\[
\|T_t^{(s)}f\|_{L^p(M)}\le \|f\|_{L^p(M)}
\qquad\text{for all }f\in L^p(M),\ t\ge0.
\]
Moreover, if \(1\le p<\infty\), then \(\{T_t^{(s)}\}_{t\ge0}\) is strongly continuous on \(L^p(M)\), that is,
\[
\lim_{t\rightarrow0}\|T_t^{(s)}f-f\|_{L^p(M)}=0
\qquad\text{for every }f\in L^p(M).
\]
\end{proposition}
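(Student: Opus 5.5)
The plan is to use Bochner subordination. Let \(\eta_t^{(s)}\) be the one-sided \(s\)-stable subordinator density on \((0,\infty)\). It is characterized by
\[
\int_0^\infty e^{-\lambda r}\eta_t^{(s)}(r)\,dr=e^{-t\lambda^s},\qquad \lambda\ge0,
\]
and satisfies \(\eta_t^{(s)}\ge0\) and \(\int_0^\infty\eta_t^{(s)}(r)\,dr=1\). First we establish the subordination formula on \(L^2(M)\):
\[
T_t^{(s)}f=\int_0^\infty P_r f\,\eta_t^{(s)}(r)\,dr ,
\]
with the integral taken in the Bochner sense. This follows from the spectral theorem: for each \(\lambda\ge0\) the scalar identity above holds. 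For \(f\in L^2\), integrating against the spectral measure \(d\langle E(\lambda)f,g\rangle\) and applying Fubini (justified because \(|e^{-r\lambda}|\le1\) and \(\eta_t^{(s)}\in L^1\)) gives the weak form, and hence the strong form. As a byproduct we obtain a nonnegative kernel
\[
p^{(s)}(t,x,y)=\int_0^\infty p(r,x,y)\,\eta_t^{(s)}(r)\,dr
\]
with \(\int_M p^{(s)}(t,x,y)\,dV_g(y)\le1\), because \(\int_M p(r,x,y)\,dV_g(y)\le1\) and \(\eta_t^{(s)}\) is a probability density.

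Next we extend the semigroup to \(L^p(M)\). For \(1\le p<\infty\), the heat semigroup \(P_r\) is a contraction on \(L^p\) for every \(r\), and it is strongly continuous in \(r\) for \(p<\infty\). Hence the right-hand side of the subordination formula is a well-defined \(L^p\)-valued Bochner integral, and Minkowski's integral inequality gives
\[
\|T_t^{(s)}f\|_p\le\int_0^\infty\|P_rf\|_p\,\eta_t^{(s)}(r)\,dr\le\|f\|_p .
\]
On \(L^p\cap L^2\) this operator agrees with the \(L^2\) definition. Since \(L^p\cap L^2\) is dense in \(L^p\) for \(p<\infty\), this gives a consistent extension. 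For \(p=\infty\) we define
\[
T_t^{(s)}f(x)=\int_M p^{(s)}(t,x,y)f(y)\,dV_g(y).
\]
This is the same as taking the adjoint of \(T_t^{(s)}\) on \(L^1\), using the symmetry of \(p^{(s)}\). The sub-Markov bound on the kernel then gives \(\|T_t^{(s)}f\|_\infty\le\|f\|_\infty\).

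For the semigroup law, \(T_0^{(s)}=\mathrm{id}\) and \(T_{t+\tau}^{(s)}=T_t^{(s)}T_\tau^{(s)}\) hold on \(L^2\) by functional calculus. They transfer to \(L^1\cap L^2\), and then to \(L^p\), \(1\le p<\infty\), by density together with the contraction bounds. For \(p=\infty\) they follow by duality with \(L^1\). Alternatively, one can use the convolution identity \(\eta_t^{(s)}*\eta_\tau^{(s)}=\eta_{t+\tau}^{(s)}\), which holds because the Laplace transforms multiply.

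The step needing most care is strong continuity on \(L^p\), \(1\le p<\infty\). For this we write
\[
\|T_t^{(s)}f-f\|_p\le\int_0^\infty\|P_rf-f\|_p\,\eta_t^{(s)}(r)\,dr .
\]
The integrand is bounded by \(2\|f\|_p\) and tends to \(0\) as \(r\to0\), by strong continuity of the heat semigroup on \(L^p\). We then split the integral at a small \(\delta>0\):
\[
\|T_t^{(s)}f-f\|_p\le\sup_{r<\delta}\|P_rf-f\|_p+2\|f\|_p\int_\delta^\infty\eta_t^{(s)}(r)\,dr .
\]
It therefore suffices to show that \(\eta_t^{(s)}\) concentrates at \(0\) as \(t\to0\). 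This is the main point to verify, and it follows from the Laplace transform. Since \(e^{-t\lambda^s}\to1\) for each \(\lambda\), and
\[
\int_\delta^\infty\eta_t^{(s)}(r)\,dr\le\frac{1-e^{-t\lambda^s}}{1-e^{-\lambda\delta}},
\]
we can let \(t\to0\) for fixed \(\lambda\) and conclude that the tail mass tends to \(0\). Combining this with the choice of \(\delta\) gives strong continuity. The bound itself comes from \(1-e^{-\lambda r}\ge1-e^{-\lambda\delta}\) for \(r\ge\delta\) together with total mass \(1\).
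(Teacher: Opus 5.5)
Your proof is correct. It has the same overall architecture as the paper's: Bochner subordination, combined with contractivity and strong continuity of the heat semigroup, followed by a split of the $\eta_t^{(s)}$-integral at a small $\delta$. Two sub-steps are done differently, and the comparison is worth noting.

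\textbf{Contractivity and the extensions.} For $1<p<\infty$ the paper proves $L^1$- and $L^\infty$-contractivity and then interpolates. You instead apply Minkowski's inequality directly in each $L^p$, using the known $L^p$-contractivity and strong continuity of $P_r$. This lets you define $T_t^{(s)}$ on $L^p$ as a genuine $L^p$-valued Bochner integral, and on $L^\infty$ through the kernel, i.e.\ by duality with $L^1$. As a result, the consistency of the extensions is explicit. The paper passes over this point: \eqref{integralres} is stated only in $L^2$, and $r\mapsto P_rf$ is in general not strongly measurable in $L^\infty$.

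\textbf{Tail mass of $\eta_t^{(s)}$.} This is the more substantive difference. To show $\int_\delta^\infty\eta_t^{(s)}(r)\,dr\to0$ as $t\to0$, the paper invokes the cited pointwise stable-density bound \eqref{tailinte}. Your Chebyshev-type estimate uses only positivity, total mass one, and the Laplace transform. Taking $\lambda=1/\delta$, it gives $\int_\delta^\infty\eta_t^{(s)}(r)\,dr\le(1-e^{-1})^{-1}t\delta^{-s}$, which is the same $O(t\delta^{-s})$ rate the paper obtains by integrating \eqref{tailinte}. Moreover, your argument works verbatim for any subordinator, not only the $s$-stable one.

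The paper's route is shorter given \eqref{tailinte}, which it needs anyway for the small-time kernel and jump-probability asymptotics. Your route is self-contained and more general.
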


The next result shows that, despite its nonlocal nature, the subordinate semigroup still enjoys a strong smoothing effect inherited from the classical heat semigroup.

\begin{proposition}\label{lem:subordinate-semigroup-smoothing}
Let \((M,g)\) be a complete Riemannian manifold, let \(0<s<1\), and let $T_t^{(s)}f$ be the subordinate semigroup. Then for every \(t>0\) and every \(f\in L^2(M)\), one has
\[
T_t^{(s)}f\in C^\infty(M).
\]
Moreover, for every chart \(U\Subset M\), every compact set \(K\Subset U\), and every integer \(m\ge0\),
\[
\|T_t^{(s)}f\|_{C^m(K)}
\le
C_{K,U,m,t}\,\|f\|_{L^2(M)},
\]
where $C_{K,U,m,t}>0$ independent of $f.$
\end{proposition}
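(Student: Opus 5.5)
The plan is to represent $T_t^{(s)}$ through Bochner subordination and then transfer the known smoothing of the heat semigroup. Let $\eta_t^{(s)}$ denote the one-sided $s$-stable subordinator density, i.e. the probability density on $(0,\infty)$ with $\int_0^\infty e^{-\lambda\tau}\eta_t^{(s)}(\tau)\,d\tau=e^{-t\lambda^s}$. By functional calculus applied to $-\Delta=\int\lambda\,dE(\lambda)$, we obtain
\[
T_t^{(s)}f=\int_0^\infty P_\tau f\,\eta_t^{(s)}(\tau)\,d\tau ,
\]
where the integral is a Bochner integral in $L^2(M)$. This holds because both sides have the same spectral multiplier, by Fubini in the spectral representation. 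Consequently $T_t^{(s)}$ has the kernel
\[
p^{(s)}(t,x,y)=\int_0^\infty p(\tau,x,y)\,\eta_t^{(s)}(\tau)\,d\tau .
\]

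Smoothness I would get most cleanly via elliptic regularity rather than by differentiating the kernel. Since $T_t^{(s)}f\in\operatorname{Dom}((-\Delta)^k)$ for every $k$, with
\[
\|(-\Delta)^kT_t^{(s)}f\|_{L^2}\le \sup_{\lambda\ge0}\lambda^ke^{-t\lambda^s}\,\|f\|_{L^2}=C_{k,s,t}\|f\|_{L^2},
\]
the function $u=T_t^{(s)}f$ satisfies $\Delta^k u\in L^2(M)$ for all $k$, in the distributional sense on $M$. This uses that the $L^2$ self-adjoint realization agrees with the distributional Laplacian on its domain, which holds on complete $M$ by essential self-adjointness.

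Now fix a chart $U\Subset M$ and $K\Subset U$. Interior elliptic regularity for the operator $\Delta$, which has smooth coefficients in the chart, gives
\[
\|u\|_{H^{2k}(K)}\le C_{K,U,k}\sum_{j\le k}\|\Delta^j u\|_{L^2(U)} .
\]
This follows by iterating the local estimate $\|v\|_{H^{m+2}(K')}\le C(\|\Delta v\|_{H^m(U')}+\|v\|_{L^2(U')})$ over a nested sequence of compact sets between $K$ and $U$. Choosing $2k>m+n/2$, Sobolev embedding on the chart gives
\[
\|u\|_{C^m(K)}\le C\|u\|_{H^{2k}(K)}\le C_{K,U,m,t}\|f\|_{L^2(M)},
\]
with the constant independent of $f$. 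Taking $m$ arbitrary yields $u\in C^\infty(M)$, after modifying on a null set in the usual sense of choosing the continuous representative.

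The main obstacle is the identification of $\Delta^j u$, computed by functional calculus, with the distributional derivative on the chart. This is needed to apply local elliptic estimates. I would handle it by pairing with $\varphi\in C_c^\infty(U)\subset\operatorname{Dom}(\Delta)$ and using self-adjointness:
\[
\langle (-\Delta)^j u,\varphi\rangle=\langle u,(-\Delta)^j\varphi\rangle ,
\]
where $(-\Delta)^j\varphi$ is the classical derivative.

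An alternative route, if one prefers the kernel, is to use the Bochner representation: $P_\tau f$ is smooth with local $C^m$ bounds of order $C(1+\tau^{-N})\|f\|_{L^2}$. The moments $\int\tau^{-N}\eta_t^{(s)}(\tau)\,d\tau$ are finite, since $\eta_t^{(s)}$ decays faster than any power at $0$. Either route works, but the spectral/elliptic one avoids heat kernel derivative estimates.
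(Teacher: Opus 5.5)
Your proposal is correct, but your main argument takes a different route from the paper. The paper's proof is what you call your alternative route. It takes the local bound $\|P_\tau f\|_{C^m(K)}\le C(1+\tau^{-\sigma})\|f\|_{L^2(M)}$ from Grigor'yan's Theorem 7.6 and integrates it against the subordinator density. The small-$\tau$ bound $\eta_t^{(s)}(\tau)\le C_t\,\tau^{-1-s}e^{-c\tau^{-s}}$, obtained from scaling, absorbs the singularity $\tau^{-\sigma}$. On $[1,\infty)$ it suffices that $\eta_t^{(s)}$ is a probability density.

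Your main argument bypasses subordination entirely. Because $\lambda^k e^{-t\lambda^s}$ is bounded, $u=T_t^{(s)}f$ lies in every $\operatorname{Dom}((-\Delta)^k)$ with explicit bounds. Pairing with $C_c^\infty(U)$ identifies these powers with distributional ones, and interior elliptic regularity plus Sobolev embedding then finish the proof. This is essentially the mechanism behind the heat-semigroup estimate the paper quotes, so you are working one level lower.

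Your route buys three things:
\begin{itemize}
\item You need no information on $\eta_t^{(s)}$, neither its super-polynomial decay at $0$ nor the Laplace transform identity.
\item You avoid having to justify that the $L^2$-Bochner integral $\int_0^\infty \eta_t^{(s)}(\tau)P_\tau f\,d\tau$ converges in $C^m(K)$ to the same function.
\item The argument applies verbatim to $\phi(-\Delta)f$ for any $\phi$ with $\sup_{\lambda\ge0}\lambda^k|\phi(\lambda)|<\infty$ for all $k$.
\end{itemize}

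The paper's route treats heat-semigroup smoothing as a black box and stays inside the subordination framework used throughout the paper. That framework is the natural starting point for non-$L^2$ data such as $f\in L^\infty(M)$, where your spectral bound is unavailable.

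One small technicality in your write-up: Sobolev embedding needs $H^{2k}$ on an open neighbourhood of $K$, not on $K$ itself. This is harmless; run the elliptic estimate on a slightly larger compact set.
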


We also investigate the global behavior of the fractional heat kernel. The following proposition identifies its large-time exponential decay rate in terms of the bottom of the spectrum, exactly paralleling the classical heat kernel asymptotics.

\begin{proposition}\label{prop:fractional-heat-kernel-long-time}
Let \((M,g)\) be a complete Riemannian manifold, let \(0<s<1\), and let
\(p_t^{(s)}(x,y)\) be the heat kernel associated with \(e^{-t(-\Delta)^s}\). 
Then, for every \(x,y\in M\),
\[
\lim_{t\to\infty}\frac{\log p_t^{(s)}(x,y)}{t}
=
-\lambda_0(M)^s,
\]
where
\[
\lambda_0(M):=\inf \sigma(-\Delta).
\]
\end{proposition}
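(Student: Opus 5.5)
The plan is to pass everything through the spectral representation and the subordination formula, and then reduce to the classical long-time asymptotics of the heat kernel. Write the fractional heat kernel as
\[
p_t^{(s)}(x,y)=\int_0^\infty p(r,x,y)\,\eta_t^{(s)}(r)\,dr,
\]
where \(\eta_t^{(s)}\) is the one-sided \(s\)-stable subordinator density, characterized by \(\int_0^\infty e^{-\lambda r}\eta_t^{(s)}(r)\,dr=e^{-t\lambda^s}\). We also use Li's theorem: on a complete manifold, \(\lim_{r\to\infty}\frac{\log p(r,x,y)}{r}=-\lambda_0\) for every \(x,y\); see Grigor'yan's book. We need upper and lower bounds on \(p_t^{(s)}(x,y)\) with the correct exponential rate \(e^{-t\lambda_0^s}\) up to subexponential factors.

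\textbf{Upper bound.} Use the semigroup property and Cauchy--Schwarz:
\[
p_t^{(s)}(x,y)=\int_M p_{1}^{(s)}(x,z)\,\bigl(T_{t-2}^{(s)}p_1^{(s)}(\cdot,y)\bigr)(z)\,dV_g(z)\le \|p_1^{(s)}(x,\cdot)\|_{L^2}\,\|p_1^{(s)}(\cdot,y)\|_{L^2}\,e^{-(t-2)\lambda_0^s}.
\]
This holds because \(\|T_\tau^{(s)}\|_{L^2\to L^2}=e^{-\tau\lambda_0^s}\) by the spectral theorem. We need \(p_1^{(s)}(x,\cdot)\in L^2\). Subordinating, \(\|p_1^{(s)}(x,\cdot)\|_{L^2}\le\int \|p(r,x,\cdot)\|_{L^2}\eta_1(r)\,dr=\int p(2r,x,x)^{1/2}\eta_1(r)\,dr\). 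This is finite, since \(p(2r,x,x)\le C r^{-n/2}\) for small \(r\) and is bounded for large \(r\), while \(\eta_1\) decays like \(e^{-c r^{-s/(1-s)}}\) near \(0\). Taking logarithms and dividing by \(t\) gives \(\limsup\le-\lambda_0^s\).

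\textbf{Lower bound.} This is the main obstacle, because it needs a subordinated version of the lower heat-kernel bound. Write \(q(r)=p(r,x,y)\). By Li's theorem, for each \(\varepsilon>0\) there is \(R\) with \(q(r)\ge e^{-(\lambda_0+\varepsilon)r}\) for \(r\ge R\). Then
\[
p_t^{(s)}(x,y)\ge\int_R^\infty e^{-(\lambda_0+\varepsilon)r}\eta_t(r)\,dr
= e^{-t(\lambda_0+\varepsilon)^s}-\int_0^R e^{-(\lambda_0+\varepsilon)r}\eta_t(r)\,dr .
\]
The subtracted term is at most \(\mathbb P(S_t\le R)\), which tends to zero superexponentially in \(t\). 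Indeed, by a Chernoff bound, \(\mathbb P(S_t\le R)\le e^{\mu R-t\mu^s}\) for every \(\mu>0\); choosing \(\mu\) large gives decay faster than any \(e^{-Ct}\). Hence \(\liminf\frac1t\log p_t^{(s)}\ge-(\lambda_0+\varepsilon)^s\), and letting \(\varepsilon\to0\) completes the argument.

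If \(\lambda_0=0\), the same two bounds apply. The lower bound then uses \(q(r)\ge e^{-\varepsilon r}\) eventually, and the upper bound reduces to boundedness of \(p_t^{(s)}\), which follows from the semigroup/Cauchy--Schwarz estimate. The limit is therefore \(0\).
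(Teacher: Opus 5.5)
Your proof is correct, and it differs from the paper's in both halves.

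\textbf{Upper bound.} The paper does not use a semigroup/Cauchy--Schwarz estimate. It turns Proposition~\ref{prop:heat-kernel-long-time} into a global bound $p(\tau,x,y)\le C_\varepsilon e^{-(\lambda_0-\varepsilon)\tau}$ for all $\tau>0$ and integrates it against $\eta_t^{(s)}$ via \eqref{laplacets}. That is more elementary, but it rests on boundedness of $p(\cdot,x,y)$ on $[0,\tau_\varepsilon]$. This fails for $x=y$, since $p(\tau,x,x)\to\infty$ as $\tau\to0$. Your bound
$p_t^{(s)}(x,y)\le\|p_1^{(s)}(x,\cdot)\|_{L^2}\|p_1^{(s)}(\cdot,y)\|_{L^2}e^{-(t-2)\lambda_0^s}$
is the subordinated analogue of Proposition~\ref{boundp}. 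It has no $\varepsilon$-loss and covers the diagonal, because the $r^{-n/2}$ singularity is absorbed by the decay of $\eta_1^{(s)}$ at $0$; the bound \eqref{eta1s} already suffices for this.

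\textbf{Lower bound.} Both proofs cut the subordination integral at a large threshold and use \eqref{laplacets}. The difference is how $\int_0^R\eta_t^{(s)}$ is controlled. With threshold $R$, the paper's pointwise bound \eqref{eta1s} gives only $\frac Cs e^{-tR^{-s}}$. Its rate $R^{-s}$ beats $(\lambda_0+\varepsilon)^s$ only when the threshold is small, so the paper asserts $\tau_\varepsilon\in(0,\varepsilon^2)$. That does not follow from Proposition~\ref{prop:heat-kernel-long-time} and is false in general: for $x\neq y$, $p(\tau,x,y)\to0$ as $\tau\to0$. Your Chernoff bound $\int_0^R\eta_t^{(s)}\le e^{\mu R-t\mu^s}$, with $\mu$ arbitrary, gives superexponential decay in $t$ for every fixed $R$. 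So no smallness of the threshold is needed, and this is what makes the lower bound work.

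Two details should be spelled out. First, the pointwise Chapman--Kolmogorov identity for $p_t^{(s)}$, which follows from the subordination formula, $\eta_t^{(s)}*\eta_u^{(s)}=\eta_{t+u}^{(s)}$, and \eqref{eq:heat-kernel-semigroup}. Second, the standard local bound $p(r,x,x)\le Cr^{-n/2}$ for small $r$.
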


At small time scales, the nonlocal heat kernel exhibits a fundamentally different behavior from the local one, see Proposition \ref{prop:varadhan-heat-kernel}. The next result shows that away from the diagonal, the leading term is linear in \(t\) and is precisely governed by the jump kernel \(K_s(x,y)\).

\begin{proposition}\label{prop:fractional-heat-kernel-small-time-offdiag}
Let \((M,g)\) be a complete Riemannian manifold, let \(0<s<1\), and let
\(p_t^{(s)}(x,y)\) be the heat kernel associated with \(e^{-t(-\Delta)^s}\). Then, for every \(x\neq y\),
\[
\lim_{t\rightarrow0}\frac{p_t^{(s)}(x,y)}{t}
=
K_s(x,y),
\]
where \(K_s(x,y)\) is defined in \eqref{frackernel}. 
\end{proposition}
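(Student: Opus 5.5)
The proof will go through the subordination formula. Write
\[
p_t^{(s)}(x,y)=\int_0^\infty p(\tau,x,y)\,\eta_t^{(s)}(\tau)\,d\tau ,
\]
where \(\eta_t^{(s)}\) is the one-sided \(s\)-stable subordinator density, defined by \(\int_0^\infty e^{-\lambda\tau}\eta_t^{(s)}(\tau)\,d\tau=e^{-t\lambda^s}\). This identity follows from functional calculus applied to \(e^{-t(-\Delta)^s}\); I take it to be the representation used in Section~\ref{subordinate}. The statement then reduces to a limit interchange:
\[
\frac{p_t^{(s)}(x,y)}{t}=\int_0^\infty p(\tau,x,y)\,\frac{\eta_t^{(s)}(\tau)}{t}\,d\tau ,
\]
and we must show this tends to \(\frac{s}{\Gamma(1-s)}\int_0^\infty p(\tau,x,y)\tau^{-1-s}\,d\tau=K_s(x,y)\).

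The key input is the classical behaviour of the stable density. One has \(\eta_t^{(s)}(\tau)=t^{-1/s}\eta_1^{(s)}(t^{-1/s}\tau)\). The tail asymptotics \(\eta_1^{(s)}(u)\sim \frac{s}{\Gamma(1-s)}u^{-1-s}\) as \(u\to\infty\) then give
\[
\frac{\eta_t^{(s)}(\tau)}{t}\to \frac{s}{\Gamma(1-s)}\tau^{-1-s}\qquad\text{pointwise as }t\to0,\ \tau>0 .
\]
There is also a uniform bound \(\eta_1^{(s)}(u)\le C u^{-1-s}\) for all \(u>0\), which holds because \(\eta_1^{(s)}\) vanishes faster than any power at \(0\). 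It yields \(\eta_t^{(s)}(\tau)/t\le C\tau^{-1-s}\) for all \(t,\tau>0\).

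Dominated convergence then needs \(\tau\mapsto p(\tau,x,y)\tau^{-1-s}\) to be integrable on \((0,\infty)\) for fixed \(x\neq y\). This is exactly the finiteness of \(K_s(x,y)\) off the diagonal, which the paper asserts in Section~\ref{Equiva}. Near \(\tau=\infty\) it is immediate: \(p(\tau,x,y)\le p(\tau,x,x)^{1/2}p(\tau,y,y)^{1/2}\) is bounded for \(\tau\ge1\), and \(\tau^{-1-s}\) is integrable there.

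The main obstacle is the behaviour near \(\tau=0\) for \(x\ne y\). Here I would use a local Gaussian-type bound, for instance Varadhan's asymptotic (Proposition~\ref{prop:varadhan-heat-kernel}) or a Davies–Gaffney/local parabolic estimate on a compact neighbourhood. Either gives \(p(\tau,x,y)\le C e^{-d(x,y)^2/(C\tau)}\) for \(\tau\le1\), which kills \(\tau^{-1-s}\). If only Varadhan's pointwise limit is available, it still gives \(p(\tau,x,y)\le e^{-d(x,y)^2/(5\tau)}\) for small \(\tau\), which suffices because \(x,y\) are fixed. With integrability in hand, dominated convergence completes the proof. The stable-density facts (the scaling relation, the tail asymptotic and the global bound) I would cite as standard, for example from Zolotarev or Bogdan et al.
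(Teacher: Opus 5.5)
Your proposal is correct and follows the paper's proof essentially step for step. Both arguments use the subordination formula, obtain the pointwise limit $\eta_t^{(s)}(\tau)/t\to\frac{s}{\Gamma(1-s)}\tau^{-1-s}$ from scaling plus the tail asymptotics of $\eta_1^{(s)}$, use the uniform bound $\eta_t^{(s)}(\tau)\le Ct\tau^{-1-s}$, and conclude by dominated convergence. The only difference is that you justify the integrability of $p(\tau,x,y)\tau^{-1-s}$ near $\tau=0$ explicitly via Varadhan's formula, whereas the paper invokes the finiteness of $K_s(x,y)$ established in Section~\ref{Equiva}.
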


This asymptotic formula admits a natural probabilistic interpretation. In fact, it yields the short-time asymptotics of jump probabilities for the Markov process generated by the fractional Laplacian. The contrast with the classical heat kernel is a direct consequence of the nonlocal nature of the operator \((-\Delta)^s\). For the local heat semigroup \(e^{t\Delta}\), the associated diffusion has continuous sample paths. Therefore, reaching a fixed point \(y\neq x\) in a very short time \(t\) is a highly atypical event, and the corresponding heat kernel is exponentially small, with leading behavior of the form
\[
\exp\!\left(-\frac{d_g(x,y)^2}{4t}\right).
\]
By contrast, the semigroup \(e^{-t(-\Delta)^s}\) corresponds to a jump process. Thus, for \(x\neq y\), the dominant contribution as \(t\downarrow0\) comes from the possibility of a single jump from \(x\) to \(y\), which is of order \(t\), and the coefficient is precisely the jump kernel \(K_s(x,y)\).

We now briefly recall the probabilistic interpretation of the fractional heat semigroup. Let \(0<s<1\), and let $\{T_t^{(s)}\}_{t\ge0}$ be the subordinate semigroup generated by \(-(-\Delta)^s\). Since this is a Markov semigroup, there exists an associated Markov process \((X_t^{(s)})_{t\ge0},\) 
whose transition probabilities are described by the fractional heat kernel \(p_t^{(s)}(x,y)\). More precisely, for every measurable set \(A\subset M\),
\[
\mathbb P_x\,\bigl(X_t^{(s)}\in A\bigr)
=
\int_A p_t^{(s)}(x,y)\,dV_g(y),
\qquad t>0.
\]
In contrast to the Brownian motion associated with the classical heat semigroup, the process \((X_t^{(s)})_{t\ge0}\) is a jump process. The next result shows that, away from the starting point \(x\), the short-time transition probability is governed by the jump kernel \(K_s(x,y)\).

\begin{proposition}\label{prop:jump-probability}
Let \((M,g)\) be a complete Riemannian manifold, let \(0<s<1\), and let
\((X_t^{(s)})_{t\ge0}\) be the Markov process associated with the semigroup
\((e^{-t(-\Delta)^s})_{t\ge0}\). Fix \(x\in M\), and let \(A\subset M\) be a measurable set such that
\[
\operatorname{dist}(x,A)>0,
\qquad
V_g(A)<\infty.
\]
Then
\[
\lim_{t\rightarrow0}\frac{1}{t}\,\mathbb P_x\!\bigl(X_t^{(s)}\in A\bigr)
=
\int_A K_s(x,y)\,dV_g(y),
\]
where $K_s(x,y)$ is defined in \eqref{frackernel}.
\end{proposition}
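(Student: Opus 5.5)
The plan is to integrate the pointwise asymptotics of Proposition~\ref{prop:fractional-heat-kernel-small-time-offdiag} over \(A\), using a uniform domination to pass the limit inside the integral. First, write
\[
\frac1t\,\mathbb P_x\bigl(X_t^{(s)}\in A\bigr)=\int_A \frac{p_t^{(s)}(x,y)}{t}\,dV_g(y).
\]
For each \(y\in A\) we have \(y\neq x\) (since \(\operatorname{dist}(x,A)>0\)), so the integrand converges pointwise to \(K_s(x,y)\) as \(t\to0\). It then suffices to find an integrable majorant on \(A\), uniform for \(t\in(0,1]\), and to apply dominated convergence.

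For the majorant I would use the subordination formula
\[
p_t^{(s)}(x,y)=\int_0^\infty p(\tau,x,y)\,\eta_t^{(s)}(\tau)\,d\tau,
\]
where \(\eta_t^{(s)}\) is the density of the \(s\)-stable subordinator. Two facts about this density are needed.
\begin{itemize}
\item[(a)] The scaling relation \(\eta_t^{(s)}(\tau)=t^{-1/s}\eta_1^{(s)}(t^{-1/s}\tau)\).
\item[(b)] The tail bound \(\eta_1^{(s)}(r)\le C\,r^{-1-s}\) for all \(r>0\).
\end{itemize}
Together these give the uniform estimate \(\eta_t^{(s)}(\tau)\le C\,t\,\tau^{-1-s}\) for all \(t,\tau>0\). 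Consequently
\[
\frac{p_t^{(s)}(x,y)}{t}\le C\int_0^\infty p(\tau,x,y)\,\tau^{-1-s}\,d\tau=C'\,K_s(x,y),
\]
with \(C'=C\,\Gamma(1-s)/s\). This bound holds uniformly in \(t>0\), so \(C'K_s(x,\cdot)\) is the candidate majorant on \(A\).

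The main obstacle is showing that \(\int_A K_s(x,y)\,dV_g(y)<\infty\). Split the \(\tau\)-integral defining \(K_s\) at \(\tau=1\).
\begin{itemize}
\item For \(\tau\ge1\), Fubini and \(\int_M p(\tau,x,y)\,dV_g(y)\le1\) give \(\int_A\int_1^\infty p\,\tau^{-1-s}\,d\tau\,dV_g\le 1/s\). This uses only sub-Markovianity; the finiteness of \(V_g(A)\) is not needed here.
\item For \(\tau\in(0,1)\), I need a bound on \(\sup_{y\in A}\int_0^1 p(\tau,x,y)\tau^{-1-s}d\tau\). Then the finiteness of \(V_g(A)\) closes the estimate. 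Set \(\delta=\operatorname{dist}(x,A)\).
\end{itemize}

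For the short-time piece, I would use the local Gaussian-type upper bound on the ball \(B(x,\delta/2)\). Via the Davies–Gaffney / integrated maximum principle, or the standard estimate \(p(\tau,x,y)\le C_x\,\tau^{-N}e^{-\delta^2/(C\tau)}\) for \(d(x,y)\ge\delta\) and \(\tau\le1\), this piece is bounded by a constant depending only on \(x\) and \(\delta\). Presumably this is exactly what was used earlier (Section~\ref{Equiva}) to show that \(K_s(x,y)\) is well defined and locally bounded off the diagonal, so I would invoke that result.

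Failing a pointwise bound, there is an alternative that again uses only \(\int_M p(\tau,x,y)\,dV_g(y)\le1\). Integrate in \(y\) over \(A\) first and use the Davies–Gaffney estimate
\[
\int_{M\setminus B(x,\delta)}p(\tau,x,y)\,dV_g(y)\le C e^{-\delta^2/(C\tau)}.
\]
This follows by combining the semigroup property with the \(L^2\) Gaffney bound at the point \(x\) through smoothing. Its right-hand side makes \(\int_0^1 \tau^{-1-s}e^{-\delta^2/(C\tau)}d\tau\) finite. With integrability established, dominated convergence yields the claimed limit.
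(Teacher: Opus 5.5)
Your proposal is correct and follows the same route as the paper. Both arguments dominate $p_t^{(s)}(x,y)/t\le C\,K_s(x,y)$ using $\eta_t^{(s)}(\tau)\le C\,t\,\tau^{-1-s}$, prove $K_s(x,\cdot)\in L^1(A)$ by splitting at $\tau=1$ (sub-Markovianity for $\tau\ge1$; a short-time bound uniform in $y\in A$ together with $V_g(A)<\infty$ for $\tau\le1$), and conclude by dominated convergence. The one difference is how the uniform short-time bound is justified. The paper takes $p(\tau,x,y)\le C\tau$ from Varadhan's formula (Proposition~\ref{prop:varadhan-heat-kernel}), but that formula, like the bound in Section~\ref{Equiva} you propose to invoke, holds only for each fixed $y$. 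What is actually needed is a bound uniform in $y$, such as your local off-diagonal Gaussian estimate at $x$, or the integrated exit estimate $\int_{M\setminus B(x,\delta)}p(\tau,x,y)\,dV_g(y)\le Ce^{-\delta^2/(C\tau)}$, which would even make the hypothesis $V_g(A)<\infty$ unnecessary.
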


The kernel \(K_s(x,y)\) plays a central role in the probabilistic potential theory of the
fractional operator \((-\Delta)^s\). From the probabilistic point of view, \(K_s(x,y)\,dV_g(y)\)
is the jump intensity of the Markov process associated with the semigroup
\((e^{-t(-\Delta)^s})_{t\ge0}\): it describes the first-order rate at which the process jumps
from \(x\) to \(dy\). Consequently, \(K_s\) determines the short-time exit behavior of the process
from a domain \(D\), and hence it is closely related to the exit distribution of the stopped
process.

We next turn to the elliptic theory associated with the resolvent $R_\alpha^{(s)}$. For this purpose, we first introduce the natural notion of weak solution in the fractional Sobolev space \(H^s(M)\).

\begin{definition}\label{def:fractional-weak-solution}
Let $0<s<1$, $\alpha>0$, and $f\in L^2(M)$. Set
\[
\mathcal E_s(u,v):=\langle (-\Delta)^{\frac{s}{2}} u,(-\Delta)^{\frac{s}{2}} v\rangle_{L^2(M)},
\qquad
u,v\in H^s(M).
\]

\begin{enumerate}
\item[(i)] A function $u\in H^s(M)$ is called a \emph{weak solution} of
\begin{equation}\label{eq:fractional-elliptic}
(-\Delta)^{s}u+\alpha u=f
\end{equation}
if
\[
\mathcal E_s(u,\varphi)+\alpha\int_M u\varphi\,dV_g
=
\int_M f\varphi\,dV_g
\qquad\forall\,\varphi\in H^s(M).
\]

\item[(ii)] A function $u\in H^s(M)$ is called a \emph{weak supersolution} of
\eqref{eq:fractional-elliptic} if
\[
\mathcal E_s(u,\varphi)+\alpha\int_M u\varphi\,dV_g
\ge
\int_M f\varphi\,dV_g
\qquad\forall\,\varphi\in H^s(M),\ \varphi\ge0.
\]
\end{enumerate}
\end{definition}

With this notion in hand, the fractional resolvent can be characterized variationally. The next proposition shows that it yields the unique weak solution of the elliptic equation, and moreover it enjoys a minimality property among all nonnegative weak supersolutions.

\begin{proposition}\label{prop:fractional-resolvent-minimality}
Let $(M,g)$ be a complete Riemannian manifold, let $0<s<1$, $\alpha>0$, and let
$f\in L^2(M)$. Then the following assertions hold.

\begin{enumerate}
\item[(i)] The function
\[
u_\alpha:=R_\alpha^{(s)}f=((-\Delta)^s+\alpha)^{-1}f
\]
belongs to $H^{2s}(M)$ and is the unique weak solution of
\begin{equation}\label{eq:fractional-elliptic-resolvent}
(-\Delta)^s u+\alpha u=f
\qquad\text{on }M,
\end{equation}
that is,
\begin{equation}\label{eq:fractional-weak-resolvent}
\mathcal E_s(u_\alpha,\varphi)+\alpha\int_M u_\alpha\varphi\,dV_g
=
\int_M f\varphi\,dV_g
\qquad
\forall\,\varphi\in H^s(M).
\end{equation}

\item[(ii)]  If \(u\in H^s(M)\) is a weak supersolution of \(((-\Delta)^s+\alpha)u=f\), namely
\[
\mathcal E_s(u,\varphi)+\alpha\int_M u\varphi\,dV_g
\ge
\int_M f\varphi\,dV_g
\qquad
\forall\,\varphi\in H^s(M),\ \varphi\ge 0,
\]
then
\[
u\ge R_\alpha^{(s)}f
\qquad\text{a.e. on }M.
\]
In particular, if \(f\ge 0\), then \(R_\alpha^{(s)}f\) is the minimal nonnegative weak supersolution.
\end{enumerate}
\end{proposition}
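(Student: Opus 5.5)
Both parts follow from the spectral theorem applied to $A:=(-\Delta)^s=\int_{[0,\infty)}\lambda^s\,dE(\lambda)$, which is nonnegative and self-adjoint, together with positivity of the subordinate resolvent.

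\textbf{Part (i).} Since $\alpha>0$, the multiplier $(\lambda^s+\alpha)^{-1}$ is bounded, so $u_\alpha=R_\alpha^{(s)}f\in\operatorname{Dom}(A)$. Here I identify $\operatorname{Dom}(A)$ with $H^{2s}(M)$, equipped with the graph norm $\int(1+\lambda^{2s})\,d\langle E(\lambda)f,f\rangle$. This identification is presumably how the paper defines $H^{2s}$; otherwise one takes it as the definition. In particular $u_\alpha\in H^{2s}(M)\subset H^s(M)=\operatorname{Dom}(A^{1/2})$.

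For $\varphi\in H^s(M)$, the functional calculus gives
\[
\mathcal E_s(u_\alpha,\varphi)=\langle A^{1/2}u_\alpha,A^{1/2}\varphi\rangle=\langle Au_\alpha,\varphi\rangle .
\]
Hence the left side of \eqref{eq:fractional-weak-resolvent} equals $\langle (A+\alpha)u_\alpha,\varphi\rangle=\langle f,\varphi\rangle$, so $u_\alpha$ is a weak solution.

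For uniqueness, let $w$ be the difference of two weak solutions. Then $\mathcal E_s(w,\varphi)+\alpha\langle w,\varphi\rangle=0$ for all $\varphi\in H^s(M)$. Taking $\varphi=w$ gives $\|A^{1/2}w\|^2+\alpha\|w\|^2=0$, so $w=0$. This is just Lax--Milgram coercivity.

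\textbf{Part (ii).} Set $w:=u-u_\alpha\in H^s(M)$. Subtracting the weak identity for $u_\alpha$ from the supersolution inequality gives
\[
\mathcal E_s(w,\varphi)+\alpha\langle w,\varphi\rangle\ge0\quad\text{for all }\varphi\in H^s(M),\ \varphi\ge0 .
\]
The goal is $w\ge0$ a.e.

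I would test with $\varphi=w^-=\max(-w,0)$. This needs two facts:
\begin{itemize}
\item $w^-\in H^s(M)$;
\item $\mathcal E_s(w^+,w^-)\le0$.
\end{itemize}
Both follow once $\mathcal E_s$ is known to be a Dirichlet form. I would establish this via the Bochner/subordination representation
\[
\mathcal E_s(v,v)=\frac{s}{\Gamma(1-s)}\int_0^\infty \frac{\langle v-e^{t\Delta}v,\,v\rangle}{t^{1+s}}\,dt ,
\]
together with the heat-kernel identity
\[
\langle v-P_tv,v\rangle=\tfrac12\iint (v(x)-v(y))^2p(t,x,y)\,dV_g(x)\,dV_g(y)+\int (1-\Theta(t,x))\,v(x)^2\,dV_g(x).
\]
Integrating in $t$ yields the Beurling--Deny form
\[
\mathcal E_s(v,v)=\tfrac12\iint (v(x)-v(y))^2K_s(x,y)+\int \mathcal R_s v^2 .
\]
This representation shows two things. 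First, the normal contraction $v\mapsto v^-$ does not increase $\mathcal E_s$, since $|v^-(x)-v^-(y)|\le|v(x)-v(y)|$; hence $w^-\in H^s(M)$. Second, since $w^+w^-=0$,
\[
\mathcal E_s(w^+,w^-)=-\iint w^+(x)\,w^-(y)\,K_s(x,y)\,dV_g(x)\,dV_g(y)\le0 .
\]

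Plugging $\varphi=w^-$ into the inequality and writing $w=w^+-w^-$:
\[
0\le\mathcal E_s(w^+,w^-)-\mathcal E_s(w^-,w^-)-\alpha\|w^-\|^2\le-\alpha\|w^-\|_{L^2}^2 .
\]
Hence $w^-=0$, that is, $u\ge R_\alpha^{(s)}f$ a.e.

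For the final claim, suppose $f\ge0$. Positivity of $R_\alpha^{(s)}=\int_0^\infty e^{-\alpha t}T_t^{(s)}\,dt$ gives $R_\alpha^{(s)}f\ge0$, because $T_t^{(s)}$ is positivity preserving by subordination (Proposition \ref{prop:fractional-semigroup-Lp}). By part (i), $R_\alpha^{(s)}f$ is itself a weak supersolution, so it is a nonnegative weak supersolution. The comparison just proved shows it lies below every weak supersolution, which gives minimality.

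\textbf{Main obstacle.} The hard step is rigorously justifying the Markov (Dirichlet form) property, namely $w^-\in H^s(M)$ and $\mathcal E_s(w^+,w^-)\le0$. If the Beurling--Deny identity is inconvenient to justify, I would instead use the approximating forms $\mathcal E^{(t)}(v,v)=t^{-1}\langle v-T_t^{(s)}v,v\rangle$, which increase to $\mathcal E_s(v,v)$. Positivity and sub-Markovianity of $T_t^{(s)}$ give $\mathcal E^{(t)}(w^+,w^-)=-t^{-1}\langle T_t^{(s)}w^+,w^-\rangle\le0$ and $\mathcal E^{(t)}(w^-,w^-)\le\mathcal E^{(t)}(w,w)$. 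Letting $t\to0$ then yields both facts.
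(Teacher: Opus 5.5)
Your proposal is correct and follows the paper's proof. Part (i) goes through the spectral theorem plus coercivity, and part (ii) tests the subtracted inequality with $w^-$ and uses $\mathcal E_s(w^+,w^-)\le 0$. The one difference is that the paper simply asserts that $\mathcal E_s$ is a Dirichlet form (citing Theorem 1.4.1 of Fukushima et al.) and gets the sign of the cross term from $\mathcal E_s(|v|,|v|)\le \mathcal E_s(v,v)$. You instead justify these facts directly, either through the Beurling--Deny decomposition with $K_s$ and $\mathcal R_s$, or through the approximating forms $t^{-1}\langle v-T_t^{(s)}v,v\rangle$; the latter is essentially the proof of the cited theorem and needs only positivity of $T_t^{(s)}$.
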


The remainder of the paper is organized as follows. In Section~2, we recall the basic facts on complete Riemannian manifolds and heat kernels, review several equivalent definitions of the fractional Laplacian, and introduce the subordinate semigroup associated with \((-\Delta)^s\). In Section~3, we develop a number of analytic and probabilistic results for the fractional Laplacian, including contraction and regularity properties of the subordinate semigroup, asymptotic behavior of the fractional heat kernel, small-time asymptotics of jump probabilities, the associated resolvent theory and the generalized conservation property. Section~4 contains the main results of the paper. In particular, we establish several equivalent characterizations of stochastic completeness in the nonlocal setting, including the equivalence between classical and fractional conservativeness, the resolvent and zero-mean criteria, the \(L^1\)-core criterion, and the elliptic and parabolic characterizations.

	%====================================================
	
\section{Preliminaries}

In this section, we recall the basic material needed in the sequel, including the heat semigroup, stochastic completeness, the fractional Laplacian, and several properties of the associated subordinate semigroup.

\subsection{Complete Riemannian Manifolds and the Heat Kernel}\label{comple}
Throughout this paper, we assume that \((M,g)\) is connected, complete, and without boundary. For brevity, we shall simply refer to such a manifold as a complete manifold. By the classical theorem of Gaffney and Chernoff \cite{gaffney1954heat,chernoff1973essential}, the minimal Laplace--Beltrami operator
\[
-\Delta:\ C_c^\infty(M)\subset L^2(M)\to L^2(M)
\]
is essentially self-adjoint. Consequently, \(-\Delta\) admits a unique self-adjoint realization on \(L^2(M)\).

By contrast, if \(M\) has boundary, the operator
\[
-\Delta\big|_{C_c^\infty(M)}
\]
is in general not essentially self-adjoint, since different self-adjoint extensions may arise from different boundary conditions. Therefore, in the presence of a boundary, uniqueness of the self-adjoint realization typically requires incorporating an appropriate boundary condition into the initial domain. For example, in the recent work of Bianchi, Güneysu, and Setti \cite{bianchi2025neumann}, essential self-adjointness of the Laplacian with Neumann boundary condition is established by taking the initial domain to be \(\widehat{C}_c^\infty(M)\).

More generally, many of the results in this paper can also be formulated on incomplete Riemannian manifolds. In that setting, however, the Laplace--Beltrami operator defined initially on \(C_c^\infty(M)\) need not admit a unique self-adjoint extension. One must therefore first choose a specific self-adjoint realization of the Laplacian, corresponding for instance to a prescribed boundary behavior or extension procedure, and then develop the theory relative to that choice.

For basic background on Riemannian manifolds and the Laplace--Beltrami operator, we refer to
\cite{hebey2000nonlinear,rosenberg1997laplacian,jost2005riemannian,petersen2006riemannian,grigoryan2009heat,li2012geometric,lee2018introduction}.

  Next, we recall some basic facts about the heat kernel associated with the heat semigroup
\[
P_t:=e^{t\Delta},\qquad t\ge 0.
\]
Since \(-\Delta\) is a nonnegative self-adjoint operator on \(L^2(M)\), the family \(\{P_t\}_{t\ge0}\) forms a strongly continuous contraction semigroup on \(L^2(M)\). More generally, \(\{P_t\}_{t\ge0}\) extends to a strongly continuous contraction semigroup on \(L^p(M)\) for every \(1\le p<\infty\); see \cite[Chapter 7.4]{grigoryan2009heat}. We next record the basic properties of the heat kernel \(p(t,x,y)\); see \cite[Theorem 7.13]{grigoryan2009heat}.

\begin{proposition}\label{prop:heat-kernel-basic}
For every \(t,s>0\) and \(x,y\in M\), one has
\[
p(t,x,y)=p(t,y,x),\qquad p(t,x,y)> 0,
\]
and
\[
\int_M p(t,x,y)\,dV_g(y)\le 1,
\]
with equality if \(M\) is stochastically complete, and
\begin{equation}\label{eq:heat-kernel-semigroup}
    p(t+s,x,y)=\int_M p(t,x,z)p(s,z,y)\,dV_g(z).
\end{equation}
Moreover, for every \(f\in L^2(M)\),
\begin{equation}\label{eq:heat-kernel-representation}
P_t f(x)=\int_M p(t,x,y)f(y)\,dV_g(y),
\end{equation}
and, for each fixed \(y\in M\), the map \((t,x)\mapsto p(t,x,y)\) belongs to
\(C^\infty((0,\infty)\times M)\) and solves
\[\partial_t p(t,x,y)=\Delta_x p(t,x,y).\]
\end{proposition}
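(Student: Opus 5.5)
The plan is to construct \(p(t,x,y)\) from the spectral semigroup \(P_t=e^{t\Delta}\) on \(L^2(M)\), using local elliptic regularity, and then read off each listed property from the corresponding operator property of \(P_t\). This follows the route of \cite[Chapter 7]{grigoryan2009heat}.

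\textbf{Step 1 (existence, \eqref{eq:heat-kernel-representation} and smoothness).} For \(f\in L^2(M)\) and \(t>0\), the spectral theorem gives \(P_tf\in\operatorname{Dom}((-\Delta)^k)\) for every \(k\), with
\[
\|(-\Delta)^kP_tf\|_{L^2}\le \sup_{\lambda\ge0}\lambda^ke^{-t\lambda}\,\|f\|_{L^2}\le C_{k,t}\|f\|_{L^2}.
\]
Since \(-\Delta\) is the closure of the operator on \(C_c^\infty(M)\) (Gaffney--Chernoff), \((-\Delta)^kP_tf\) coincides with the distributional iterate \(\Delta^kP_tf\). Local elliptic regularity in a chart \(U\) together with the Sobolev embedding then shows that \(P_tf\) is smooth. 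It also gives
\[
\sup_K|\nabla^mP_tf|\le C_{K,m,t}\|f\|_{L^2}
\]
on every compact \(K\subset U\).

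In particular, \(f\mapsto P_tf(x)\) is a bounded functional on \(L^2(M)\). The Riesz representation theorem yields \(p_x^t\in L^2(M)\) with \(P_tf(x)=\langle p^t_x,f\rangle\), and we set \(p(t,x,y):=p^t_x(y)\).

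To get symmetry and joint smoothness, I would write
\[
p(t,x,y)=\langle p^{t/2}_x,p^{t/2}_y\rangle=P_{t/2}p^{t/2}_y(x).
\]
This uses the self-adjointness of \(P_{t/2}\) and the identity \(P_t=P_{t/2}P_{t/2}\). Symmetry is then immediate. Applying the regularity estimate in \(x\) and in \(y\), and in \(t\) through \(\partial_t^j P_t=\Delta^jP_t\), gives \(p\in C^\infty((0,\infty)\times M\times M)\) with \(\partial_tp=\Delta_xp\). The Chapman--Kolmogorov identity \eqref{eq:heat-kernel-semigroup} is the kernel form of \(P_{t+s}=P_tP_s\), read pointwise thanks to continuity.

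\textbf{Step 2 (sub-Markov property).} The quadratic form \(\int|\nabla u|^2\) is a Dirichlet form: it is the closure from \(C_c^\infty(M)\) and is stable under normal contractions \(u\mapsto(0\vee u)\wedge1\). By the Beurling--Deny criterion, \(0\le f\le1\) implies \(0\le P_tf\le1\). This gives \(p\ge0\), and applying it to \(f=\mathbf 1_K\) with \(K\uparrow M\) gives \(\int_Mp(t,x,y)\,dV_g(y)\le1\) by monotone convergence. The equality case is precisely the definition of stochastic completeness, \(P_t\mathbf 1=\mathbf 1\).

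\textbf{Step 3 (strict positivity) — the main obstacle.} Fix \(y\) and consider \(u(t,x)=p(t,x,y)\). It is a nonnegative smooth solution of the heat equation, and it is not identically zero for small \(t\), since \(\|p^{t/2}_y\|^2=p(t,y,y)\to\infty\) as \(t\to0\) by the local Euclidean behaviour.

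Suppose \(u(t_0,x_0)=0\) for some \(t_0>0\). The strong parabolic minimum principle, in the form of the local parabolic Harnack inequality in charts, forces \(u\equiv0\) on \((0,t_0]\times\Omega\) for a neighbourhood \(\Omega\) of \(x_0\). A chain-of-balls argument along paths, using that \(M\) is connected, extends this to \(u\equiv0\) on \((0,t_0]\times M\), which contradicts the blow-up at \(y\).

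The delicate point is propagating the zero backward in time across all of \(M\). Harnack only controls earlier times in a fixed ball, so the chain must be set up with decreasing time levels, and any small time can be reached by such a chain. Alternatively, I would compare with the Dirichlet heat kernels \(p_\Omega\le p\) on an exhaustion: positivity of \(p_\Omega\) on connected relatively compact \(\Omega\) follows from the irreducibility of the Dirichlet Laplacian there, which has a positive ground state, and this gives \(p>0\) directly.
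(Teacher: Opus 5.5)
Your proposal is correct and takes essentially the same approach as the paper. The paper gives no proof of its own and only cites \cite[Theorem 7.13]{grigoryan2009heat}, and your steps reproduce Grigor'yan's route: the Riesz representation of $f\mapsto P_tf(x)$ via local elliptic regularity, symmetry and joint smoothness from $p(t,x,y)=\langle p^{t/2}_x,p^{t/2}_y\rangle$, the sub-Markov bounds via Beurling--Deny, and strict positivity via the strong minimum principle with a backward-in-time Harnack chain, which Grigor'yan treats in his Chapter~8 rather than in Theorem~7.13. The only soft spot is your fallback via Dirichlet exhaustions: a positive ground state alone gives $p_\Omega(t,x,y)>0$ only for large $t$, so you would still need positivity improvement for every $t>0$ (e.g.\ from the strong minimum principle itself), but your primary Harnack-chain argument already suffices.
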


In particular, for every \(t>0\) and every \(x,y\in M\),
\[
p(t,x,\cdot)\in L^2(M),
\qquad
p(t,\cdot,y)\in L^2(M),
\]
and by symmetry and the semigroup identity,
\begin{equation}\label{eq:heat-kernel-L2}
\|p(t,x,\cdot)\|_{L^2(M)}^2
=
\int_M p(t,x,z)^2\,dV_g(z)
=
p(2t,x,x).
\end{equation}

We next record, for later use, the large-time asymptotic behavior and the logarithmic small-time asymptotic of the heat kernel, the latter being the classical Varadhan formula.

\begin{proposition}\cite[Chapter 10.8]{grigoryan2009heat}\label{boundp}
Let
\[
\lambda_0(M):=\inf\sigma(-\Delta)\ge0.
\]
Then for all \(x,y\in M\) and all \(t\ge s>0\),
\begin{equation}\label{eq:heat-kernel-exp-bound}
p(t,x,y)\le \sqrt{p(s,x,x)\,p(s,y,y)}\,e^{-\lambda_0(M)(t-s)}.
\end{equation}
\end{proposition}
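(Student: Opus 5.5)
The estimate \eqref{eq:heat-kernel-exp-bound} follows from the spectral theorem together with the semigroup identity \eqref{eq:heat-kernel-semigroup} and the Cauchy--Schwarz inequality. For $t\ge s>0$ I first split the time as $t=s/2+(t-s)+s/2$. Applying \eqref{eq:heat-kernel-semigroup} twice gives
\[
p(t,x,y)=\int_M\!\int_M p(s/2,x,z)\,p(t-s,z,w)\,p(s/2,w,y)\,dV_g(z)\,dV_g(w).
\]
Setting $f_x:=p(s/2,x,\cdot)$ and $f_y:=p(s/2,y,\cdot)$, both of which lie in $L^2(M)$ by \eqref{eq:heat-kernel-L2}, and using the kernel representation \eqref{eq:heat-kernel-representation}, this reads
\[
p(t,x,y)=\langle f_x,\,P_{t-s}f_y\rangle_{L^2(M)}.
\]
The case $t=s$ is included, since there $P_0=\mathrm{id}$ and the identity reduces to \eqref{eq:heat-kernel-semigroup} with times $s/2,s/2$.

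Next I bound the operator norm of $P_{t-s}=e^{(t-s)\Delta}$ on $L^2(M)$. By the spectral theorem,
\[
\|P_{t-s}\|_{L^2\to L^2}=\sup_{\lambda\in\sigma(-\Delta)}e^{-\lambda(t-s)}=e^{-\lambda_0(M)(t-s)},
\]
because $\sigma(-\Delta)\subset[\lambda_0(M),\infty)$ and $t-s\ge0$. The Cauchy--Schwarz inequality then yields
\[
p(t,x,y)\le \|f_x\|_{L^2}\,\|f_y\|_{L^2}\,e^{-\lambda_0(M)(t-s)}.
\]
Finally, \eqref{eq:heat-kernel-L2} applied with time $s/2$ gives
\[
\|f_x\|_{L^2}^2=p(s,x,x),\qquad \|f_y\|_{L^2}^2=p(s,y,y),
\]
which is exactly \eqref{eq:heat-kernel-exp-bound}.

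The only step requiring care is the first one: justifying that the double integral equals the $L^2$ pairing with the semigroup, with the time split placed correctly. This needs $f_y\in L^2(M)$, so that $P_{t-s}f_y$ is defined by \eqref{eq:heat-kernel-representation} pointwise as $\int_M p(t-s,\cdot,w)f_y(w)\,dV_g(w)$. It also needs Fubini, which is legitimate because all the integrands are nonnegative. Everything else is routine.
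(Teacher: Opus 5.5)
Your proof is correct: the splitting $t=s/2+(t-s)+s/2$, the spectral bound $\|P_{t-s}\|_{L^2\to L^2}\le e^{-\lambda_0(M)(t-s)}$, Cauchy--Schwarz, and $\|p(s/2,x,\cdot)\|_{L^2}^2=p(s,x,x)$ give exactly the estimate, and your handling of the case $t=s$ and of Tonelli is sound. The paper gives no proof of its own and only cites Grigor'yan's book, whose argument also combines the semigroup identity, the spectral bound on $\|P_\tau\|$, and Cauchy--Schwarz, so your route is essentially the standard one.
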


\begin{proposition}\cite[Theorem 10.24]{grigoryan2009heat}\label{prop:heat-kernel-long-time}
Let \((M,g)\) be a complete Riemannian manifold, and let
\[
\lambda_0(M):=\inf \sigma(-\Delta).
\]
Then, for every \(x,y\in M\),
\[
\lim_{t\to\infty}\frac{\log p(t,x,y)}{t}=-\lambda_0(M).
\]
\end{proposition}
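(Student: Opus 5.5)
The statement is Grigor'yan's Theorem 10.24: for all \(x,y\in M\), \(t^{-1}\log p(t,x,y)\to-\lambda_0(M)\) as \(t\to\infty\). I would prove it by establishing a matching upper bound and lower bound.

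\emph{Upper bound.} This follows directly from Proposition~\ref{boundp}. Fix \(s=1\) in \eqref{eq:heat-kernel-exp-bound}. Taking logarithms and dividing by \(t\) gives
\[
\frac{\log p(t,x,y)}{t}\le \frac{\log\sqrt{p(1,x,x)p(1,y,y)}+\lambda_0(M)}{t}-\lambda_0(M).
\]
Hence \(\limsup_{t\to\infty} t^{-1}\log p(t,x,y)\le-\lambda_0(M)\).

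\emph{Lower bound, diagonal case.} Set \(f=p(1,x,\cdot)\in L^2(M)\), which is legitimate by \eqref{eq:heat-kernel-L2}. By the semigroup identity \eqref{eq:heat-kernel-semigroup} and the spectral theorem,
\[
p(2t+2,x,x)=\|P_t f\|_{L^2}^2=\int_{[\lambda_0,\infty)}e^{-2t\lambda}\,d\langle E(\lambda)f,f\rangle .
\]
The lower bound \(\liminf t^{-1}\log p(t,x,x)\ge-\lambda_0\) then holds provided \(\lambda_0\) lies in the support of the spectral measure \(\mu_f=d\langle E f,f\rangle\). By Jensen or elementary estimates, if \(\mu_f([\lambda_0,\lambda_0+\varepsilon])>0\) for every \(\varepsilon>0\), then \(\liminf\ge-2(\lambda_0+\varepsilon)\) per unit of \(2t\), as required.

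The main obstacle is this support condition for the specific vector \(f=p(1,x,\cdot)\), not just for some vector. I would argue by contradiction. Suppose \(E([\lambda_0,\lambda_0+\varepsilon))f=0\). Then for every \(g\in C_c^\infty\), the pairing \(\langle E([\lambda_0,\lambda_0+\varepsilon))g, P_1\delta_x\rangle\) vanishes. The function \(h=E([\lambda_0,\lambda_0+\varepsilon))g\) is an \(L^2\) function with \(P_1h(x)=0\). Varying the point, I would use that \(p(1,z,\cdot)\) for \(z\) near \(x\) spans a dense set in a suitable sense, via analyticity in time and the strong parabolic maximum principle, to conclude that \(E([\lambda_0,\lambda_0+\varepsilon))=0\). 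That contradicts \(\lambda_0\in\sigma(-\Delta)\).

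A cleaner alternative route uses a positive \(\lambda_0\)-harmonic function \(\phi\), that is \(\Delta\phi+\lambda_0\phi=0\) with \(\phi>0\), whose existence is known on complete manifolds. With it one proves \(\int p(t,x,y)\phi(y)\,dy\ge\ldots\) type bounds. I would try the spectral argument first.

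\emph{Off-diagonal case.} Connect to the diagonal via the parabolic Harnack inequality on a compact neighbourhood containing \(x\) and \(y\). This gives \(p(t,x,y)\ge c\,p(t-1,x,x)\) for \(t\ge 2\), where \(c\) depends only on the compact set. Its logarithm contributes nothing after division by \(t\), so the diagonal lower bound transfers to \(p(t,x,y)\).

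Combining the \(\limsup\) and \(\liminf\) bounds gives the limit \(-\lambda_0(M)\).
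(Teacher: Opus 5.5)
The gap is exactly at the step you flagged as the main obstacle. Your upper bound (Proposition~\ref{boundp} with $s=1$) and your Harnack reduction of the off-diagonal case to the diagonal are fine.

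The paper gives no proof of its own; it cites Grigor'yan. His lower bound avoids spectral measures of point evaluations altogether. For a connected relatively compact $\Omega\ni x,y$ one has $p(t,x,y)\ge p^{\Omega}(t,x,y)=\sum_k e^{-\lambda_k(\Omega)t}\varphi_k(x)\varphi_k(y)$ with $\varphi_1>0$. Hence $\liminf_{t\to\infty}t^{-1}\log p(t,x,y)\ge-\lambda_1(\Omega)$, and $\lambda_1(\Omega)\downarrow\lambda_0(M)$ along an exhaustion.

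In your route, set $E_\varepsilon:=E([\lambda_0,\lambda_0+\varepsilon))$. The hypothesis $E_\varepsilon p(1,x,\cdot)=0$ carries information only at the single point $x$. It says $P_1h(x)=0$ for every $h\in\operatorname{Ran}E_\varepsilon$, and analyticity in $t$ yields at most $h(x)=0$ as well. Neither analyticity nor the qualitative strong maximum principle lets you pass from $x$ to nearby points $z$.

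A spectral subspace all of whose elements vanish at one point is also not a contradiction by itself. On a compact manifold with a simple eigenvalue $\lambda_2>\lambda_0$, one has $E(\{\lambda_2\})p(1,x,\cdot)=e^{-\lambda_2}\varphi_2(x)\varphi_2=0$ whenever $x$ lies on the nodal set of $\varphi_2$. So the argument must use what is special about the bottom of the spectrum, namely positivity, and in quantitative form.

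The missing ingredient is the local parabolic Harnack inequality, which you already use off the diagonal. Since $\lambda_0\in\sigma(-\Delta)$, we have $E_\varepsilon\neq0$, so by density there is $\psi\in C_c^\infty(M)$ with $E_\varepsilon\psi\neq0$. Splitting $\psi=\psi^+-\psi^-$ gives a compactly supported $0\le g\in L^2(M)$ with $c:=\|E_\varepsilon g\|_{L^2}^2>0$. Hence $\langle P_\tau g,g\rangle\ge c\,e^{-(\lambda_0+\varepsilon)\tau}$.

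Now chain Harnack on a compact connected $K\supset\operatorname{supp}g\cup\{x\}$, first in one variable and then, by symmetry, in the other. This gives $p(\tau,z,w)\le C_K\,p(\tau+2,x,x)$ for $z,w\in K$ and $\tau\ge1$. Therefore $\langle P_\tau g,g\rangle\le C_K\|g\|_{L^1}^2\,p(\tau+2,x,x)$, and so $\liminf_{t\to\infty}t^{-1}\log p(t,x,x)\ge-(\lambda_0+\varepsilon)$.

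Your fallback via a positive $\phi$ with $\Delta\phi+\lambda_0\phi=0$ points the wrong way. Minimality of the heat semigroup gives $P_t\phi\le e^{-\lambda_0 t}\phi$, which is an upper bound, not the lower bound you need.
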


\begin{proposition} \cite[Eq. 15.45]{grigoryan2009heat} \cite{varadhan1967behavior}
\label{prop:varadhan-heat-kernel}
Let \((M,g)\) be a Riemannian manifold.
Then, for every \(x,y\in M\),
\[
\lim_{t\rightarrow0} 4t\,\log p(t,x,y)=-d_g(x,y)^2.
\]
where \(d_g(x,y)\) denotes the geodesic distance between \(x\) and \(y\) on \(M\).
\end{proposition}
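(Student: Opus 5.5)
This is the Varadhan small-time asymptotic, \(\lim_{t\to0}4t\log p(t,x,y)=-d_g(x,y)^2\). It is quoted from \cite{varadhan1967behavior} and \cite[Eq. 15.45]{grigoryan2009heat}, so the plan below only sketches the standard argument. We split into an upper bound and a lower bound on \(\log p(t,x,y)\).

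\textbf{Upper bound.} We aim to show
\[
\limsup_{t\to0}4t\log p(t,x,y)\le -d_g(x,y)^2 .
\]
\begin{enumerate}
\item[1.] Fix small balls \(B_x=B(x,r)\) and \(B_y=B(y,r)\).
\item[2.] Apply the integrated maximum principle (Davies--Gaffney estimate). For \(f,h\) supported in \(B_x,B_y\), it gives
\[
|\langle P_tf,h\rangle|\le e^{-\rho^2/4t}\,\|f\|_{L^2}\,\|h\|_{L^2},
\qquad \rho=d(B_x,B_y).
\]
\item[3.] Convert this \(L^2\) bound into a pointwise bound. Use the semigroup identity \eqref{eq:heat-kernel-semigroup}, writing \(p(t,x,y)\) via times \(\varepsilon t,(1-2\varepsilon)t,\varepsilon t\). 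Combine this with local on-diagonal bounds \(p(\varepsilon t,x,x)\le Ct^{-n/2}\), which come from local parabolic regularity in a chart.
\item[4.] This gives \(p(t,x,y)\le C t^{-N}e^{-(d-2r)^2/4t(1+\delta)}\). Letting \(r,\delta\to0\) yields the upper bound.
\end{enumerate}

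\textbf{Lower bound.} We aim to show
\[
\liminf_{t\to0}4t\log p(t,x,y)\ge -d^2 .
\]
\begin{enumerate}
\item[1.] Since \(M\) is complete, choose a minimizing geodesic from \(x\) to \(y\).
\item[2.] Cover the geodesic by \(k\) small balls with consecutive centers at distance \(d/k\).
\item[3.] By domain monotonicity, \(p\ge p_\Omega\) for a relatively compact neighbourhood \(\Omega\). In a single small ball the local Gaussian lower bound \(p_\Omega(\tau,z,w)\ge c\tau^{-n/2}e^{-(1+\delta)d(z,w)^2/4\tau}\) holds for \(\tau\) small; this follows by comparison with Euclidean geometry in normal coordinates.
\item[4.] Chain the \(k\) balls via the semigroup identity with time steps \(t/k\). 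The exponents add to
\[
k\cdot\frac{(d/k)^2(1+\delta)}{4t/k}=\frac{(1+\delta)d^2}{4t},
\]
and the prefactors are only polynomial in \(t\).
\end{enumerate}

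The main obstacle is the lower bound: obtaining the near-diagonal Gaussian lower bound with sharp constant \(1+\delta\), uniformly over the balls covering the geodesic. Everything is local, so completeness only serves to guarantee the minimizing geodesic; for noncomplete \(M\) one replaces it with near-minimizing curves. Given that the statement is classical, in the paper we would simply cite \cite{varadhan1967behavior} and \cite{grigoryan2009heat}.
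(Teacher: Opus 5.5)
The paper gives no argument for this proposition; it only cites Varadhan and Grigor'yan, and your proposal ends up doing the same. Read as a proof sketch, your architecture is the standard one: Davies--Gaffney for the upper bound, and chaining along a (near-)minimizing curve for the lower bound. However, Step~3 of the upper bound does not close as written. Davies--Gaffney only applies to functions supported in $B_x$ and $B_y$, and $p(\varepsilon t,x,\cdot)$, $p(\varepsilon t,\cdot,y)$ are not supported there. After truncating, you are left with cross terms such as
\[
\int_{M\setminus B_x}p(\varepsilon t,x,z)\,p((1-\varepsilon)t,z,y)\,dV_g(z)
\le\Bigl(\int_{M\setminus B_x}p(\varepsilon t,x,z)^2\,dV_g(z)\Bigr)^{1/2}p(2(1-\varepsilon)t,y,y)^{1/2}.
\]
On-diagonal bounds at $x$ and $y$ say nothing about how much $L^2$-mass of $p(\varepsilon t,x,\cdot)$ lies outside $B_x$. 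The missing ingredient is an off-diagonal tail estimate, e.g.\ $\int_{M\setminus B(x,r)}p(\tau,x,z)^2\,dV_g(z)\le C\tau^{-n/2}e^{-r^2/(D\tau)}$ for small $\tau$ and some $D>2$. This follows, for instance, from a bound on Grigor'yan's weighted integral $\int_M p(\tau,x,z)^2e^{d(x,z)^2/(D\tau)}\,dV_g(z)$. With it, the cross terms are $O(t^{-n/2}e^{-r^2/(2D\varepsilon t)})$, which is negligible against $e^{-d^2/4t}$ once $\varepsilon<2r^2/(Dd^2)$.

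A simpler repair is to use local parabolic regularity as a mean value inequality in both variables, on cylinders of spatial radius $\sqrt{\varepsilon t}$, rather than only for on-diagonal bounds. This bounds $p(t,x,y)$ by $Ct^{-N}$ times time averages, over $\sigma\in[(1-2\varepsilon)t,t]$, of $\langle P_\sigma\mathbf 1_{B(x,\sqrt{\varepsilon t})},\mathbf 1_{B(y,\sqrt{\varepsilon t})}\rangle$. Davies--Gaffney applies to these directly with $\rho\ge d-2\sqrt{\varepsilon t}$, so no tail control is needed.

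Your lower bound is sound in outline, and you correctly locate the crux. Make sure, though, that ``comparison with Euclidean geometry in normal coordinates'' means a barrier argument and not coefficient freezing or a truncated Minakshisundaram--Pleijel parametrix. Those give additive errors that are only polynomially small in $\tau$, so they are not small relative to $e^{-h^2/4\tau}$ at a fixed distance $h$; they give the sharp bound only near the diagonal, roughly $d(z,w)\lesssim\sqrt\tau$. At fixed small distance you can use the subsolution $\tau^{-a}\bigl(e^{-(1+\delta)r^2/4\tau}-e^{-(1+\delta)h^2/4\tau}\bigr)$ on $B(z,h)$, where $r=d(z,\cdot)$, $h$ is below the injectivity radius, and $a=\frac n2(1+\delta)+O(h^2)$. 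It works because $|\nabla r|=1$ and $\Delta r^2=2n+O(r^2)$, and you compare it with $p_\Omega$ by the maximum principle after a short initial time. Alternatively, keep only the near-diagonal sharp bound and chain with $k\asymp d^2/(A^2t)$ steps, letting $A\to\infty$ at the end.
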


\subsection{Equivalent Definitions of the Fractional Laplacian}\label{Equiva}

Since $-\Delta$ is a unique self-adjoint realization on $L^2(M)$, by the spectral theorem there is a unique projection valued measure
	\(
	E
	\)
	supported on \([0,\infty)\) such that (\cite{conway2017course} or \cite[Appendix A]{keller2021graphs})
	\[
	-\Delta=\int_{[0,\infty)}\lambda\, dE(\lambda),\quad \operatorname{Dom}(-\Delta):=\left\{f\in L^2(M),-\Delta f\in L^2(M)\right\}
	\]
	i.e.\ for any $f\in\operatorname{Dom}(-\Delta)$ and $g\in L^{2}(M)$,
	\[
	\langle -\Delta f,g\rangle_{L^{2}(M)}
	=\int_{[0,\infty)}\lambda\,dE_{f,g}(\lambda).
	\]
	For each pair \(f,g\in L^2(M)\), $E_{f,g}$ is a regular complex Borel measure of bounded variation on \([0,\infty)\), supported on \(\sigma(-\Delta)\),  where
	\[
	E_{f,g}(B)\;=\;\bigl\langle E(B)\,f,\;g\bigr\rangle_{L^2(M)},\:\:E\left(\sigma\left(-\Delta\right)\right)=I \quad\text{where}\:\:
	B\subset[0,\infty)\text{ is Borel set}
	\]
	and satisfying
	\[
	\bigl|E_{f,g}\bigr|\bigl([0,\infty)\bigr)
	\;\le\;\|f\|_{L^2(M)}\,\|g\|_{L^2(M)}, 
	\]
	Moreover, $E_{f,f}$ is a positive measure with $E_{f,f}\bigl([0,\infty)\bigr)=\|f\|_{L^2(M)}.$
	
In particular, for \(s\in(0,1)\), the fractional Laplacian is defined through the spectral calculus by
\[
(-\Delta)^s:=\int_{[0,\infty)} \lambda^s\, dE(\lambda).
\]
with
\[
\operatorname{Dom}((-\Delta)^s)
:=
\left\{
f\in L^2(M):
\int_{[0,\infty)}\lambda^{2s}\, dE_{f,f}(\lambda) <\infty
\right\}=:H^{2s}(M)..
\]
The action rule
\[
	\bigl\langle(-\Delta)^sf,g\bigr\rangle_{L^{2}(M)}
	=\int_{[0,\infty)}\lambda^s\,dE_{f,g}(\lambda) , \quad f\in \operatorname{Dom}\bigl((-\Delta)^s\bigr),\,g \in L^{2}(M)
	\]
	and
	\[||\left(-\Delta\right)^sf||_{L^{2}(M)}^2=\int_{[0,\infty)}\lambda^{2s}\,dE_{f,f}(\lambda) . \]
We equip $H^{2s}(M)$ with the inner product
\[
\langle f,g\rangle_{H^{2s}(M)}
:=
\langle f,g\rangle_{L^2(M)}
+
\bigl\langle (-\Delta)^s f,\,(-\Delta)^s g\bigr\rangle_{L^2(M)},
\]
and the corresponding norm
\[
\|f\|_{H^{2s}(M)}
=
\left(
\|f\|_{L^2(M)}^2
+
\|(-\Delta)^s f\|_{L^2(M)}^2
\right)^{1/2}.
\]
By \cite[Proposition 2.1]{chen2025logarithmic}, endowed with this inner product, $H^{2s}(M)$ is a Hilbert space. By \cite[Theorem 4.3]{strichartz1983analysis}, $C_c^\infty(M)$ is dense in $H^{2s}(M)$.

By Section~\ref{Introduction}, we know that the fractional Laplacian also admits a Bochner integral representation. We now turn to its pointwise singular integral representation, for which stochastic completeness is essential.

For $0<s<1$, since $M$ is complete Riemannian manifold, by \cite[Theorem 11.1]{li2012geometric} and Proposition \ref{boundp},  for every fixed $x\neq y$ there exists $t_0=t_0(x,y)>0$ and a constant $C_{x,y}>0$ such that
\[
p(t,x,y)\le C_{x,y}\, t,
\qquad 0<t\le t_0,
\]
and that the function $t\mapsto p(t,x,y)$ is bounded on $[1,\infty)$. Thus, $K_s(x,y)$ is well-defined.
 By the Bochner formula,
\[
(-\Delta)^s f(x)
=
\frac{s}{\Gamma(1-s)}
\int_0^\infty
\left(
f(x)-\int_M p(t,x,y)f(y)\,dV_g(y)
\right)\frac{dt}{t^{1+s}}.
\]
If, in addition, $(M,g)$ is stochastically complete, equivalently,
\[
\int_M p(t,x,y)\,dV_g(y)=1,
\qquad t>0,\ x\in M,
\]
then 
\begin{equation}\label{fractionintegr1}
    (-\Delta)^s f(x)
=
\frac{s}{\Gamma(1-s)}
\int_0^\infty \int_M
\left(
f(x)- f(y)\right)p(t,x,y)\,dV_g(y)
\frac{dt}{t^{1+s}}.
\end{equation}
For convenience, set
\[
I_t(x):=\int_M \bigl(f(x)-f(y)\bigr)p(t,x,y)\,dV_g(y).
\]
Assume moreover that $f\in C_c^{\infty}(M),$ by Theorem \ref{thm:L1-core-stochastic-completeness} and \cite[Lemma 6.1.11]{davies2007linear}, we have
\[
\frac{d}{d\tau}e^{\tau\Delta}f=e^{\tau\Delta}\Delta f\quad \text{in}\quad L^1(M)
\]
holds, and therefore by \cite[Lemma 6.1.12 and Lemma 6.1.13]{davies2007linear}, we obtain
\[
I_t(x)
=
-\int_0^t \frac{d}{d\tau}\bigl(e^{\tau\Delta}f(x)\bigr)\,d\tau
=
-\int_0^t e^{\tau\Delta}(\Delta f)(x)\,d\tau.
\]
Since the heat semigroup is positivity preserving and \(L^\infty\)-contractive, we obtain
\[
|I_t(x)|
\le \int_0^t \bigl|e^{\tau\Delta}(\Delta f)(x)\bigr|\,d\tau
\le \int_0^t \|\Delta f\|_{L^\infty(M)}\,d\tau
= t\,\|\Delta f\|_{L^\infty(M)}.
\]
In particular,
\[
\frac{|I_t(x)|}{t^{1+s}}
\le \|\Delta f\|_{L^\infty(M)}\,t^{-s},
\qquad 0<t\le 1,
\]
which is integrable near \(t=0\) since \(0<s<1\). On the other hand, since $f\in L^\infty(M)$ and
\[
\int_M p(t,x,y)\,dV_g(y)\le1,
\]
we have
\[
|I_t(x)|
\le \int_M |f(x)-f(y)|\,p(t,x,y)\,dV_g(y)
\le 2\|f\|_{L^\infty(M)},
\]
so that
\[
\frac{|I_t(x)|}{t^{1+s}}
\le 2\|f\|_{L^\infty(M)}\,t^{-1-s},
\qquad t\ge 1,
\]
which is integrable at infinity. Thus, \eqref{fractionintegr1} is well defined for every \(x\in M\) and \(f\in C_c^\infty(M)\). By \cite[Proposition 7.5]{caselli2023asymptotics}, one has the absolute integrability condition
\[
\int_0^\infty \int_M
|f(x)-f(y)|\,p(t,x,y)\,dV_g(y)\,\frac{dt}{t^{1+s}}
<\infty.
\]
Therefore, Fubini's theorem applies and yields
\[
(-\Delta)^s f(x)
=
\int_M \bigl(f(x)-f(y)\bigr)K_s(x,y)\,dV_g(y).
\]

Estimates for \(K_s(x,y)\) depend on the behavior of the heat kernel \(p(t,x,y)\), and are therefore closely related to the geometric properties of the underlying manifold. In particular, in the hyperbolic space, one has \cite[Proposition 1.11]{chen2025logarithmic} 
\[K_s(x,y)\sim r^{-n-2s}\,\,\text{as}\,\,r\rightarrow0;\quad K_s(x,y)\sim r^{-1-s}e^{-(n-1)r}\,\,\text{as}\,\, r\rightarrow \infty\quad r:=d_g(x,y).\]

\subsection{Subordinate Semigroup Associated with the Fractional Laplacian}\label{subordinate}

For $0<s<1$, the subordinate semigroup in $L^2(M)$ generated by $-(-\Delta)^s$ is given by
\[
T_t^{(s)}:=e^{-t(-\Delta)^s},\qquad t\ge0.
\]
By Bochner's subordination formula, see \cite[Proposition 13.1]{schilling1998subordination} and \cite[Section 5.4]{grigor2003heat}, there exists a family of probability densities
\[
\eta_t^{(s)}(\tau)\ge0,\qquad \tau>0,\ t>0,
\]
such that
\begin{equation}\label{integralres}
    e^{-t(-\Delta)^s}f
=
\int_0^\infty \eta_t^{(s)}(\tau)\,e^{\tau \Delta}f\,d\tau,
\qquad f\in L^2(M),
\end{equation}
where the integral is understood in the Bochner sense. Equivalently,
\begin{equation}\label{laplacets}
    \int_0^\infty e^{-\tau\lambda}\eta_t^{(s)}(\tau)\,d\tau
=
e^{-t\lambda^s},
\qquad \lambda\ge0.
\end{equation}
In particular,
\[
\int_0^\infty \eta_t^{(s)}(\tau)\,d\tau=1,
\qquad t>0.
\]
Moreover, for every $0<s<1$, the density $\eta_t^{(s)}$ enjoys the scaling property
\begin{equation}\label{scaling pro}
    \eta_t^{(s)}(\tau)
=
\frac{1}{t^{1/s}}\,
\eta_1^{(s)}\!\left(\frac{\tau}{t^{1/s}}\right),
\qquad \tau>0,\ t>0,
\end{equation}
and satisfies the estimates
\begin{equation}\label{tailinte}
    \eta_t^{(s)}(\tau)\le C\,\frac{t}{\tau^{1+s}},
\qquad \forall\,\tau,t>0,
\end{equation}
where $C$ is a positive constant independent of $t,\tau$. Furthermore, as $\tau\to0^+$,
$\eta_1^{(s)}(\tau)$ decays exponentially fast:
\begin{equation}\label{eta1s}
    0\le \eta_1^{(s)}(\tau)\le C \tau^{-1-s}e^{-\tau^{-s}},\quad \tau>0.
\end{equation}
See, for instance, \cite[Theorem 3.1]{bogdan2003harnack} and \cite[Section 5.4]{grigor2003heat}.

The fractional heat kernel associated with \((-\Delta)^s\), equivalently the heat kernel of the subordinate semigroup \(e^{-t(-\Delta)^s}\), is given by
\begin{equation}\label{subordination formula}
    p_t^{(s)}(x,y)
:=
\int_0^\infty \eta_t^{(s)}(\tau)\,p(\tau,x,y)\,d\tau,
\end{equation}
see \cite[Section 5.4]{grigor2003heat}. The subordinate semigroup $T_t^{(s)}$ also admits an integral kernel $p_t^{(s)}(x,y)$, see \cite{bogdan2003harnack,grigor2003heat,grigoryan2009heat,dong2023time}, given by
\[
T_t^{(s)}f(x)=\int_M p_t^{(s)}(x,y)f(y)\,dV_g(y),
\qquad f\in L^2(M).
\]
Indeed, by the symmetry of the fractional heat kernel and the semigroup property, we have
\[
\int_M p_t^{(s)}(x,y)^2\,dV_g(y)
=
\int_M p_t^{(s)}(x,y)\,p_t^{(s)}(y,x)\,dV_g(y)
=
p_{2t}^{(s)}(x,x).
\]

\section{Analytic Properties of the Fractional Laplacian}
In this section, we study several analytic and probabilistic aspects of the fractional Laplacian on complete Riemannian manifolds, including the \(L^p\)-theory and smoothing properties of the subordinate semigroup, asymptotic behavior of the fractional heat kernel, jump probabilities of the associated Markov process, and the variational characterization of the fractional resolvent.

\subsection{Contraction and Regularity of the Subordinate Semigroup}
In this subsection, we prove some basic \(L^p\)-contractivity and smoothing properties of the subordinate semigroup \(\{T_t^{(s)}\}_{t\ge0}\), which will be used repeatedly in the sequel.

The subordinate semigroup inherits both the \(L^p\)-contractivity of the heat semigroup and, for \(1\le p<\infty\), the corresponding strong continuity.

\begin{proof}[\textbf{Proof of Proposition \ref{prop:fractional-semigroup-Lp}.}]
The semigroup property on \(L^2(M)\) follows from the spectral definition:
\[
T_{t+\tau}^{(s)}
=
e^{-(t+\tau)(-\Delta)^s}
=
e^{-t(-\Delta)^s}e^{-\tau(-\Delta)^s}
=
T_t^{(s)}T_\tau^{(s)}.
\]
Since each \(T_t^{(s)}\) is bounded on \(L^p(M)\) by the contractive estimate established below, this identity extends from \(L^2(M)\cap L^p(M)\) to all of \(L^p(M)\).

We now prove the \(L^p\)-contractivity. For \(p=\infty\), by the subordination formula \eqref{integralres} and the \(L^\infty\)-contractivity of the heat semigroup,
\[
\|T_t^{(s)}f\|_{L^\infty(M)}
=
\left\|
\int_0^\infty \eta_t^{(s)}(\tau)\,P_\tau f\,d\tau
\right\|_{L^\infty(M)}
\le
\int_0^\infty \eta_t^{(s)}(\tau)\,\|P_\tau f\|_{L^\infty(M)}\,d\tau
\le
\|f\|_{L^\infty(M)}.
\]

Next, let \(f\in L^1(M)\). Using the subordination formula, Fubini's theorem, and the \(L^1\)-contractivity of the heat semigroup, we obtain
\[
\|T_t^{(s)}f\|_{L^1(M)}
\le
\int_0^\infty \eta_t^{(s)}(\tau)\,\|P_\tau f\|_{L^1(M)}\,d\tau
\le
\int_0^\infty \eta_t^{(s)}(\tau)\,\|f\|_{L^1(M)}\,d\tau
=
\|f\|_{L^1(M)}.
\]
The \(L^p\)-contractivity for \(1<p<\infty\) then follows by interpolation between \(L^1(M)\) and \(L^\infty(M)\). Therefore,
\[
\|T_t^{(s)}f\|_{L^p(M)}\le \|f\|_{L^p(M)}
\qquad\forall\,f\in L^p(M),\ \forall\,t\ge0,
\]
for every \(1\le p\le\infty\). Hence \(\{T_t^{(s)}\}_{t\ge0}\) is a contraction semigroup on \(L^p(M)\).

Let \(1\le p<\infty\) and \(f\in L^p(M)\), we prove the strongly continuous. By the subordination formula \eqref{integralres},
\[
T_t^{(s)}f-f
=
\int_0^\infty \eta_t^{(s)}(\tau)\,\bigl(P_\tau f-f\bigr)\,d\tau.
\]
Hence, by Minkowski's integral inequality,
\[
\|T_t^{(s)}f-f\|_{L^p(M)}
\le
\int_0^\infty \eta_t^{(s)}(\tau)\,\|P_\tau f-f\|_{L^p(M)}\,d\tau.
\]

Fix \(\varepsilon>0\). Since \(\{P_\tau\}_{\tau\ge0}\) is strongly continuous on \(L^p(M)\), see \cite[Theorem 7.19 and Exercise 7.37]{grigoryan2009heat}, there exists \(\delta>0\) such that
\[
\|P_\tau f-f\|_{L^p(M)}<\varepsilon
\qquad\text{for all }0<\tau<\delta.
\]
Therefore,
\[
\int_0^\delta \eta_t^{(s)}(\tau)\,\|P_\tau f-f\|_{L^p(M)}\,d\tau
\le
\varepsilon.
\]

On the other hand, by the \(L^p\)-contractivity of the heat semigroup,
\[
\|P_\tau f-f\|_{L^p(M)}
\le
\|P_\tau f\|_{L^p(M)}+\|f\|_{L^p(M)}
\le
2\|f\|_{L^p(M)}.
\]
Thus,
\[
\int_\delta^\infty \eta_t^{(s)}(\tau)\,\|P_\tau f-f\|_{L^p(M)}\,d\tau
\le
2\|f\|_{L^p(M)}
\int_\delta^\infty \eta_t^{(s)}(\tau)\,d\tau.
\]

Combining the above estimates, we obtain
\[
\|T_t^{(s)}f-f\|_{L^p(M)}
\le
\varepsilon
+
2\|f\|_{L^p(M)}
\int_\delta^\infty \eta_t^{(s)}(\tau)\,d\tau.
\]
By \eqref{tailinte}, we obtain for every fixed \(\delta>0\),
\[
\int_\delta^\infty \eta_t^{(s)}(\tau)\,d\tau\to0
\qquad\text{as }t\downarrow0.
\]
Hence
\[
\limsup_{t\rightarrow0}\|T_t^{(s)}f-f\|_{L^p(M)}\le \varepsilon.
\]
Since \(\varepsilon>0\) is arbitrary, it follows that
\[
\lim_{t\rightarrow0}\|T_t^{(s)}f-f\|_{L^p(M)}=0,
\]
we complete the proof.
\end{proof}

\begin{remark}
For \(1\le p<\infty\), let
\[
-(-\Delta)^s_p
\]
denote the infinitesimal generator of \(\{T_t^{(s)}\}_{t\ge0}\) on \(L^p(M)\). Namely,
\[
D\bigl((-\Delta)^s_p\bigr)
=
\left\{
f\in L^p(M):
\lim_{t\downarrow0}\frac{T_t^{(s)}f-f}{t}
\ \text{exists in }L^p(M)
\right\},
\]
and
\[
(-\Delta)^s_p f
=
-\lim_{t\downarrow0}\frac{T_t^{(s)}f-f}{t}.
\]
 Moreover, the \(L^p\)-generator is compatible with the spectral operator on \(L^2(M)\): if
\[
f\in D\bigl((-\Delta)^s_p\bigr)\cap D\bigl((-\Delta)^s\bigr),
\]
then
\[
(-\Delta)^s_p f = (-\Delta)^s f.
\]
When no confusion is likely to arise, we shall simply write \((-\Delta)^s\) in place of \((-\Delta)^s_p\).
\end{remark}

Next, we show that the subordinate semigroup inherits the smoothing effect of the heat semigroup and maps \(L^2(M)\) into \(C^\infty(M)\) for every positive time.

\begin{proof}[\textbf{Proof of Proposition \ref{lem:subordinate-semigroup-smoothing}.}]
Fix \(t>0\), a chart \(U\Subset M\), a compact set \(K\Subset U\), and an integer \(m\ge0\).
By \cite[Theorem 7.6]{grigoryan2009heat}, there exist constants \(C>0\) and \(\sigma>\frac m2+\frac n4\) such that
\[
\|P_\tau f\|_{C^m(K)}
\le
C(1+\tau^{-\sigma})\|f\|_{L^2(M)}
\qquad\forall\,\tau>0.
\]
Hence
\[
\int_0^\infty \eta_t^{(s)}(\tau)\,\|P_\tau f\|_{C^m(K)}\,d\tau
\le
C\|f\|_{L^2(M)}
\int_0^\infty \eta_t^{(s)}(\tau)(1+\tau^{-\sigma})\,d\tau.
\]
It remains to show that
\[
\int_0^\infty \eta_t^{(s)}(\tau)(1+\tau^{-\sigma})\,d\tau<\infty.
\]
By the scaling property of the subordination kernel, \(\eta_t^{(s)}\) satisfies the same type of estimate as \(\eta_1^{(s)}\). In particular, there exist constants \(c,C_t>0\) such that
\[
0\le \eta_t^{(s)}(\tau)\le C_t\,\tau^{-1-s}e^{-c\tau^{-s}}
\qquad\text{for }0<\tau\le1.
\]
Therefore,
\[
\int_0^1 \eta_t^{(s)}(\tau)(1+\tau^{-\sigma})\,d\tau<\infty,
\]
since \(e^{-c\tau^{-s}}\) decays faster than any power as \(\tau\downarrow0\).
On the other hand, since \(\eta_t^{(s)}\) is a probability density,
\[
\int_1^\infty \eta_t^{(s)}(\tau)(1+\tau^{-\sigma})\,d\tau
\le
2\int_1^\infty \eta_t^{(s)}(\tau)\,d\tau
\le 2.
\]
Thus
\[
\int_0^\infty \eta_t^{(s)}(\tau)(1+\tau^{-\sigma})\,d\tau<\infty.
\]
It follows that the Bochner integral defining \(T_t^{(s)}f\) converges in \(C^m(K)\), and
\[
\|T_t^{(s)}f\|_{C^m(K)}
\le
C_{K,U,m,t}\,\|f\|_{L^2(M)}.
\]
Since \(m\) is arbitrary, we conclude that \(T_t^{(s)}f\in C^\infty(M)\).
\end{proof}

\begin{remark}\label{rem:continuity-vs-smoothness-subordinate}
Let \(0<s<1\), \(t>0\), and \(f\in L^\infty(M)\).

First, the continuity of \(T_t^{(s)}f\) follows directly from the subordination formula
\[
T_t^{(s)}f(x)=\int_0^\infty \eta_t^{(s)}(\tau)\,P_\tau f(x)\,d\tau.
\]
Indeed, for every \(\tau>0\), the function \(P_\tau f\) is continuous on \(M\), and by the \(L^\infty\)-contractivity of the heat semigroup,
\[
\|P_\tau f\|_{L^\infty(M)}\le \|f\|_{L^\infty(M)}.
\]
Hence, if \(x_j\to x\) in \(M\), then for every \(\tau>0\),
\[
P_\tau f(x_j)\to P_\tau f(x),
\]
while
\[
|P_\tau f(x_j)|\le \|f\|_{L^\infty(M)}.
\]
Since \(\eta_t^{(s)}(\tau)\,d\tau\) is a probability measure on \((0,\infty)\), the dominated convergence theorem yields
\[
T_t^{(s)}f(x_j)\to T_t^{(s)}f(x).
\]
Therefore,
\[
f\in L^\infty(M)\quad\Longrightarrow\quad T_t^{(s)}f\in C(M)
\qquad\text{for every }t>0.
\]

On the other hand, the smoothness of \(T_t^{(s)}f\) requires more care. Formally, one would like to write
\[
\nabla_x^\alpha T_t^{(s)}f(x)
=
\int_0^\infty \eta_t^{(s)}(\tau)\,\nabla_x^\alpha P_\tau f(x)\,d\tau,
\]
but this interchange of differentiation and integration is not justified solely by the fact that
\(P_\tau f\in C^\infty(M)\) for each \(\tau>0\). One also needs suitable local derivative estimates for the heat semigroup, namely, for every compact set \(K\subset M\),
\[
\int_0^\infty
\eta_t^{(s)}(\tau)\,
\sup_{x\in K}\bigl|\nabla^\alpha P_\tau f(x)\bigr|
\,d\tau
<\infty.
\]
Thus, to prove that \(T_t^{(s)}f\in C^\infty(M)\), one needs quantitative control of the derivatives
of \(P_\tau f\) as \(\tau\downarrow0\), and not merely the fact that \(P_\tau f\) is smooth for each fixed \(\tau>0\).
\end{remark}

\subsection{Asymptotic Behavior of the Fractional Heat Kernel}

By subordination formula, the long-time exponential decay of the fractional heat kernel
is determined by the bottom of the spectrum of \((-\Delta)^s\), namely
\(\lambda_0(M)^s\).

\begin{proof}[\textbf{Proof of Proposition \ref{prop:fractional-heat-kernel-long-time}.}]
Fix \(x,y\in M\). 
We split the proof into the upper and lower bounds.

\medskip
\noindent\textbf{(i) Upper bound.}
Let \(\varepsilon>0\). By Proposition \ref{prop:heat-kernel-long-time},
\[
\lim_{\tau\to\infty}\frac{\log p(\tau,x,y)}{\tau}=-\lambda_0(M).
\]
Hence there exists \(\tau_\varepsilon>0\) such that
\[
p(\tau,x,y)\le e^{-(\lambda_0(M)-\varepsilon)\tau},
\qquad \forall\,\tau\ge \tau_\varepsilon.
\]
On the other hand, since \(p(\tau,x,y)\) is continuous and positive on the compact interval
\([0,\tau_\varepsilon]\), there exists a constant \(C_\varepsilon>0\) such that
\[
p(\tau,x,y)\le C_\varepsilon e^{-(\lambda_0(M)-\varepsilon)\tau},
\qquad \forall\,\tau\in[0,\tau_\varepsilon].
\]
Therefore,
\[
p(\tau,x,y)\le C_\varepsilon e^{-(\lambda_0(M)-\varepsilon)\tau},
\qquad \forall\,\tau>0.
\]
Using this in the subordination formula \eqref{subordination formula} and the Laplace transform identity \eqref{laplacets}, we get
\[
p_t^{(s)}(x,y)
\le
C_\varepsilon\int_0^\infty \eta_t^{(s)}(\tau)e^{-(\lambda_0(M)-\varepsilon)\tau}\,d\tau
=
C_\varepsilon e^{-t(\lambda_0(M)-\varepsilon)^s}.
\]
Thus
\[
\limsup_{t\to\infty}\frac1t\log p_t^{(s)}(x,y)
\le
-(\lambda_0(M)-\varepsilon)^s.
\]
Letting \(\varepsilon\downarrow0\), we obtain
\[
\limsup_{t\to\infty}\frac1t\log p_t^{(s)}(x,y)
\le
-\lambda_0(M)^s.
\]

\medskip
\noindent\textbf{(ii) Lower bound.}
Again let \(\varepsilon>0\). By Proposition \ref{prop:heat-kernel-long-time}, there exists
\(\tau_\varepsilon\in (0,\varepsilon^2)\) such that
\[
p(\tau,x,y)\ge e^{-(\lambda_0(M)+\varepsilon)\tau},
\qquad \forall\,\tau\ge \tau_\varepsilon.
\]
Hence
\[
p_t^{(s)}(x,y)
\ge
\int_{\tau_\varepsilon}^\infty \eta_t^{(s)}(\tau)p(\tau,x,y)\,d\tau
\ge
\int_{\tau_\varepsilon}^\infty \eta_t^{(s)}(\tau)e^{-(\lambda_0(M)+\varepsilon)\tau}\,d\tau.
\]
Therefore
\[
p_t^{(s)}(x,y)
\ge
\int_0^\infty \eta_t^{(s)}(\tau)e^{-(\lambda_0(M)+\varepsilon)\tau}\,d\tau
-
\int_0^{\tau_\varepsilon}\eta_t^{(s)}(\tau)e^{-(\lambda_0(M)+\varepsilon)\tau}\,d\tau.
\]
Using again the Laplace transform identity \eqref{laplacets}, we have
\[
p_t^{(s)}(x,y)
\ge
e^{-t(\lambda_0(M)+\varepsilon)^s}
-
\int_0^{\tau_\varepsilon}\eta_t^{(s)}(\tau)\,d\tau.
\]
Now we estimate the last term using the scaling property
\[
\eta_t^{(s)}(\tau)=t^{-1/s}\eta_1^{(s)}\!\left(\tau t^{-1/s}\right).
\]
A change of variables \(\tau=t^{1/s}r\) and \eqref{eta1s} yield
\[
\int_0^{\tau_\varepsilon}\eta_t^{(s)}(\tau)\,d\tau
=
\int_0^{t^{-1/s}\tau_\varepsilon} \eta_1^{(s)}(r)\,dr\le C\int_0^{t^{-1/s}\tau_\varepsilon} r^{-1-s}e^{-r^{-s}}\,dr.
\]
By the change of variables \(u=r^{-s}\), we obtain
\[
\int_0^{t^{-1/s}\tau_\varepsilon} r^{-1-s}e^{-r^{-s}}\,dr
=
\frac1s\int_{t\tau_\varepsilon^{-s}}^\infty e^{-u}\,du
=
\frac1s e^{-t\tau_\varepsilon^{-s}}\le \frac1s e^{-t\varepsilon^{-2s}}.
\]

Choose \(\varepsilon>0\) (independent of $t$) sufficiently small such that
\[
C\frac1s e^{-t\varepsilon^{-2s}}\le \frac{1}{2}e^{-t(\lambda_0(M)+\varepsilon)^s},\,t\ge 1
\]
and hence
\[
p_t^{(s)}(x,y)\ge \frac12 e^{-t(\lambda_0(M)+\varepsilon)^s}.
\]
Therefore
\[
\liminf_{t\to\infty}\frac1t\log p_t^{(s)}(x,y)
\ge
-(\lambda_0(M)+\varepsilon)^s.
\]
Letting \(\varepsilon\downarrow0\), we get
\[
\liminf_{t\to\infty}\frac1t\log p_t^{(s)}(x,y)
\ge
-\lambda_0(M)^s.
\]

Combining the upper and lower bounds, we conclude that
\[
\lim_{t\to\infty}\frac{\log p_t^{(s)}(x,y)}{t}
=
-\lambda_0(M)^s.
\]
This completes the proof.
\end{proof}

We next establish the off-diagonal small-time asymptotics of the fractional heat kernel, which will serve as the key analytic ingredient in the proof of the jump probability estimate.

\begin{proof}[\textbf{Proof of Proposition \ref{prop:fractional-heat-kernel-small-time-offdiag}.}]
Fix \(x,y\in M\) with \(x\neq y\). By the subordination formula,
\[
p_t^{(s)}(x,y)=\int_0^\infty \eta_t^{(s)}(\tau)\,p(\tau,x,y)\,d\tau,
\qquad t>0.
\]
Hence
\[
\frac{p_t^{(s)}(x,y)}{t}
=
\int_0^\infty \frac{\eta_t^{(s)}(\tau)}{t}\,p(\tau,x,y)\,d\tau.
\]

We claim that, for every fixed \(\tau>0\),
\[
\lim_{t\rightarrow0}\frac{\eta_t^{(s)}(\tau)}{t}
=
\frac{s}{\Gamma(1-s)}\,\tau^{-1-s}.
\]
Indeed, by the scaling property of the stable subordinator density \eqref{scaling pro},
\[
\eta_t^{(s)}(\tau)
=
t^{-1/s}\eta_1^{(s)}\!\left(\tau t^{-1/s}\right).
\]
Let \(r=\tau t^{-1/s}\). Then \(r\to\infty\) as \(t\rightarrow0\). By applying \cite[Theorem 37.1]{doetsch2012introduction} to the Laplace transform
\[
F(z)=e^{-z^s},
\]
whose expansion at \(z=0\) is
\[
e^{-z^s}=1-z^s+\frac12 z^{2s}+O(z^{3s}),
\]
we obtain
\[
\eta_1^{(s)}(r)\sim 
 \frac{s}{\Gamma(1-s)}\,r^{-1-s}
\qquad\text{as } r\to\infty.
\]
Therefore,
\[
\eta_t^{(s)}(\tau)
=
t^{-1/s}\eta_1^{(s)}(r)
\sim
t^{-1/s}\frac{s}{\Gamma(1-s)}\,r^{-1-s}
=\frac{ts}{\Gamma(1-s)}\,\tau^{-1-s},
\]
which proves the claim.

On the other hand, by the pointwise estimate for \(\eta_t^{(s)}\), there exists a constant \(C=C(s)>0\)
such that
\[
0\le \eta_t^{(s)}(\tau)\le C\,\frac{t}{\tau^{1+s}},
\qquad \forall\,\tau,t>0.
\]
Hence
\[
0\le \frac{\eta_t^{(s)}(\tau)}{t}\,p(\tau,x,y)
\le
C\,\frac{p(\tau,x,y)}{\tau^{1+s}}.
\]
Since,
\[
\int_0^\infty \frac{p(\tau,x,y)}{\tau^{1+s}}\,d\tau<\infty,
\]
the dominated convergence theorem applies and yields
\[
\lim_{t\rightarrow0}\frac{p_t^{(s)}(x,y)}{t}
=
\int_0^\infty
\lim_{t\rightarrow0}\frac{\eta_t^{(s)}(\tau)}{t}\,p(\tau,x,y)\,d\tau
=
\frac{s}{\Gamma(1-s)}\int_0^\infty \frac{p(\tau,x,y)}{\tau^{1+s}}\,d\tau=K_s(x,y).
\]
This proves the desired limit.
\end{proof}

\subsection{Small-Time Asymptotics of Jump Probabilities}

The next proof shows that the off-diagonal kernel asymptotics obtained above yields the precise first-order behavior of the jump probabilities of the subordinate process.

\begin{proof}[\textbf{Proof of Proposition \ref{prop:jump-probability}.}]
Since \(X_t^{(s)}\) is the Markov process associated with the semigroup
\((e^{-t(-\Delta)^s})_{t\ge0}\), its transition probabilities admit the density
\(p_t^{(s)}(x,y)\) with respect to \(dV_g(y)\). Therefore,
\[
\mathbb P_x\!\bigl(X_t^{(s)}\in A\bigr)
=
\int_A p_t^{(s)}(x,y)\,dV_g(y).
\]
Hence
\[
\frac{1}{t}\,\mathbb P_x\!\bigl(X_t^{(s)}\in A\bigr)
=
\int_A \frac{p_t^{(s)}(x,y)}{t}\,dV_g(y).
\]

Now let \(y\in A\). Since \(\operatorname{dist}(x,A)>0\), we have \(y\neq x\). By
Proposition \ref{prop:fractional-heat-kernel-small-time-offdiag},
\[
\lim_{t\rightarrow0}\frac{p_t^{(s)}(x,y)}{t}=K_s(x,y),
\qquad y\in A.
\]
Thus it remains to justify the passage of the limit under the integral sign.

Using the subordination formula,
\[
p_t^{(s)}(x,y)=\int_0^\infty \eta_t^{(s)}(\tau)\,p(\tau,x,y)\,d\tau,
\]
and the bound \eqref{tailinte},
\[
\eta_t^{(s)}(\tau)\le C_s\,\frac{t}{\tau^{1+s}},
\qquad \tau,t>0,
\]
we obtain
\[
0\le \frac{p_t^{(s)}(x,y)}{t}
\le
C_s\int_0^\infty p(\tau,x,y)\,\frac{d\tau}{\tau^{1+s}}
=
C_s\,\frac{\Gamma(1-s)}{s}\,K_s(x,y),
\qquad y\in A.
\]
Therefore,
\[
0\le \frac{p_t^{(s)}(x,y)}{t}\le C\,K_s(x,y),
\qquad y\in A,
\]
for some constant \(C=C(s)>0\), uniformly in \(t>0\).

It remains to show that \(K_s(x,\cdot)\in L^1(A,dV_g)\). Since
\(\operatorname{dist}(x,A)>0\), there exists \(c>0\) such that
\[
d_g(x,y)\ge c
\qquad \forall\, y\in A.
\]
By Proposition \ref{prop:varadhan-heat-kernel}, there exists a constant
\(C>0\), independent of \(y\in A\), such that
\[
p(\tau,x,y)\le C\,\tau,
\qquad 0<\tau\le1,\ y\in A.
\]
Therefore,
\[
\int_0^1 p(\tau,x,y)\,\frac{d\tau}{\tau^{1+s}}
\le
C\int_0^1 \tau^{-s}\,d\tau
=
\frac{C}{1-s},
\qquad y\in A.
\]
Since \(V_g(A)<\infty\), it follows that
\[
\int_A\int_0^1 p(\tau,x,y)\,\frac{d\tau}{\tau^{1+s}}\,dV_g(y)
\le
\frac{C}{1-s}\,V_g(A)
<\infty.
\]

For \(\tau\ge1\), by the Markov property,
\[
\int_M p(\tau,x,y)\,dV_g(y)\le 1,
\]
and therefore
\[
\int_1^\infty \int_A p(\tau,x,y)\,dV_g(y)\,\frac{d\tau}{\tau^{1+s}}
\le
\int_1^\infty \int_M p(\tau,x,y)\,dV_g(y)\,\frac{d\tau}{\tau^{1+s}}
\le
\int_1^\infty \tau^{-1-s}\,d\tau
<\infty.
\]
Combining the estimates on \((0,1]\) and \([1,\infty)\), we obtain
\[
\int_A K_s(x,y)\,dV_g(y)<\infty.
\]
The dominated convergence theorem now yields
\[
\lim_{t\rightarrow0}\frac{1}{t}\,\mathbb P_x\!\bigl(X_t^{(s)}\in A\bigr)
=
\lim_{t\rightarrow0}\int_A \frac{p_t^{(s)}(x,y)}{t}\,dV_g(y)
=
\int_A \lim_{t\rightarrow0}\frac{p_t^{(s)}(x,y)}{t}\,dV_g(y)
=
\int_A K_s(x,y)\,dV_g(y).
\]
This completes the proof.
\end{proof}

\subsection{Resolvent of the Fractional Laplacian}\label{resolvent}

For any $\alpha>0$, the resolvent of $(-\Delta)^s$ is a bounded operator on $L^2(M)$ defined by
\[
R_\alpha^{(s)}:=\bigl((-\Delta)^s+\alpha\bigr)^{-1}.
\]
Moreover, by the spectral theorem, the resolvent gains $2s$ derivatives, in the sense that
\[
R_\alpha^{(s)}:L^2(M)\to \operatorname{Dom}((-\Delta)^s)=H^{2s}(M)
\]
is bounded. We next record the following useful \(L^2\)-pairing identity for the fractional resolvent.

\begin{lemma}\label{lem:fractional-resolvent-laplace}
Let $(M,g)$ be a complete Riemannian manifold, let $0<s<1$, and let $\alpha>0$. 
Then, for every $f,g\in L^2(M)$,
\begin{equation}\label{eq:fractional-resolvent-laplace-pairing}
\bigl\langle R_\alpha^{(s)}f,g\bigr\rangle_{L^2(M)}
=
\int_0^\infty e^{-\alpha t}\bigl\langle T_t^{(s)}f,g\bigr\rangle_{L^{2}(M)}\,dt.
\end{equation}
\end{lemma}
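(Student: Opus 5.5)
The plan is to reduce the identity to a scalar Laplace transform computation through the spectral theorem for \(-\Delta\). Fix \(f,g\in L^2(M)\) and let \(E_{f,g}\) be the complex spectral measure introduced in Section~\ref{Equiva}. It has finite total variation, bounded by \(\|f\|_{L^2(M)}\|g\|_{L^2(M)}\). By the functional calculus,
\[
\bigl\langle R_\alpha^{(s)}f,g\bigr\rangle_{L^2(M)}=\int_{[0,\infty)}\frac{1}{\lambda^s+\alpha}\,dE_{f,g}(\lambda),
\qquad
\bigl\langle T_t^{(s)}f,g\bigr\rangle_{L^2(M)}=\int_{[0,\infty)}e^{-t\lambda^s}\,dE_{f,g}(\lambda).
\]
Both hold because the integrands are bounded Borel functions of \(\lambda\in[0,\infty)\): the first is bounded by \(\alpha^{-1}\) and the second by \(1\).

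Next I will use the elementary identity
\[
\int_0^\infty e^{-\alpha t}e^{-t\lambda^s}\,dt=\frac{1}{\lambda^s+\alpha},
\qquad \lambda\ge0,\ \alpha>0.
\]
Substituting it into the first formula gives
\[
\bigl\langle R_\alpha^{(s)}f,g\bigr\rangle_{L^2(M)}=\int_{[0,\infty)}\int_0^\infty e^{-(\alpha+\lambda^s)t}\,dt\,dE_{f,g}(\lambda).
\]
Interchanging the two integrals and applying the second formula for each fixed \(t\) then yields \eqref{eq:fractional-resolvent-laplace-pairing}.

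The only point needing care is the Fubini--Tonelli interchange, since \(E_{f,g}\) is a complex measure. I will handle it by working with the total variation measure \(|E_{f,g}|\), or equivalently by splitting \(E_{f,g}\) into the positive measures obtained from polarization. The integrand \((t,\lambda)\mapsto e^{-(\alpha+\lambda^s)t}\) is jointly measurable and nonnegative, and it is bounded by \(e^{-\alpha t}\). Hence
\[
\int_0^\infty\int_{[0,\infty)}e^{-(\alpha+\lambda^s)t}\,d|E_{f,g}|(\lambda)\,dt\le \frac{\|f\|_{L^2(M)}\|g\|_{L^2(M)}}{\alpha}<\infty,
\]
and Fubini's theorem applies.

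I should also check that the right-hand side of \eqref{eq:fractional-resolvent-laplace-pairing} makes sense as a Lebesgue integral. The map \(t\mapsto\langle T_t^{(s)}f,g\rangle_{L^2(M)}\) is continuous by the strong continuity in Proposition~\ref{prop:fractional-semigroup-Lp} with \(p=2\), or directly by dominated convergence in the spectral integral. It is bounded by \(\|f\|_{L^2(M)}\|g\|_{L^2(M)}\), so multiplying by \(e^{-\alpha t}\) gives an integrable function.

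An alternative route is to show that the Bochner integral \(\int_0^\infty e^{-\alpha t}T_t^{(s)}f\,dt\) satisfies \(\bigl((-\Delta)^s+\alpha\bigr)u=f\), using the generator identity for semigroups. The spectral argument is shorter, so I will use that one.
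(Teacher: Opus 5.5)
Your proposal is correct and follows essentially the same route as the paper: both write the two pairings as integrals against the spectral measure $E_{f,g}$ and swap the $t$- and $\lambda$-integrals using $\int_0^\infty e^{-(\alpha+\lambda^s)t}\,dt=(\alpha+\lambda^s)^{-1}$. Your justification of Fubini via the total variation measure $|E_{f,g}|$ (or via polarization) makes explicit a step the paper only cites.
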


\begin{proof}
By the spectral theorem,
\[
T_t^{(s)}f=\int_{[0,\infty)} e^{-t\lambda^s}\,dE(\lambda) f,
\qquad
R_\alpha^{(s)}f=\int_{[0,\infty)} \frac{1}{\lambda^s+\alpha}\,dE(\lambda) f.
\]
Hence, for every $f,g\in L^2(M)$,
\[
\bigl\langle T_t^{(s)}f,g\bigr\rangle_{L^{2}(M)}
=
\int_{[0,\infty)} e^{-t\lambda^s}\,dE_{f,g}(\lambda).
\]
Therefore,
\[
\int_0^\infty e^{-\alpha t}\bigl\langle T_t^{(s)}f,g\bigr\rangle_{L^{2}(M)}\,dt
=
\int_0^\infty e^{-\alpha t}
\left(
\int_{[0,\infty)} e^{-t\lambda^s}\,dE_{f,g}(\lambda)
\right)\,dt.
\]
By Fubini's theorem,
\[
\int_0^\infty e^{-\alpha t}\bigl\langle T_t^{(s)}f,g\bigr\rangle_{L^{2}(M)}\,dt
=
\int_{[0,\infty)}
\left(
\int_0^\infty e^{-t(\alpha+\lambda^s)}\,dt
\right)\,dE_{f,g}(\lambda)
=
\int_{[0,\infty)} \frac{1}{\alpha+\lambda^s}\,dE_{f,g}(\lambda),
\]
which is exactly
\[
\bigl\langle R_\alpha^{(s)}f,g\bigr\rangle_{L^2(M)}.
\]
This proves \eqref{eq:fractional-resolvent-laplace-pairing}.
\end{proof}

By Proposition \ref{prop:fractional-semigroup-Lp}, for every \(1\le p\le\infty\), the family
\(\{T_t^{(s)}\}_{t\ge0}\) is contractive on \(L^p(M)\). Hence, for every \(\alpha>0\) and every
\(f\in L^p(M)\), the map
\[
t\longmapsto e^{-\alpha t}T_t^{(s)}f
\]
is Bochner integrable as an \(L^p(M)\)-valued function on \((0,\infty)\), and therefore
\[\mathcal R_{\alpha,p}^{(s)}f
:=
\int_0^\infty e^{-\alpha t}T_t^{(s)}f\,dt\]
is well defined in \(L^p(M)\). Moreover,
\[
\|\mathcal R_{\alpha,p}^{(s)}f\|_{L^p(M)}
\le
\int_0^\infty e^{-\alpha t}\|T_t^{(s)}f\|_{L^p(M)}\,dt
\le
\int_0^\infty e^{-\alpha t}\,dt\,\|f\|_{L^p(M)}
=
\alpha^{-1}\|f\|_{L^p(M)}.
\]
Thus \(\mathcal R_{\alpha,p}^{(s)}:L^p(M)\to L^p(M)\) is a bounded linear operator satisfying
\[
\|\mathcal R_{\alpha,p}^{(s)}\|_{\mathcal L(L^p(M))}
\le \alpha^{-1}.
\]

In the Hilbert space case \(p=2\), for every \(g\in L^2(M)\), we obtain
\[
\left\langle \mathcal R_{\alpha,2}^{(s)}f,g\right\rangle_{L^2(M)}
=
\left\langle \int_0^\infty e^{-\alpha t}T_t^{(s)}f\,dt,\; g\right\rangle_{L^2(M)}
=
\int_0^\infty e^{-\alpha t}\left\langle T_t^{(s)}f,g\right\rangle_{L^2(M)}\,dt.
\]
Therefore, by Lemma \ref{lem:fractional-resolvent-laplace},
\[
\left\langle \mathcal R_{\alpha,2}^{(s)}f,g\right\rangle_{L^2(M)}
=
\left\langle R_\alpha^{(s)}f,g\right\rangle_{L^2(M)}
\qquad\forall\, g\in L^2(M),
\]
and hence
\[R_\alpha^{(s)}f
=
\mathcal R_{\alpha,2}^{(s)}f
=
\int_0^\infty e^{-\alpha t}T_t^{(s)}f\,dt
\qquad\text{in }L^2(M).\]

Accordingly, for general \(1\le p\le\infty\), we define the \(L^p\)-extension of the resolvent by
\[
R_\alpha^{(s)}f
:=
\int_0^\infty e^{-\alpha t}T_t^{(s)}f\,dt,
\qquad f\in L^p(M),
\]
where the integral is understood in the Bochner sense. With this definition,
\[
R_\alpha^{(s)}:L^p(M)\to L^p(M)
\]
is a bounded linear operator and
\[
\|R_\alpha^{(s)}f\|_{L^p(M)}
\le
\alpha^{-1}\|f\|_{L^p(M)}.
\]

It is not difficult to verify that the fractional resolvent \(R_\alpha^{(s)}\) is positivity preserving and sub-Markovian. Namely, if \(f\in L^2(M)\) satisfies \(f\ge 0\) almost everywhere on \(M\), then
\[
R_\alpha^{(s)}f\ge 0
\qquad \text{a.e. on }M,
\]
while if \(f\le 1\) almost everywhere on \(M\), then
\[
R_\alpha^{(s)}f\le \alpha^{-1}
\qquad \text{a.e. on }M.
\]

We finally turn to the variational characterization of the fractional resolvent, which provides both the existence and uniqueness of weak solutions and the minimality of the resolvent among nonnegative weak supersolutions.

\begin{proof}[\textbf{Proof of Proposition \ref{prop:fractional-resolvent-minimality}.}]
(i) Since $R_\alpha^{(s)}=((-\Delta)^s+\alpha)^{-1}$ is the resolvent of the nonnegative
self-adjoint operator $(-\Delta)^s$ on $L^2(M)$, we have
\[
u_\alpha:=R_\alpha^{(s)}f\in \operatorname{Dom}((-\Delta)^s)=H^{2s}(M),
\]
and
\[
((-\Delta)^s+\alpha)u_\alpha=f
\qquad\text{in }L^2(M).
\]
Therefore, for every $\varphi\in H^s(M)$,
\[
\langle (-\Delta)^s u_\alpha,\varphi\rangle_{L^2(M)}
+\alpha\langle u_\alpha,\varphi\rangle_{L^2(M)}
=
\langle f,\varphi\rangle_{L^2(M)}.
\]
Using the spectral theorem,
\[
\langle (-\Delta)^s u_\alpha,\varphi\rangle_{L^2(M)}
=
\langle (-\Delta)^{\frac s2}u_\alpha,(-\Delta)^{\frac s2}\varphi\rangle_{L^2(M)}
=
\mathcal E_s(u_\alpha,\varphi),
\]
hence \eqref{eq:fractional-weak-resolvent} follows. Uniqueness of weak solutions follows by
testing the homogeneous equation with the solution itself.

(ii) Let
\[
w:=R_\alpha^{(s)}f.
\]
By part (i), $w$ is a weak solution of \eqref{eq:fractional-elliptic-resolvent}. Subtracting
the weak formulation for $w$ from the weak supersolution inequality for $u$, we obtain
\begin{equation}\label{eq:super-minus-sol}
\mathcal E_s(u-w,\varphi)+\alpha\int_M (u-w)\varphi\,dV_g\ge0\quad  \forall\,\varphi\in H^s(M),\ \varphi\ge0.
\end{equation}

Set
\[
v:=u-w\in H^s(M),
\]
and define the positive and negative parts of \(v\) by
\[
v^+:=\max\{v,0\},
\qquad
v^-:=\max\{-v,0\}.
\]
Then
\[
v=v^+-v^-,
\qquad
|v|=v^++v^-,
\qquad
v^-=(w-u)^+.
\]

Since \(\mathcal E_s\) is a Dirichlet form on \(L^2(M)\), its domain \(H^s(M)\) is stable under truncations. Hence
\[
v^+,\,v^-\in H^s(M);
\]
see \cite[Theorem 1.4.1]{fukushima2011dirichlet}. Moreover,
\[
\mathcal E_s(v^+,v^-)\le 0.
\]
Indeed, by the Markov property of Dirichlet forms,
\[
\mathcal E_s(|v|,|v|)\le \mathcal E_s(v,v).
\]
Using the identities
\[
v=v^+-v^-,
\qquad
|v|=v^++v^-,
\]
we compute
\[
\mathcal E_s(|v|,|v|)
=
\mathcal E_s(v^+,v^+)+\mathcal E_s(v^-,v^-)+2\mathcal E_s(v^+,v^-),
\]
and
\[
\mathcal E_s(v,v)
=
\mathcal E_s(v^+,v^+)+\mathcal E_s(v^-,v^-)-2\mathcal E_s(v^+,v^-).
\]
Therefore,
\[
\mathcal E_s(v^+,v^-)\le 0.
\]

Choosing $\varphi=v^-\ge0$ in \eqref{eq:super-minus-sol}, we get
\[
\mathcal E_s(v,v^-)+\alpha\int_M vv^-\,dV_g\ge0.
\]
Now write $v=v^+-v^-$. Then
\[
\mathcal E_s(v,v^-)
=
\mathcal E_s(v^+,v^-)-\mathcal E_s(v^-,v^-)
\le
-\mathcal E_s(v^-,v^-),
\]
and
\[
\int_M vv^-\,dV_g
=
\int_M (v^+-v^-)v^-\,dV_g
=
-\int_M |v^-|^2\,dV_g.
\]
Hence
\[
0
\le
\mathcal E_s(v,v^-)+\alpha\int_M vv^-\,dV_g
\le
-\mathcal E_s(v^-,v^-)-\alpha\|v^-\|_{L^2(M)}^2.
\]
Therefore,
\[
\mathcal E_s(v^-,v^-)=0
\qquad\text{and}\qquad
\|v^-\|_{L^2(M)}=0,
\]
so $v^-=0$ almost everywhere, that is,
\[
u-w\ge0
\qquad\text{a.e. on }M.
\]
Since $w=R_\alpha^{(s)}f$, this proves
\[
u\ge R_\alpha^{(s)}f
\qquad\text{a.e. on }M.
\]
The last statement is immediate.
\end{proof}

\subsection{Generalized Conservation via the Intrinsic Killing Term}

Recall that, for every \(\varphi\in C_c^\infty(M)\),
\begin{equation}\label{eq:frac-decomp-intro}
(-\Delta)^s \varphi
=
(-\Delta)_{\mathrm{hk}}^s \varphi+\mathcal R_s\,\varphi.
\end{equation}
This decomposition shows that the spectral fractional Laplacian consists of a nonlocal
jump part, described by the heat-kernel operator \((-\Delta)_{\mathrm{hk}}^s\), together with a zeroth-order term \(\mathcal R_s\). The latter reflects the loss of mass of the underlying heat semigroup and may therefore be interpreted as an intrinsic killing term.

To prove these identities, we first establish several auxiliary lemmas.

Although \(\{T_t^{(s)}\}_{t\ge0}\) is in general not strongly continuous on \(L^\infty(M)\) at \(t=0\), it is still locally continuous in time away from the origin.

\begin{lemma}\label{lem:Linf-time-continuity-subordinate}
Let \(0<s<1\) and \(f\in L^\infty(M)\). Then, for every \(t_0>0\),
\[
\|T_t^{(s)}f-T_{t_0}^{(s)}f\|_{L^\infty(M)}\to0
\qquad\text{as }t\to t_0.
\]
In particular, for every \(x\in M\), the map
\[
(0,\infty)\ni t\longmapsto T_t^{(s)}f(x)
\]
is continuous.
\end{lemma}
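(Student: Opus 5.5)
The plan is to use the subordination formula \eqref{integralres} and reduce everything to the subordinator densities. For \(f\in L^\infty(M)\) and \(t,t_0>0\), write
\[
T_t^{(s)}f-T_{t_0}^{(s)}f=\int_0^\infty\bigl(\eta_t^{(s)}(\tau)-\eta_{t_0}^{(s)}(\tau)\bigr)P_\tau f\,d\tau .
\]
Since the heat semigroup is \(L^\infty\)-contractive, this gives the estimate
\[
\|T_t^{(s)}f-T_{t_0}^{(s)}f\|_{L^\infty(M)}\le \|f\|_{L^\infty(M)}\int_0^\infty\bigl|\eta_t^{(s)}(\tau)-\eta_{t_0}^{(s)}(\tau)\bigr|\,d\tau .
\]
A cleaner route is to read the formula pointwise, \(T_t^{(s)}f(x)=\int_0^\infty\eta_t^{(s)}(\tau)P_\tau f(x)\,d\tau\), with \(|P_\tau f(x)|\le\|f\|_\infty\) for every \(x\). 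This avoids any question about Bochner integrability in \(L^\infty\) and gives the same bound uniformly in \(x\). So it suffices to show \(\eta_t^{(s)}\to\eta_{t_0}^{(s)}\) in \(L^1(0,\infty)\) as \(t\to t_0\).

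To prove this \(L^1\) convergence, use the scaling property \eqref{scaling pro}, \(\eta_t^{(s)}(\tau)=t^{-1/s}\eta_1^{(s)}(\tau t^{-1/s})\). The family \(\{\eta_t^{(s)}\}\) is then the family of dilations of the single density \(\eta_1^{(s)}\in L^1(0,\infty)\). Dilations act continuously on \(L^1\), by the standard argument:
\begin{enumerate}
\item[(a)] approximate \(\eta_1^{(s)}\) in \(L^1\) by a function \(h\in C_c(0,\infty)\);
\item[(b)] note that the dilation \(h_t(\tau)=t^{-1/s}h(\tau t^{-1/s})\) is an \(L^1\)-isometry;
\item[(c)] observe that \(h_t\to h_{t_0}\) uniformly on a common compact support as \(t\to t_0\), hence in \(L^1\);
\item[(d)] conclude by a \(3\varepsilon\) argument.
\end{enumerate}
Alternatively, \(\eta_1^{(s)}\) is continuous; this is classical, since it is smooth on \((0,\infty)\) as a stable density. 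Then \(\eta_t^{(s)}\to\eta_{t_0}^{(s)}\) pointwise, and Scheffé's lemma gives \(L^1\) convergence, because all the \(\eta_t^{(s)}\) are probability densities. I would present the density-approximation argument, since it needs no regularity of \(\eta_1^{(s)}\).

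The pointwise continuity of \(t\mapsto T_t^{(s)}f(x)\) then follows at once from the uniform estimate, because each \(T_t^{(s)}f\) is continuous by Remark~\ref{rem:continuity-vs-smoothness-subordinate}. The uniform bound therefore holds everywhere, not only almost everywhere.

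The main point of care is the \(L^1\)-continuity of the dilations. Here it is essential that \(t_0>0\): as \(t\to0\) the densities concentrate at \(\tau=0\), and the argument correctly fails there, consistent with the lack of strong continuity on \(L^\infty\) at the origin.
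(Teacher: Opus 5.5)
Your proposal is correct and follows the paper's proof. As in the paper, the subordination formula together with $|P_\tau f(x)|\le\|f\|_{L^\infty(M)}$ gives the bound $\|f\|_{L^\infty(M)}\|\eta_t^{(s)}-\eta_{t_0}^{(s)}\|_{L^1(0,\infty)}$, and the scaling property reduces everything to $L^1$-continuity of dilations of $\eta_1^{(s)}$. The only difference is cosmetic: the paper gets dilation continuity from continuity of translations in $L^1(\mathbb R)$ via the substitution $h(y)=e^y g(e^y)$, while you prove it directly by $C_c$-approximation (or by Scheffé's lemma), which is the same underlying argument.
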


\begin{proof}
By the subordination formula,
\[
T_t^{(s)}f(x)=\int_0^\infty \eta_t^{(s)}(\tau)\,P_\tau f(x)\,d\tau.
\]
Since \(\{P_\tau\}_{\tau>0}\) is Markovian,
\[
|P_\tau f(x)|\le \|f\|_{L^\infty(M)}
\qquad\forall \tau>0,\ x\in M.
\]
Hence
\[
\|T_t^{(s)}f-T_{t_0}^{(s)}f\|_{L^\infty(M)}
\le
\|f\|_{L^\infty(M)}
\|\eta_t^{(s)}-\eta_{t_0}^{(s)}\|_{L^1(0,\infty)}.
\]
It remains to show that
\[
\|\eta_t^{(s)}-\eta_{t_0}^{(s)}\|_{L^1(0,\infty)}\to0
\qquad\text{as }t\to t_0.
\]
Using the scaling property
\[
\eta_t^{(s)}(\tau)=t^{-1/s}\eta_1^{(s)}\!\left(\tau t^{-1/s}\right),
\]
this follows from the continuity in \(L^1(0,\infty)\) of the dilation map
\[
a\longmapsto a\,g(a\cdot),\qquad a>0,
\]
for \(g\in L^1(0,\infty)\), which in turn is a direct consequence of the classical continuity of translations in \(L^1(\mathbb R)\). Indeed, if one sets
\[
h(y):=e^y g(e^y),\qquad y\in\mathbb R,
\]
then \(h\in L^1(\mathbb R)\) and
\[
\|h\|_{L^1(\mathbb R)}=\|g\|_{L^1(0,\infty)}.
\]
Therefore, after the change of variables \(x=e^y\),
\[
\|D_a g-D_b g\|_{L^1(0,\infty)}
=
\|h(\cdot+\log a)-h(\cdot+\log b)\|_{L^1(\mathbb R)}.
\]
Hence the continuity of \(a\mapsto D_a g\) in \(L^1(0,\infty)\) follows from the standard continuity of translations in \(L^1(\mathbb R)\). Therefore
\[
\|T_t^{(s)}f-T_{t_0}^{(s)}f\|_{L^\infty(M)}\to0,
\]
which complete the proof.
\end{proof}

\begin{remark}\label{rem:HtNs-continuity}
By Lemma~\ref{lem:Linf-time-continuity-subordinate}, Remark~\ref{rem:continuity-vs-smoothness-subordinate}, and the \(L^\infty\)-contractivity of the fractional semigroup, it follows that, for every \(t>0\), the function
\[
x\mapsto H_t^{(s)}(x)
\]
is continuous on \(M\), and for every \(x\in M\), the map
\[
t\mapsto T_t^{(s)}\mathbf 1 (x)+\int_0^t T_\tau^{(s)}\mathcal R_s(x)\,d\tau
\]
is continuous on \((0,\infty)\). Moreover, for every \(\alpha>0\), the function
\[
x\mapsto \alpha R_\alpha^{(s)}\mathbf 1(x)+R_\alpha^{(s)}\mathcal R_s(x)
\]
is continuous on \(M\). Indeed, since
\[
N_\alpha^{(s)}(x)
=
\alpha R_\alpha^{(s)}\mathbf 1(x)+R_\alpha^{(s)}\mathcal R_s(x)
=
\int_0^\infty e^{-\alpha t}\Bigl(\alpha T_t^{(s)}\mathbf 1(x)+T_t^{(s)}\mathcal R_s(x)\Bigr)\,dt,
\]
and since \(x\mapsto T_t^{(s)}\mathbf 1(x)\) and \(x\mapsto T_t^{(s)}\mathcal R_s(x)\) are continuous for every \(t>0\), while
\[
\bigl|\alpha T_t^{(s)}\mathbf 1(x)+T_t^{(s)}\mathcal R_s(x)\bigr|
\le
\alpha+\|\mathcal R_s\|_{L^\infty(M)},
\]
the continuity of \(N_\alpha^{(s)}\) follows from the dominated convergence theorem.
\end{remark}

\begin{lemma}\label{lem:generator-pairing-Rs}
Let \((M,g)\) be a complete Riemannian manifold, let \(0<s<1\), and let $-(-\Delta)_1^s$ denote the generator of the subordinate semigroup on \(L^1(M)\). Then, for every
\[
u\in \operatorname{Dom}((-\Delta)^s)\cap  \operatorname{Dom}((-\Delta)_1^s)\cap L^1(M)\cap L^\infty(M),
\]
one has
\[
\int_M (-\Delta)^su\,dV_g
=
\int_M \mathcal R_s\,u\,dV_g.
\]
In particular, for every \(\varphi\in C_c^\infty(M)\) and every \(t>0\),
\[
\int_M (-\Delta)^sT_t^{(s)}\varphi\,dV_g
=
\int_M \mathcal R_s\,T_t^{(s)}\varphi\,dV_g.
\]
\end{lemma}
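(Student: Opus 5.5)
We will prove the identity by integrating the generator against \(\mathbf 1\) through the semigroup and computing the resulting mass defect with the subordination formula. Let \(u\) be as in the statement. Since \(u\in\operatorname{Dom}((-\Delta)_1^s)\), we have
\[
(-\Delta)^s u=(-\Delta)_1^s u=\lim_{t\downarrow0}\frac{u-T_t^{(s)}u}{t}
\qquad\text{in }L^1(M),
\]
and the \(L^1\)- and \(L^2\)-generators agree on the common domain by the compatibility remark. Integrating over \(M\), which is continuous on \(L^1(M)\), gives
\[
\int_M(-\Delta)^s u\,dV_g=\lim_{t\downarrow0}\frac1t\int_M\bigl(u-T_t^{(s)}u\bigr)\,dV_g .
\]

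Next we rewrite the mass defect. The kernel \(p_t^{(s)}\) is symmetric and nonnegative, and \(u\in L^1(M)\cap L^\infty(M)\), so Fubini applies and
\[
\int_M T_t^{(s)}u\,dV_g=\int_M u\,T_t^{(s)}\mathbf 1\,dV_g,
\qquad
\frac1t\int_M\bigl(u-T_t^{(s)}u\bigr)\,dV_g=\int_M u(x)\,\frac{1-T_t^{(s)}\mathbf 1(x)}{t}\,dV_g(x).
\]
By the subordination formula \eqref{integralres} and \(\int_0^\infty\eta_t^{(s)}\,d\tau=1\),
\[
\frac{1-T_t^{(s)}\mathbf 1(x)}{t}=\int_0^\infty\frac{\eta_t^{(s)}(\tau)}{t}\bigl(1-\Theta(\tau,x)\bigr)\,d\tau .
\]
From the proof of Proposition \ref{prop:fractional-heat-kernel-small-time-offdiag} we have \(\eta_t^{(s)}(\tau)/t\to\frac{s}{\Gamma(1-s)}\tau^{-1-s}\) pointwise, and \eqref{tailinte} gives the domination \(\eta_t^{(s)}(\tau)/t\le C\tau^{-1-s}\).

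Passing to the limit at fixed \(x\) is the key step. We need \(\int_0^\infty(1-\Theta(\tau,x))\tau^{-1-s}\,d\tau<\infty\), i.e. \(\mathcal R_s(x)<\infty\). Near \(\tau=0\) this requires \(1-\Theta(\tau,x)=O(\tau)\), which we would obtain locally uniformly in \(x\) (for instance from the finiteness of \(\mathcal R_s\) established in \cite{chen2025logarithmic}); the intro remark also treats \(\mathcal R_s\in L^\infty\). Granting this, dominated convergence in \(\tau\) yields the pointwise limit \(\mathcal R_s(x)\). Moreover, we get the uniform bound
\[
0\le\frac{1-T_t^{(s)}\mathbf 1(x)}{t}\le C\,\frac{\Gamma(1-s)}{s}\,\mathcal R_s(x)\le C'\,\|\mathcal R_s\|_{L^\infty(M)} .
\]
Since \(u\in L^1(M)\), a second application of dominated convergence, now in \(x\), gives \(\int_M\mathcal R_s u\,dV_g\), which proves the identity.

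The main obstacle is exactly this justification: proving \(\mathcal R_s\in L^\infty(M)\), or at least enough integrability of \(\mathcal R_s|u|\). This amounts to a uniform small-time bound \(1-\Theta(\tau,x)\le C\tau\). If a global bound is unavailable, we would fall back on Fatou's lemma and split \(u=u^+-u^-\). Both parts converge because \((1-T^{(s)}_t\mathbf 1)/t\ge0\) and the total limit exists, and we can pass to monotone or equi-integrable limits.

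For the last assertion, take \(\varphi\in C_c^\infty(M)\) and \(t>0\). Then \(T_t^{(s)}\varphi\) lies in \(L^1\cap L^\infty\) by Proposition \ref{prop:fractional-semigroup-Lp}, and in \(\operatorname{Dom}((-\Delta)^s)\) by the spectral theorem. It also lies in \(\operatorname{Dom}((-\Delta)_1^s)\), because \(\varphi\in\operatorname{Dom}((-\Delta)^s_1)\) (using \(\varphi\in\operatorname{Dom}(\Delta_1)\) and the Bochner representation in \(L^1\)) and the semigroup leaves its generator domain invariant. Hence the first part applies to \(u=T_t^{(s)}\varphi\).
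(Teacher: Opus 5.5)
There is a genuine gap, and it sits exactly at the step you flag. Your route differs from the paper's, which writes $(-\Delta)_1^s u$ through the Bochner formula $\frac{s}{\Gamma(1-s)}\int_0^\infty (u-P_\tau u)\tau^{-1-s}\,d\tau$ in $L^1(M)$, integrates, uses $\int_M P_\tau u=\int_M u\,P_\tau\mathbf 1$, and applies Tonelli to $u^\pm$. You instead use the difference quotient and the asymptotics of $\eta_t^{(s)}(\tau)/t$. Everything up to the pointwise limit $g_t(x):=t^{-1}\bigl(1-T_t^{(s)}\mathbf 1(x)\bigr)\to\mathcal R_s(x)$, and the majorant $0\le g_t\le C\frac{\Gamma(1-s)}{s}\mathcal R_s$ from \eqref{tailinte}, is fine.

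\textbf{$\mathcal R_s$ need not be bounded.} You grant $\mathcal R_s\in L^\infty(M)$, but this fails in general. Since $\tau\mapsto 1-\Theta(\tau,x)$ is nondecreasing, $\mathcal R_s(x)\ge\frac{\tau_0^{-s}}{\Gamma(1-s)}\bigl(1-\Theta(\tau_0,x)\bigr)$ for every $\tau_0>0$. Take a rotationally symmetric model $dr^2+\sigma(r)^2g_{\mathbb S^{n-1}}$ with $\sigma(r)=e^{r^3}$ for large $r$. The radial drift $3(n-1)r^2$ forces Brownian motion started at distance $r_0$ to explode before time $1/r_0$ with probability tending to $1$. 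Hence $\mathcal R_s(x)\gtrsim r_0^{s}\to\infty$. Local uniformity in $x$ does not help, since $u\in L^1(M)$ need not have compact support.

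\textbf{The fallback does not close the gap.} Convergence of $\int_M u^+g_t-\int_M u^-g_t$ together with $g_t\ge0$ does not imply that each term converges; both may diverge. Fatou alone gives only a one-sided inequality.

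\textbf{The missing ingredient} is an a priori bound on $\int_M|u|\,g_t$, which comes from $u\in\operatorname{Dom}((-\Delta)_1^s)$. By symmetry of $p_t^{(s)}$ and $|T_t^{(s)}u|\le T_t^{(s)}|u|$,
\[
\int_M|u|\,g_t\,dV_g=\frac1t\int_M\bigl(|u|-T_t^{(s)}|u|\bigr)\,dV_g\le\frac1t\int_M\bigl(|u|-|T_t^{(s)}u|\bigr)\,dV_g\le\frac{\|u-T_t^{(s)}u\|_{L^1(M)}}{t},
\]
and the right-hand side converges to $\|(-\Delta)_1^s u\|_{L^1(M)}$. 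Fatou in $\tau$ gives $\liminf_{t\downarrow0}g_t\ge\mathcal R_s$ pointwise. Fatou in $x$ then gives $\int_M|u|\,\mathcal R_s\,dV_g\le\|(-\Delta)_1^su\|_{L^1(M)}<\infty$.

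So $C\frac{\Gamma(1-s)}{s}|u|\mathcal R_s$ is an integrable majorant for $u\,g_t$, and dominated convergence yields $\int_M u\,g_t\,dV_g\to\int_M\mathcal R_s u\,dV_g$. This also gives $\mathcal R_s<\infty$ a.e.\ on $\{u\neq0\}$, so no separate pointwise finiteness is needed. It further shows that the right-hand side of the lemma is finite, which the paper's Tonelli step on $u^\pm$ tacitly requires as well.

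With this inserted your proof is complete. Your treatment of $u=T_t^{(s)}\varphi$ in the last paragraph is fine.
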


\begin{proof}
Let
\[
u\in \operatorname{Dom}((-\Delta)^s)\cap \operatorname{Dom}((-\Delta)_1^s)\cap L^1(M)\cap L^\infty(M).
\]
Since \((-\Delta)_1^s\) is the generator of the subordinate semigroup on \(L^1(M)\), the Bochner subordination formula yields
\[
(-\Delta)_1^s u
=
\frac{s}{\Gamma(1-s)}
\int_0^\infty \frac{u-P_\tau u}{\tau^{1+s}}\,d\tau
\qquad\text{in }L^1(M),
\]
where \(P_\tau\) is the heat semigroup on \(L^2(M)\), and the integral is understood in the Bochner sense.

Applying the bounded linear functional
\[
\Lambda:L^1(M)\to\mathbb R,
\qquad
\Lambda(f):=\int_M f\,dV_g,
\]
we obtain
\[
\int_M (-\Delta)_1^s u\,dV_g
=
\frac{s}{\Gamma(1-s)}
\int_0^\infty
\frac{1}{\tau^{1+s}}
\left(
\int_M u\,dV_g-\int_M P_\tau u\,dV_g
\right)\,d\tau.
\]
By the symmetry of the heat kernel,
\[
\int_M P_\tau u\,dV_g
=
\int_M u\,P_\tau \mathbf 1\,dV_g.
\]
Hence
\[
\int_M (-\Delta)_1^s u\,dV_g
=
\frac{s}{\Gamma(1-s)}
\int_0^\infty
\frac{1}{\tau^{1+s}}
\left(
\int_M u(x)\bigl(1-P_\tau\mathbf 1(x)\bigr)\,dV_g(x)
\right)d\tau.
\]

Now \(1-P_\tau\mathbf 1(x)\ge 0\) for every \(\tau>0\) and almost every \(x\in M\). Writing
\[
u=u^+-u^-,
\qquad u^\pm\ge0,
\]
and using Tonelli's theorem for each of the nonnegative functions
\[
u^\pm(x)\frac{1-P_\tau\mathbf 1(x)}{\tau^{1+s}},
\]
we get
\[
\frac{s}{\Gamma(1-s)}
\int_0^\infty
\frac{1}{\tau^{1+s}}
\left(
\int_M u(x)\bigl(1-P_\tau\mathbf 1(x)\bigr)\,dV_g(x)
\right)d\tau
\]
\[
=
\int_M u(x)
\left[
\frac{s}{\Gamma(1-s)}
\int_0^\infty
\frac{1-P_\tau\mathbf 1(x)}{\tau^{1+s}}\,d\tau
\right]dV_g(x).
\]
By \eqref{remainder}, we obtain
\[
\int_M (-\Delta)_1^s u\,dV_g
=
\int_M \mathcal R_s(x)u(x)\,dV_g(x).
\]

Since \(u\in \operatorname{Dom}((-\Delta)^s)\cap \operatorname{Dom}((-\Delta)_1^s)\), the two realizations agree on \(u\). Hence
\[
\int_M (-\Delta)^s u\,dV_g
=
\int_M \mathcal R_s u\,dV_g.
\]
Finally, let \(\varphi\in C_c^\infty(M)\) and \(t>0\). By the semigroup properties,
\[
T_t^{(s)}\varphi\in \operatorname{Dom}((-\Delta)^s)\cap \operatorname{Dom}((-\Delta)_1^s)\cap L^1(M)\cap L^\infty(M).
\]
Applying the identity above with \(u=T_t^{(s)}\varphi\), we conclude that
\[
\int_M (-\Delta)^sT_t^{(s)}\varphi\,dV_g
=
\int_M \mathcal R_s\,T_t^{(s)}\varphi\,dV_g,
\]
thus, we complete the proof.
\end{proof}

\begin{proof}[\textbf{Proof of Theorem \ref{thm:generalized identity}.}]
Fix \(\varphi\in C_c^\infty(M)\) and define
\[
F_\varphi(t)
:=
\int_M T_t^{(s)}\varphi\,dV_g
+
\int_0^t \langle \mathcal R_s, T_\tau^{(s)}\varphi\rangle\,d\tau,
\qquad t>0.
\]
Since \(\{T_t^{(s)}\}_{t\ge0}\) is the semigroup generated by \(-(-\Delta)_1^s\) on \(L^1(M)\), we have
\[
\partial_t T_t^{(s)}\varphi
=
-(-\Delta)_1^sT_t^{(s)}\varphi
\qquad\text{in }L^1(M),
\]
and therefore
\[
\frac{d}{dt}\int_M T_t^{(s)}\varphi\,dV_g
=
-\int_M (-\Delta)_1^sT_t^{(s)}\varphi\,dV_g.
\]
By Lemma~\ref{lem:generator-pairing-Rs},
\[
\int_M (-\Delta)_1^sT_t^{(s)}\varphi\,dV_g
=
\langle \mathcal R_s,T_t^{(s)}\varphi\rangle.
\]
Hence
\[
\frac{d}{dt}\int_M T_t^{(s)}\varphi\,dV_g
=
-\langle \mathcal R_s,T_t^{(s)}\varphi\rangle.
\]
On the other hand,
\[
\frac{d}{dt}\int_0^t \langle \mathcal R_s,T_\tau^{(s)}\varphi\rangle\,d\tau
=
\langle \mathcal R_s,T_t^{(s)}\varphi\rangle.
\]
Therefore
\[
F_\varphi'(t)=0,
\]
so \(F_\varphi(t)\) is constant on \((0,\infty)\). Since \(T_t^{(s)}\varphi\to\varphi\) in \(L^1(M)\) as \(t\downarrow0\), and
\[
\int_0^t \langle \mathcal R_s,T_\tau^{(s)}\varphi\rangle\,d\tau \to 0
\qquad\text{as }t\downarrow0,
\]
we obtain
\[
F_\varphi(t)=\lim_{t\rightarrow0}F_\varphi(t)=\int_M \varphi\,dV_g.
\]
Thus,
\[
\int_M
\left(
T_t^{(s)}\mathbf 1+\int_0^t T_\tau^{(s)}\mathcal R_s\,d\tau
\right)\varphi\,dV_g
=
\int_M \varphi\,dV_g
\]
for every \(\varphi\in C_c^\infty(M)\). Hence
\[
T_t^{(s)}\mathbf 1+\int_0^t T_\tau^{(s)}\mathcal R_s\,d\tau=1
\qquad\text{in }\mathcal D'(M).
\]
By Remark~\ref{rem:HtNs-continuity}, the left-hand side is continuous in \(x\), and therefore
\[
T_t^{(s)}\mathbf 1(x)+\int_0^t T_\tau^{(s)}\mathcal R_s(x)\,d\tau=1
\qquad\text{for every }x\in M.
\]

Now fix \(\alpha>0\). Multiplying the identity above by \(e^{-\alpha t}\) and integrating over \((0,\infty)\), we obtain
\[
\int_0^\infty e^{-\alpha t}T_t^{(s)}\mathbf 1\,dt
+
\int_0^\infty e^{-\alpha t}\left(\int_0^t T_\tau^{(s)}\mathcal R_s\,d\tau\right)dt
=
\int_0^\infty e^{-\alpha t}\,dt
=
\alpha^{-1}.
\]
By the definition of the resolvent,
\[
R_\alpha^{(s)}\mathbf 1
+
\int_0^\infty e^{-\alpha t}\left(\int_0^t T_\tau^{(s)}\mathcal R_s\,d\tau\right)dt
=
\alpha^{-1}.
\]
Since \(0\le T_\tau^{(s)}\mathcal R_s\le \|\mathcal R_s\|_{L^\infty(M)}\), Fubini's theorem applies and yields
\[
\int_0^\infty e^{-\alpha t}\left(\int_0^t T_\tau^{(s)}\mathcal R_s\,d\tau\right)dt
=
\int_0^\infty\left(\int_\tau^\infty e^{-\alpha t}\,dt\right)T_\tau^{(s)}\mathcal R_s\,d\tau
=
\alpha^{-1}R_\alpha^{(s)}\mathcal R_s.
\]
Therefore,
\[
R_\alpha^{(s)}\mathbf 1+\alpha^{-1}R_\alpha^{(s)}\mathcal R_s=\alpha^{-1},
\]
that is,
\[
\alpha R_\alpha^{(s)}\mathbf 1+R_\alpha^{(s)}\mathcal R_s=1.
\]
Since the left-hand side is continuous by Remark~\ref{rem:HtNs-continuity}, the identity holds pointwise on \(M\).
\end{proof}

\section{Nonlocal Characterization of Stochastic Completeness}
\subsection{Equivalence Between Classical and Fractional Conservativeness}
In this section we study the natural notion of stochastic completeness associated with the spectral fractional Laplacian \((-\Delta)^s\), namely the conservativeness of the subordinate semigroup
\[
T_t^{(s)}:=e^{-t(-\Delta)^s},\qquad t\ge0.
\]
By Remark \ref{rem:continuity-vs-smoothness-subordinate}, the function \(T_t^{(s)}\mathbf 1\) is bounded and continuous on \(M\) for every \(t>0\).
We say that the subordinate semigroup \(\{T_t^{(s)}\}_{t\ge0}\) is \emph{conservative} if
\[
T_t^{(s)}\mathbf 1=\mathbf 1
\qquad\text{for all }t>0.
\]

We begin with the first characterization theorem, which identifies the conservativeness of the subordinate semigroup with both the classical stochastic completeness of the heat semigroup and the preservation of total \(L^1\)-mass.

\begin{proof}[\textbf{Proof of Theorem \ref{prop:conservativeness-subordination}.}]
We first prove that for every \(t>0\) and every \(\varphi\in L^1(M),\,f\in L^{\infty}(M)\),
\[
\int_M \varphi \,T_t^{(s)}f\,dV_g
=
\int_M f\,T_t^{(s)}\varphi\,dV_g.
\]
Indeed, since
\[
T_t^{(s)}f(x)=\int_M p_t^{(s)}(x,y)f(y)\,dV_g(y),
\]
we obtain
\[
\int_M \varphi(x)\,T_t^{(s)}f(x)\,\,dV_g(x)
=
\int_M\int_M p_t^{(s)}(x,y)\,\varphi(x)f(y)\,dV_g(y)\,dV_g(x).
\]
Since \(p_t^{(s)}(x,y)\ge0\), \(\varphi\in L^1(M)\) and $f\in L^{\infty}(M)$, the integrand is absolutely integrable, so Fubini's theorem applies. Hence
\[
\int_M \varphi\,T_t^{(s)}f\,dV_g
=
\int_M\left(\int_M p_t^{(s)}(x,y)\,\varphi(x)\,dV_g(x)\right)f(y)dV_g(y).
\]
Using the symmetry \(p_t^{(s)}(x,y)=p_t^{(s)}(y,x)\), we get
\[
\int_M p_t^{(s)}(x,y)\,\varphi(x)\,dV_g(x)
=
\int_M p_t^{(s)}(y,x)\,\varphi(x)\,dV_g(x)
=
T_t^{(s)}\varphi(y),
\]
and therefore
\[
\int_M \varphi\,T_t^{(s)}f\,\,dV_g
=
\int_M f\,T_t^{(s)}\varphi\,dV_g.
\]

We next show that (i) implies (ii). Assume that
\[
P_t\mathbf 1=\mathbf 1
\qquad \forall\,t>0.
\]
Then, by the subordination formula and the fact that \(\eta_t^{(s)}\) is a probability density on \((0,\infty)\),
\[
T_t^{(s)}\mathbf 1
=
\int_0^\infty \eta_t^{(s)}(\tau)\,P_\tau\mathbf 1\,d\tau
=
\int_0^\infty \eta_t^{(s)}(\tau)\,d\tau
=
\mathbf 1
\]
for every \(t>0\). Thus (ii) holds.

We now prove that (ii) implies (i). Assume that
\[
T_t^{(s)}\mathbf 1=\mathbf 1
\qquad \forall\,t>0.
\]
Set
\[
u(\tau):=P_\tau\mathbf 1,
\qquad \tau>0.
\]
Since \(\{P_t\}_{t\ge0}\) is sub-Markovian, we have
\[
0\le u(\tau)\le 1
\qquad\text{for all }\tau>0.
\]
For each \(t>0\),
\[
\mathbf 1
=
T_t^{(s)}\mathbf 1
=
\int_0^\infty \eta_t^{(s)}(\tau)\,u(\tau)\,d\tau
=
\int_0^\infty \eta_t^{(s)}(\tau)\,d\tau.
\]
Hence
\[
\int_0^\infty \eta_t^{(s)}(\tau)\,\bigl(1-u(\tau)\bigr)\,d\tau=0.
\]
Since \(\eta_t^{(s)}(\tau)>0\) for almost every \(\tau>0\), it follows that
\[
u(\tau)=1
\qquad\text{for almost every }\tau>0.
\]
Finally, the map \(\tau\mapsto P_\tau\mathbf 1\) is continuous; see \cite[Theorem 7.16]{grigoryan2009heat}. Therefore,
\[
P_\tau\mathbf 1=\mathbf 1
\qquad \forall\,\tau>0,
\]
that is, (i) holds. Finally, we prove the equivalence between (ii) and (iii). If (ii) holds, then by the identity proved in the first part,
\[
\int_M T_t^{(s)}\varphi\,dV_g
=
\int_M T_t^{(s)}\mathbf 1\,\varphi\,dV_g
=
\int_M \varphi\,dV_g
\]
for every \(t>0\) and every \(\varphi\in L^1(M)\). Thus (iii) follows.

Conversely, if (iii) holds, then for every \(t>0\) and every \(\varphi\in L^1(M)\),
\[
\int_M T_t^{(s)}\mathbf 1\,\varphi\,dV_g
=
\int_M T_t^{(s)}\varphi\,dV_g
=
\int_M \varphi\,dV_g.
\]
Hence
\[
\int_M \bigl(T_t^{(s)}\mathbf 1-\mathbf 1\bigr)\varphi\,dV_g=0
\qquad \forall\,\varphi\in L^1(M).
\]
Therefore,
\[
T_t^{(s)}\mathbf 1=\mathbf 1
\qquad\text{almost everywhere on }M.
\]
Since \(T_t^{(s)}\mathbf 1\) is continuous, (ii) holds. This completes the proof.
\end{proof}

\subsection{Resolvent Characterization}

Similar to Remark \ref{rem:continuity-vs-smoothness-subordinate}, we know that $R_\alpha^{(s)}\mathbf 1$ is bounded and continuous on $M.$ Next, we prove the second characterization theorem.

\begin{proof}[\textbf{Proof of Theorem \ref{thm:fractional-conservative-equivalence}.}]
We prove
\[
\textnormal{(i)}\Longrightarrow \textnormal{(ii)}
\Longrightarrow \textnormal{(iii)}
\Longrightarrow \textnormal{(i)}.
\]

\medskip
\noindent\textbf{\textnormal{(i)} \(\Longrightarrow\) \textnormal{(ii)}.}
Assume that \(\{T_t^{(s)}\}_{t\ge0}\) is conservative. Since \(\mathbf 1\in L^\infty(M)\), by the
\(L^\infty\)-resolvent representation,
\[
R_\alpha^{(s)}\mathbf 1
=
\int_0^\infty e^{-\alpha t}T_t^{(s)}\mathbf 1\,dt
=
\int_0^\infty e^{-\alpha t}\,dt
=
\alpha^{-1}.
\]

\medskip
\noindent\textbf{\textnormal{(ii)} \(\Longrightarrow\) \textnormal{(iii)}.}
This is immediate from the definition
\[
v_\alpha=\mathbf 1-\alpha R_\alpha^{(s)}\mathbf 1.
\]

\medskip
\noindent\textbf{\textnormal{(iii)} \(\Longrightarrow\) \textnormal{(i)}.}
Assume that for every \(\alpha>0\),
\[
v_\alpha=\mathbf 1-\alpha R_\alpha^{(s)}\mathbf 1\equiv0.
\]
Then
\[
\alpha R_\alpha^{(s)}\mathbf 1=\mathbf 1,
\]
that is,
\[
\int_0^\infty \alpha e^{-\alpha t}T_t^{(s)}\mathbf 1\,dt=\mathbf 1
\qquad\text{in }L^\infty(M).
\]

Since \(\{T_t^{(s)}\}_{t\ge0}\) is sub-Markovian,
\[
0\le T_t^{(s)}\mathbf 1\le \mathbf 1
\qquad\text{a.e. on }M,\ \forall t>0.
\]
Hence, for almost every \(x\in M\),
\[
0\le 1-T_t^{(s)}\mathbf 1(x)\le 1
\qquad\forall t>0,
\]
and
\[
\int_0^\infty \alpha e^{-\alpha t}\bigl(1-T_t^{(s)}\mathbf 1(x)\bigr)\,dt=0.
\]
Since the integrand is nonnegative and the weight \(\alpha e^{-\alpha t}\) is strictly positive for every \(t>0\), it follows that
\[
T_t^{(s)}\mathbf 1(x)=1
\qquad\text{for a.e. }t>0.
\]
By the integral representation
\[
T_t^{(s)}\mathbf1(x)=\int_M p_t^{(s)}(x,y)\,dV_g(y),
\]
the map \(t\mapsto T_t^{(s)}\mathbf1(x)\) is continuous on \((0,\infty)\), hence
\[
T_t^{(s)}\mathbf 1=\mathbf 1
\qquad\text{for all }t>0.
\]
Thus \(\{T_t^{(s)}\}_{t\ge0}\) is conservative.
\end{proof}

\subsection{Nonlocal Zero-Mean Characterization}

In this subsection, we first show that for every \(\varphi\in C_c^\infty(M)\),
\[
(-\Delta)^s\varphi\in C^\infty(M)\cap L^1(M).
\]
This ensures that the global integral
\[
\int_M (-\Delta)^s\varphi\,dV_g
\]
is well defined for compactly supported test functions. We then prove the main zero-mean characterization, namely, that the conservativeness of the subordinate semigroup is equivalent to the identity
\[
\int_M (-\Delta)^s\varphi\,dV_g=0
\qquad\forall\,\varphi\in C_c^\infty(M).
\]

\begin{lemma}\label{lem:frac-lap-smooth-compact-support}
Let \((M,g)\) be a complete Riemannian manifold, let \(0<s<1\), and let
\(\varphi\in C_c^\infty(M)\). Then
\[
(-\Delta)^s\varphi\in C^\infty(M).
\]
\end{lemma}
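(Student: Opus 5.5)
We will use the Bochner representation of the fractional Laplacian,
\[
(-\Delta)^s\varphi
=
\frac{s}{\Gamma(1-s)}\int_0^\infty \frac{\varphi-P_\tau\varphi}{\tau^{1+s}}\,d\tau ,
\]
and show that this integral converges in every local $C^m$ norm. Fix a chart $U\Subset M$, a compact set $K\Subset U$, and an integer $m\ge0$. It then suffices to show
\[
\int_0^\infty \tau^{-1-s}\,\|\varphi-P_\tau\varphi\|_{C^m(K)}\,d\tau<\infty .
\]
Once this holds, the Bochner integral converges in $C^m(K)$ for every $m$. Its limit coincides a.e. with the $L^2$-valued Bochner integral, since both are limits of the same truncated integrals $\int_\varepsilon^{R}$, and $C^m(K)$ convergence implies $L^2(K)$ convergence. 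Hence $(-\Delta)^s\varphi$ has a $C^\infty$ representative.

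We split the $\tau$-integral at $\tau=1$.

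\textbf{Large times, $\tau\ge1$.} We use the local derivative estimate from \cite[Theorem 7.6]{grigoryan2009heat}, already invoked in the proof of Proposition~\ref{lem:subordinate-semigroup-smoothing}:
\[
\|P_\tau\varphi\|_{C^m(K)}\le C(1+\tau^{-\sigma})\|\varphi\|_{L^2(M)} .
\]
For $\tau\ge1$ this gives a uniform bound, and trivially $\|\varphi\|_{C^m(K)}<\infty$. The contribution is therefore dominated by $C\int_1^\infty\tau^{-1-s}\,d\tau<\infty$.

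\textbf{Small times, $0<\tau\le1$.} This is the main obstacle: we need $\|\varphi-P_\tau\varphi\|_{C^m(K)}\le C\tau$. Since $\varphi\in C_c^\infty(M)\subset\operatorname{Dom}(\Delta^k)$ for every $k$, and $\Delta^k\varphi\in C_c^\infty(M)$, we can write
\[
\varphi-P_\tau\varphi=-\int_0^\tau P_r\Delta\varphi\,dr
\qquad\text{in }L^2(M).
\]
It remains to bound $\sup_{0<r\le1}\|P_r\psi\|_{C^m(K)}$ for $\psi=\Delta\varphi\in C_c^\infty(M)$. The estimate of Theorem 7.6 blows up as $r\to0$, so we avoid it. Instead we use local Sobolev embedding on $K$: for $k>\frac m2+\frac n4$,
\[
\|u\|_{C^m(K)}\le C\bigl(\|u\|_{L^2(U)}+\|\Delta^k u\|_{L^2(U)}\bigr),
\]
which follows from interior elliptic regularity of $\Delta^k$ in the chart. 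Applying this with $u=P_r\psi$ and using $\Delta^kP_r\psi=P_r\Delta^k\psi$ together with $L^2$-contractivity, we get
\[
\|P_r\psi\|_{C^m(K)}\le C\bigl(\|\psi\|_{L^2}+\|\Delta^k\psi\|_{L^2}\bigr),
\]
uniformly in $r$. The same elliptic estimate justifies moving $C^m(K)$ norms inside the $dr$-integral. This yields
\[
\|\varphi-P_\tau\varphi\|_{C^m(K)}\le C_\varphi\,\tau ,
\]
so the small-time part is bounded by $C_\varphi\int_0^1\tau^{-s}\,d\tau<\infty$ because $s<1$.

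Combining the two ranges, and since $m$, $K$ and $U$ are arbitrary, we conclude that $(-\Delta)^s\varphi\in C^\infty(M)$.
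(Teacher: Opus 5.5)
Your proposal is correct and follows essentially the same route as the paper. Both arguments use the Bochner formula split at \(\tau=1\), the local derivative estimate of Grigor'yan's Theorem 7.6 for large times, and the identity \(\varphi-P_\tau\varphi=-\int_0^\tau P_r\Delta\varphi\,dr\) to get an \(O(\tau)\) bound in \(C^m(K)\) for small times; the only real difference is that you derive the uniform bound on \(\|P_r\Delta\varphi\|_{C^m(K)}\) from the interior elliptic estimate for \(\Delta^k\), the commutation \(\Delta^kP_r=P_r\Delta^k\) and \(L^2\)-contractivity, which justifies directly what the paper takes from the convergence \(e^{\tau\Delta}\varphi\to\varphi\) in \(C^\infty(K)\) (and, being uniform in all \(r>0\), would even make the large-time estimate unnecessary).
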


\begin{proof}
By the Bochner formula,
\[
(-\Delta)^s\varphi
=
\frac{s}{\Gamma(1-s)}
\int_0^\infty \frac{\varphi-e^{t\Delta}\varphi}{t^{1+s}}\,dt,
\]
where the integral is understood pointwise since $\varphi\in C_c^\infty(M).$

Fix $K$ is any compact set that is contained in a chart $U$, and let \(D\) be any differential operator on \(U\).
We shall prove that
\[
D(-\Delta)^s\varphi(x)
=
\frac{s}{\Gamma(1-s)}
\int_0^\infty \frac{D\varphi(x)-De^{t\Delta}\varphi(x)}{t^{1+s}}\,dt
\qquad\text{for }x\in U,
\]
and that the right-hand side defines a continuous function on \(K\), this yields \((-\Delta)^s\varphi\in C^\infty(M)\).

Write
\[
(-\Delta)^s\varphi
=
\frac{s}{\Gamma(1-s)}
\left(
\int_0^1 \frac{\varphi-e^{t\Delta}\varphi}{t^{1+s}}\,dt
+
\int_1^\infty \frac{\varphi-e^{t\Delta}\varphi}{t^{1+s}}\,dt
\right)
=: \frac{s}{\Gamma(1-s)}(I_1+I_2).
\]

We study the two terms separately.

\medskip
\noindent\textbf{Step 2: the large-time part \(I_2\).}
Since \(\varphi\in L^2(M)\), \cite[Theorem 7.6]{grigoryan2009heat} yields that for every \(t>0\),
\[
e^{t\Delta}\varphi\in C^\infty(M),
\]
and moreover, for every compact set \(K\Subset U\),
\[
\sup_{x\in K}|D(e^{t\Delta}\varphi)(x)|
\le F_{K,D}(t)\|\varphi\|_{L^2(M)},
\]
where \(F_{K,D}(t)\) is uniformly bounded on \([1,\infty)\), and thus
\[
|D\varphi(x)-D(e^{t\Delta}\varphi)(x)|
\]
is bounded in \((t,x)\in [1,\infty)\times K\). Since \(t^{-1-s}\in L^1(1,\infty)\), it follows that
for every compact \(K\Subset U\),
\[
\frac{|D\varphi(x)-D(e^{t\Delta}\varphi)(x)|}{t^{1+s}}
\]
admits an integrable dominating function on \((1,\infty)\), uniformly for \(x\in K\).

\medskip
\noindent\textbf{Step 3: the small-time part \(I_1\).}
Since \(\varphi\in C_c^\infty(M)\subset \operatorname{Dom}(-\Delta)\), the map
\[
t\longmapsto e^{t\Delta}\varphi
\]
is differentiable in \(L^2(M)\), and
\[
\frac{d}{dt}e^{t\Delta}\varphi=\Delta e^{t\Delta}\varphi.
\]
Therefore,
\[
e^{t\Delta}\varphi-\varphi
=
\int_0^t\Delta e^{\tau\Delta}\varphi\,d\tau
\qquad\text{in }L^2(M).
\]
Applying \(D\), we obtain
\[
D(e^{t\Delta}\varphi-\varphi)
=
\int_0^t D\Delta e^{\tau\Delta}\varphi\,d\tau
\]
pointwise on \(K\), since \(e^{\tau\Delta}\varphi\) is smooth for \(\tau>0\), see \cite[Theorem 2.4]{pazy2012semigroups}.

Now we claim that there exist \(C_{K,D,\varphi}>0\) and \(t_0\in(0,1]\) such that
\[
\sup_{x\in K}|D\varphi(x)-D(e^{t\Delta}\varphi)(x)|
\le C_{K,D,\varphi}\, t
\qquad\forall\,0<t\le t_0.
\]
Indeed, by \cite[Theorem 7.20]{grigoryan2009heat} and the smoothness of the heat kernel, for every \(\tau>0\) the function
\(D\Delta e^{\tau\Delta}\varphi\) is smooth on \(U\). Moreover, as \(\tau\downarrow0\),
\[
e^{\tau\Delta}\varphi\to\varphi
\qquad\text{in }C^\infty(K),
\]
hence
\[
D\Delta e^{\tau\Delta}\varphi \to D\Delta\varphi
\qquad\text{uniformly on }K.
\]
Therefore there exist \(t_0\in(0,1]\) and \(C_{K,D,\varphi}>0\) such that
\[
\sup_{0\le \tau\le t_0}\sup_{x\in K}|D\Delta e^{\tau\Delta}\varphi(x)|
\le C_{K,D,\varphi}.
\]
Consequently,
\[
\sup_{x\in K}|D\varphi(x)-D(e^{t\Delta}\varphi)(x)|
\le
\int_0^t \sup_{x\in K}|D\Delta e^{\tau\Delta}\varphi(x)|\,d\tau
\le C_{K,D,\varphi}\,t
\]
for all \(0<t\le t_0\). Enlarging the constant if necessary, the same estimate holds for all \(0<t\le1\).

It follows that for \(x\in K\) and \(0<t\le1\),
\[
\frac{|D\varphi(x)-D(e^{t\Delta}\varphi)(x)|}{t^{1+s}}
\le
C_{K,D,\varphi}\, t^{-s}.
\]
Since \(0<s<1\), the function \(t^{-s}\) is integrable on \((0,1)\). 

Combining the small-time and large-time parts, by the dominated convergence theorem, we conclude that for every differential operator \(D\) on \(U\),
\[
D(-\Delta)^s\varphi(x)
=
\frac{s}{\Gamma(1-s)}
\int_0^\infty \frac{D\varphi(x)-D(e^{t\Delta}\varphi)(x)}{t^{1+s}}\,dt
\]
exists and defines a continuous function on \(U\). Since \(U\Subset M\) and \(D\) were arbitrary, this shows that
\[
(-\Delta)^s\varphi\in C^\infty(M).
\]
Thus, we complete the proof.
\end{proof}

To prove the $(-\Delta)^s\varphi\in L^1(M),\,\varphi\in C_c^{\infty}(M)$, we need the following well known result. 

\begin{proposition}\cite[Theorem 3]{grigor1994integral}\label{prop:heat-tail-integral}
Let \((M,g)\) be a complete Riemannian manifold, let \(A\subset M\) be a compact set, and let \(R>0\).
Define
\[
A^R := \{ x \in M : d(x, A) < R \}
\]
Then, for every \(t>0\),
\begin{equation}\label{eq:heat-tail-estimate}
\int_A\int_{M\setminus A^R} p(t,x,y)\,dV_g(y)\,dV_g(x)
\le
\sqrt{\mathrm{Vol}_g(A)\mathrm{Vol}_g(A^R\setminus A)}
\max\!\left\{\frac{R}{\sqrt{2t}},\frac{\sqrt{2t}}{R}\right\}
\exp\!\left(-\frac{R^2}{4t}+\frac12\right).
\end{equation}
\end{proposition}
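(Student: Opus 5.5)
The estimate \eqref{eq:heat-tail-estimate} is quoted from \cite[Theorem 3]{grigor1994integral}. If I had to reprove it, I would use Grigor'yan's \emph{integral maximum principle} together with a Lipschitz cutoff across the annulus \(A^R\setminus A\).

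\textbf{Reduction by symmetry.} Set \(B:=M\setminus A^R\) and \(u(\tau):=P_\tau\mathbf 1_A\), which lies in \(L^1\cap L^2\) and is smooth for \(\tau>0\). By the symmetry of \(p\) and Tonelli, the left-hand side of \eqref{eq:heat-tail-estimate} equals \(\int_B u(t)\,dV_g\).

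\textbf{Cutoff and the flux through the annulus.} Let \(\psi(x):=\min\{d(x,A)/R,1\}\). It is Lipschitz with \(|\nabla\psi|\le 1/R\), vanishes on \(A\), equals \(1\) on \(B\), and has \(\nabla\psi\) supported in \(A^R\setminus A\). Then
\[
\int_B u(t)\le \int_M \psi\,u(t)=\int_0^t\frac{d}{d\tau}\int_M\psi\,u(\tau)\,d\tau=-\int_0^t\!\!\int_M\nabla\psi\cdot\nabla u\,d\tau .
\]
The initial term vanishes because \(\psi u(0)=\psi\mathbf 1_A=0\). Since \(\psi\) is not compactly supported, this integration by parts must be justified by multiplying with cutoffs \(\chi_k\uparrow 1\) (available by completeness), using \(u\in L^1\), and the energy bounds below. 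Cauchy--Schwarz on the annulus gives
\[
\int_B u(t)\le \frac{\sqrt{\mathrm{Vol}_g(A^R\setminus A)}}{R}\int_0^t\|\nabla u(\tau)\|_{L^2(A^R\setminus A)}\,d\tau .
\]

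\textbf{Weighted energy estimate (main step).} Use the integral maximum principle: if \(\xi\) satisfies \(\partial_\tau\xi+\tfrac12|\nabla\xi|^2\le0\), then \(\tau\mapsto\int_M u^2e^{\xi}\) is nonincreasing. Choose \(\xi(\tau,x)=d(x,A)^2/\bigl(2(\tau-T)\bigr)\) with a parameter \(T>t\). At \(\tau=0\) this yields \(\int_M u^2e^{\xi}\le \mathrm{Vol}_g(A)\), since \(\xi(0)=0\) on \(A\). On the annulus the weight is comparable to \(e^{-d^2/(2(T-\tau))}\), which gives Gaussian smallness of \(u\) there. To get at \(\nabla u\), I would not estimate \(\int_0^t\|\nabla u\|\) crudely. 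Instead I would insert the weight into the flux: differentiate \(\int u^2e^{\xi}\), keep the dissipated term \(\int|\nabla u|^2e^{\xi}\), and integrate in \(\tau\). This gives a weighted Caccioppoli-type bound
\[
\int_0^t\!\!\int_{A^R\setminus A}|\nabla u|^2\,d\tau\lesssim \mathrm{Vol}_g(A)\,e^{-R^2/(2(T-t))}\,(\text{factor in }t,T,R).
\]
Combined with Cauchy--Schwarz in \(\tau\), this turns the previous display into a bound of the form
\[
\sqrt{\mathrm{Vol}_g(A)\mathrm{Vol}_g(A^R\setminus A)}\;F(t,T,R)\,e^{-R^2/(4(T-t))}.
\]

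\textbf{Optimizing the parameter.} Finally, choose \(T\) to optimize \(F\) against the exponential. Taking \(T-t\) comparable to \(t\) when \(R^2\gtrsim t\), and adjusting otherwise, should produce the factor \(\max\{R/\sqrt{2t},\sqrt{2t}/R\}\,e^{-R^2/4t+1/2}\).

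\textbf{Main obstacle.} I expect the hard part to be the weighted energy step: obtaining the sharp exponent \(R^2/4t\) (rather than \(R^2/8t\)) and the exact constant \(e^{1/2}\) requires carefully balancing the Gaussian weight against the flux term. The noncompactness of \(B\), by contrast, is routine once cutoff arguments based on completeness are in place.
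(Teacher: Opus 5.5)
The paper gives no proof of this statement; it is quoted verbatim from Grigor'yan's integral maximum principle paper. Your sketch therefore has to stand on its own, and as written it has a genuine gap at the very first inequality.

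\textbf{The linear cutoff destroys the Gaussian factor.} With $\psi=\min\{d(x,A)/R,1\}$, the bound $\int_B u(t)\le\int_M\psi\,u(t)$ already throws away the factor $e^{-R^2/(4t)}$. Indeed $\int_M\psi\,u(t)\ge R^{-1}\int_{A^R\setminus A}d(x,A)\,u(t,x)\,dV_g(x)$, and this contains the heat sitting just outside $\partial A$. In $\mathbb R^n$ with $A$ the closed unit ball and $R$ fixed, $u(t,x)\approx\frac12\operatorname{erfc}\bigl((|x|-1)/(2\sqrt t)\bigr)$ near $\partial A$. Hence $\int_M\psi\,u(t)\sim|\partial A|\,t/(2R)$ as $t\downarrow0$, whereas the right-hand side of \eqref{eq:heat-tail-estimate} is $O(t^{-1/2}e^{-R^2/(4t)})$. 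No later treatment of the flux can recover the exponential. Correspondingly, your bound on $\int_0^t\int_{A^R\setminus A}|\nabla u|^2$ cannot hold with any factor that leaves Gaussian decay: this energy is of order $\sqrt t\,|\partial A|$ and is concentrated near $\partial A$.

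\textbf{The weight has the wrong sign.} For $u=P_\tau\mathbf 1_A$, take $\xi=d(x,A)^2/(2(\tau-T))\le0$ with $\xi(0)=0$ on $A$. Monotonicity then only gives $\int_M u(t)^2e^{-d(x,A)^2/(2(T-t))}\le\mathrm{Vol}_g(A)$. This already follows from $L^2$-contractivity and says nothing about smallness of $u$ on the annulus.

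\textbf{A minor point.} The identity $\int\psi u(t)=-\int_0^t\!\int\nabla\psi\cdot\nabla u$ is only an inequality on stochastically incomplete $M$; the defect is the mass $\int_A(1-P_t\mathbf 1)$ lost at infinity. That sign is favourable, so this part is harmless.

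\textbf{A route that works: dualize.} Let $w(\tau)=P_\tau\mathbf 1_{M\setminus A^R}$. Then the left-hand side equals $\int_A w(t)\le\mathrm{Vol}_g(A)^{1/2}\|w(t)\|_{L^2(A)}$.
\begin{itemize}
\item Apply the integral maximum principle to $J(\tau)=\int_M w^2\eta^2e^{\xi}$, where $\eta=h(d(\cdot,A))$, $h(0)=1$ and $h=0$ on $[R,\infty)$. Then $J(0)=0$ and everything is compactly supported.
\item Split the cross terms with a parameter $\varepsilon>0$. This gives $J'\le2(1+\varepsilon^{-1})\int_M w^2|\nabla\eta|^2e^{\xi}$ whenever $\partial_\tau\xi+\tfrac{1+\varepsilon}{2}|\nabla\xi|^2\le0$.
\item Take $\xi=-\lambda\,d(x,A)+\tfrac{1+\varepsilon}{2}\lambda^2(t-\tau)$, which vanishes on $A$ at time $t$. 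On the annulus use only $0\le w\le1$; this is where $\mathrm{Vol}_g(A^R\setminus A)$ enters.
\item The essential choice is $h'(r)=-c\,e^{\lambda r/2}$ rather than linear, so that $|\nabla\eta|$ lives where $e^{\xi}$ is small. This yields $\int_A w(t)^2\le\varepsilon^{-1}\mathrm{Vol}_g(A^R\setminus A)\bigl(e^{(1+\varepsilon)\lambda^2t/2}-1\bigr)\bigl(e^{\lambda R/2}-1\bigr)^{-2}$.
\item Choosing $\lambda=R/((1+\varepsilon)t)$ and $\varepsilon=2t/R^2$ gives, up to lower-order corrections, the exponent $R^2/(4t)$ with prefactor $(R/\sqrt{2t})\,e^{1/2}$. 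This is the stated bound for $R^2\ge2t$.
\end{itemize}

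It is this $\varepsilon$-splitting, not a balancing of the Gaussian against the flux, that avoids losing the factor $2$ in the exponent. Your flux formulation could be rescued the same way, using a positive weight $a\,d(x,A)^2/\tau$ with $a<\tfrac12$ together with a Gaussian-adapted cutoff profile. It cannot be rescued with a linear cutoff.
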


\begin{lemma}\label{lem:fraclap-test-L1}
Let \((M,g)\) be a complete Riemannian manifold, let \(0<s<1\), and let \(\varphi\in C_c^\infty(M)\). Then
\[
(-\Delta)^s\varphi\in L^1(M).
\]
\end{lemma}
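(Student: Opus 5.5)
Fix a compact set $A\supset\operatorname{supp}\varphi$. The plan is to split $M$ into a bounded neighbourhood of $A$ and its complement. On the neighbourhood, smoothness of $(-\Delta)^s\varphi$ gives local integrability. Off the neighbourhood, $\varphi$ vanishes, so only the heat flow out of $A$ contributes, and this is controlled by Proposition~\ref{prop:heat-tail-integral}.

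\textbf{Near part.} Take $R=1$ and the set $A^1=\{d(\cdot,A)<1\}$, which is relatively compact because $M$ is complete (Hopf--Rinow). By Lemma~\ref{lem:frac-lap-smooth-compact-support}, $(-\Delta)^s\varphi$ is continuous. Hence it is bounded on $\overline{A^1}$, and
\[
\int_{A^1}|(-\Delta)^s\varphi|\,dV_g<\infty .
\]

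\textbf{Far part.} For $x\notin A^1$ we have $\varphi(x)=0$, so the Bochner formula reads
\[
(-\Delta)^s\varphi(x)=-\frac{s}{\Gamma(1-s)}\int_0^\infty \frac{P_t\varphi(x)}{t^{1+s}}\,dt,
\qquad |P_t\varphi(x)|\le \|\varphi\|_{L^\infty}\int_A p(t,x,y)\,dV_g(y).
\]
By Tonelli's theorem and the symmetry of the heat kernel,
\[
\int_{M\setminus A^1}|(-\Delta)^s\varphi|\,dV_g\le C\|\varphi\|_{L^\infty}\int_0^\infty t^{-1-s}\,J(t)\,dt,
\qquad J(t):=\int_A\int_{M\setminus A^1}p(t,x,y)\,dV_g(y)\,dV_g(x).
\]
We bound $J(t)$ in two regimes.

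For $t\ge1$, sub-Markovianity gives $J(t)\le \mathrm{Vol}_g(A)$, so $t^{-1-s}J(t)$ is integrable on $[1,\infty)$.

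For $0<t<1$, Proposition~\ref{prop:heat-tail-integral} with $R=1$ gives
\[
J(t)\le C_A\,\frac{1}{\sqrt{2t}}\,e^{-1/(4t)},
\]
where $C_A$ is finite because $\mathrm{Vol}_g(A^1\setminus A)<\infty$. This decays faster than any power of $t$ as $t\downarrow0$, so it absorbs the singular weight $t^{-1-s}$ on $(0,1)$.

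\textbf{Main obstacle.} The delicate point is justifying the pointwise Bochner representation for $x\notin A^1$ and the interchange of the $t$- and $x$-integrals. Tonelli handles this: the integrand $t^{-1-s}p(t,x,y)|\varphi(y)|$ is nonnegative, and its $dt$-integral equals $\frac{\Gamma(1-s)}{s}K_s(x,y)|\varphi(y)|$. The pointwise identity then follows either from the well-definedness of the Bochner formula for test functions, established earlier in the paper, or from the decomposition~\eqref{two equivalent}, which for $x\notin\operatorname{supp}\varphi$ gives
\[
(-\Delta)^s\varphi(x)=-\int_M\varphi(y)K_s(x,y)\,dV_g(y).
\]
Combining the near and far parts yields $(-\Delta)^s\varphi\in L^1(M)$.
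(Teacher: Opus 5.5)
Your proposal is correct and follows essentially the same route as the paper. You split off the neighbourhood $A^R$ of the support and use smoothness there; on the complement you write $(-\Delta)^s\varphi=-\frac{s}{\Gamma(1-s)}\int_0^\infty t^{-1-s}P_t\varphi\,dt$ and control the integrated heat flow out of $A$ by Grigor'yan's tail estimate for $t<1$ and by sub-Markovianity for $t\ge1$. Your appeal to Hopf--Rinow for the relative compactness of $A^1$, which gives boundedness of $(-\Delta)^s\varphi$ there, is a slightly more careful justification of the near part than the paper's appeal to finite volume alone.
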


\begin{proof}
Let $A:=\operatorname{supp}\varphi.$ Choose \(R>0\) and set $U:=A^R.$ Then \(A\Subset U\Subset M\). Since \(\varphi\in C_c^\infty(M)\), by Lemma~\ref{lem:frac-lap-smooth-compact-support} we know that
\[
(-\Delta)^s\varphi\in C^\infty(M).
\]
In particular,
\[
(-\Delta)^s\varphi\in L^1(U),
\]
because \(U\) has finite volume. Therefore, it remains to prove that
\[
(-\Delta)^s\varphi\in L^1(M\setminus U).
\]

For every \(x\in M\setminus U\), since \(x\notin \operatorname{supp}\varphi\), we have \(\varphi(x)=0\). Hence, by the Bochner formula,
\[
(-\Delta)^s\varphi(x)
=
\frac{s}{\Gamma(1-s)}
\int_0^\infty \frac{\varphi(x)-e^{t\Delta}\varphi(x)}{t^{1+s}}\,dt
=
-\frac{s}{\Gamma(1-s)}
\int_0^\infty \frac{e^{t\Delta}\varphi(x)}{t^{1+s}}\,dt.
\]
Therefore,
\[
|(-\Delta)^s\varphi(x)|
\le
\frac{s}{\Gamma(1-s)}
\int_0^\infty \frac{|e^{t\Delta}\varphi(x)|}{t^{1+s}}\,dt.
\]
Integrating over \(M\setminus U\) and using Fubini's theorem, we obtain
\[
\int_{M\setminus U}|(-\Delta)^s\varphi(x)|\,dV_g(x)
\le
\frac{s}{\Gamma(1-s)}
\int_0^\infty \frac{1}{t^{1+s}}
\left(
\int_{M\setminus U}|e^{t\Delta}\varphi(x)|\,dV_g(x)
\right)\,dt.
\]

Now
\[
e^{t\Delta}\varphi(x)
=
\int_M p(t,x,y)\varphi(y)\,dV_g(y)
=
\int_A p(t,x,y)\varphi(y)\,dV_g(y),
\]
so that
\[
|e^{t\Delta}\varphi(x)|
\le
\int_A p(t,x,y)|\varphi(y)|\,dV_g(y).
\]
Hence
\[
\int_{M\setminus U}|e^{t\Delta}\varphi(x)|\,dV_g(x)
\le
\int_{M\setminus U}\int_A p(t,x,y)|\varphi(y)|\,dV_g(y)\,dV_g(x).
\]
By Fubini's theorem and the symmetry of the heat kernel,
\[
\int_{M\setminus U}|e^{t\Delta}\varphi(x)|\,dV_g(x)
\le
\int_A |\varphi(y)|
\left(
\int_{M\setminus U} p(t,x,y)\,dV_g(x)
\right)\,dV_g(y).
\]
Since \(U=A^R\), we have \(M\setminus U=M\setminus A^R\), and therefore
\[
\int_{M\setminus U}|e^{t\Delta}\varphi(x)|\,dV_g(x)
\le
\|\varphi\|_{L^\infty(M)}
\int_A\int_{M\setminus A^R} p(t,x,y)\,dV_g(y)\,dV_g(x).
\]
Applying \cite[Theorem 3]{grigor1994integral}, we infer that
\[
\int_{M\setminus U}|e^{t\Delta}\varphi(x)|\,dV_g(x)
\le
C_{\varphi,A,R}
\max\!\left\{\frac{R}{\sqrt{2t}},\frac{\sqrt{2t}}{R}\right\}
\exp\!\left(-\frac{R^2}{4t}+\frac12\right),
\]
where
\[
C_{\varphi,A,R}:=
\|\varphi\|_{L^\infty(M)}\sqrt{\mathrm{Vol}_g(A)\mathrm{Vol}_g(A^R\setminus A)}.
\]

Substituting this estimate into the previous inequality yields
\[
\int_{M\setminus U}|(-\Delta)^s\varphi(x)|\,dV_g(x)
\le
\frac{s\,C_{\varphi,A,R}}{\Gamma(1-s)}
\int_0^\infty
\frac{1}{t^{1+s}}
\max\!\left\{\frac{R}{\sqrt{2t}},\frac{\sqrt{2t}}{R}\right\}
\exp\!\left(-\frac{R^2}{4t}+\frac12\right)\,dt.
\]
It remains to check that the \(t\)-integral is finite.

For \(0<t\le1\), we have
\[
\max\!\left\{\frac{R}{\sqrt{2t}},\frac{\sqrt{2t}}{R}\right\}
\le C_R\, t^{-1/2},
\]
hence
\[
\frac{1}{t^{1+s}}
\max\!\left\{\frac{R}{\sqrt{2t}},\frac{\sqrt{2t}}{R}\right\}
\exp\!\left(-\frac{R^2}{4t}\right)
\le
C_R\, t^{-3/2-s} e^{-R^2/(4t)}.
\]
Since the exponential factor dominates every negative power of \(t\) as \(t\downarrow0\), it follows that
\[
\int_0^1
t^{-1-s}
\max\!\left\{\frac{R}{\sqrt{2t}},\frac{\sqrt{2t}}{R}\right\}
\exp\!\left(-\frac{R^2}{4t}+\frac12\right)\,dt
<\infty.
\]

For \(t\ge1\), we have
\[
\int_{M\setminus U}|e^{t\Delta}\varphi(x)|\,dV_g(x)
\le
\int_A |\varphi(y)|\left(\int_M p(t,x,y)\,dV_g(x)\right)dV_g(y)
\le
\|\varphi\|_{L^1(M)}.
\]
Hence
\[
\int_1^\infty \frac{1}{t^{1+s}}
\left(
\int_{M\setminus U}|e^{t\Delta}\varphi(x)|\,dV_g(x)
\right)\,dt
\le
\|\varphi\|_{L^1(M)}\int_1^\infty t^{-1-s}\,dt
<\infty.
\]

Combining the small-time estimate with the large-time estimate, we conclude that
\[
\int_{M\setminus U}|(-\Delta)^s\varphi(x)|\,dV_g(x)<\infty.
\]
Since \((-\Delta)^s\varphi\in L^1(U)\), it follows that
\[
(-\Delta)^s\varphi\in L^1(M).
\]
This completes the proof.
\end{proof}

Now we prove the third characterization theorem.

\begin{proof}[\textbf{Proof of Theorem \ref{thm:conservative-characterization-zero-mean}.}]
We first prove that \textnormal{(i)} implies \textnormal{(ii)}. Fix $\varphi\in C_c^\infty(M)$, by Proposition \ref{prop:fractional-semigroup-Lp}, we can define
\[
F(t):=\int_M T_t^{(s)}\varphi\,dV_g,
\qquad t\ge0.
\]
Since $\varphi\in L^1(M)$ and the semigroup is conservative, Theorem~\ref{prop:conservativeness-subordination} yields
\[
F(t)=\int_M \varphi\,dV_g
\qquad\text{for all }t>0.
\]
Hence $F$ is constant on $(0,\infty)$, and therefore
\[
\lim_{t\downarrow0}\frac{F(t)-F(0)}{t}=0.
\]
On the other hand, by Lemma~\ref{lem:fraclap-test-L1}, Proposition~\ref{prop:fractional-semigroup-Lp}, and \cite[Section 6.1]{davies2007linear}, we have
\[
\frac{T_t^{(s)}\varphi-\varphi}{t}\to -(-\Delta)^s\varphi
\qquad\text{in }L^1(M)
\quad\text{as }t\downarrow0.
\]
Therefore,
\[
0
=
\lim_{t\downarrow0}\int_M \frac{T_t^{(s)}\varphi-\varphi}{t}\,dV_g
=
-\int_M (-\Delta)^s\varphi\,dV_g.
\]
This proves \textnormal{(ii)}.

We now prove that \textnormal{(ii)} implies \textnormal{(i)}. Suppose by contradiction that
\(\{T_t^{(s)}\}_{t\ge0}\) is not conservative. Then, by
Theorem~\ref{prop:conservativeness-subordination}, the heat semigroup
\(\{P_t\}_{t\ge0}\) is not conservative either. Hence there exist \(t_0>0\) and
\(x_0\in M\) such that
\[
P_{t_0}\mathbf 1(x_0)<1.
\]
By continuity of \(x\mapsto P_{t_0}\mathbf 1(x)\), there exists a nonempty open set
\(U\Subset M\) such that
\[
P_{t_0}\mathbf 1(x)<1
\qquad\text{for all }x\in U.
\]
Choose nontrivial \(\varphi\in C_c^\infty(M)\) such that
\[
\varphi\ge0,\qquad\operatorname{supp}\varphi\subset U.\]
Using the symmetry of the heat kernel, we obtain
\[
\int_M P_{t_0}\varphi\,dV_g
=
\int_M \varphi\,P_{t_0}\mathbf 1\,dV_g
<
\int_M \varphi\,dV_g.
\]
Define
\[
F(t):=\int_M P_t\varphi\,dV_g-\int_M \varphi\,dV_g,
\qquad t>0.
\]
Since \(\varphi\ge0\) and the heat semigroup is \(L^1(M)\)-contractive, we have
\[
\int_M P_t\varphi\,dV_g=\|P_t\varphi\|_{L^1(M)}
\le \|\varphi\|_{L^1(M)}
=\int_M \varphi\,dV_g,
\]
and hence
\[
F(t)\le0
\quad\text{for all }t>0,\quad F(t_0)<0.
\]
By the Bochner representation,
\[
(-\Delta)^s\varphi
=
\frac{s}{\Gamma(1-s)}
\int_0^\infty (\varphi-P_t\varphi)\,\frac{dt}{t^{1+s}}\in L^1(M).
\]
Integrating over \(M\) and by the Fubini's theorem, we get
\[
\int_M (-\Delta)^s\varphi\,dV_g
=
-\frac{s}{\Gamma(1-s)}
\int_0^\infty F(t)\,\frac{dt}{t^{1+s}}.
\]
Since \(F(t)\le0\) for all \(t>0\), \(F(t_0)<0\), $F$ is continuous, it follows that
\[
\int_M (-\Delta)^s\varphi\,dV_g>0,
\]
which contradicts \textnormal{(ii)}. Therefore \(\{T_t^{(s)}\}_{t\ge0}\) must be conservative.
\end{proof}

The following simple observation shows that the heat-kernel part has zero mean. In particular, combined with the decomposition
\[
(-\Delta)^s\varphi = (-\Delta)_{\mathrm{hk}}^s\varphi+\mathcal R_s\,\varphi,
\]
it immediately yields an explicit formula for
\[
\int_M (-\Delta)^s\varphi\,dV_g.
\]

\begin{lemma}\label{lem:hk-zero-mean}
Let \((M,g)\) be a complete Riemannian manifold, let \(0<s<1\), and let
\(\varphi\in C_c^\infty(M)\). Then
\[
\int_M (-\Delta)_{\mathrm{hk}}^s\varphi\,dV_g=0.
\]
\end{lemma}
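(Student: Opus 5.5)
The identity follows from the symmetry of the kernel \(K_s\) by an antisymmetrization argument, once absolute integrability on \(M\times M\) is established. Write
\[
\int_M (-\Delta)_{\mathrm{hk}}^s\varphi\,dV_g
=\int_M\int_M \bigl(\varphi(x)-\varphi(y)\bigr)K_s(x,y)\,dV_g(y)\,dV_g(x).
\]
Since \(p(t,x,y)=p(t,y,x)\) by Proposition~\ref{prop:heat-kernel-basic}, the definition \eqref{frackernel} gives \(K_s(x,y)=K_s(y,x)\). Hence, if
\[
J:=\int_M\int_M |\varphi(x)-\varphi(y)|\,K_s(x,y)\,dV_g(y)\,dV_g(x)<\infty,
\]
Fubini's theorem allows us to exchange \(x\) and \(y\). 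The double integral then equals its own negative, and so it vanishes.

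The main step is therefore to prove \(J<\infty\). I would insert the definition of \(K_s\) and use Tonelli to write
\[
J=\frac{s}{\Gamma(1-s)}\int_0^\infty \frac{G(t)}{t^{1+s}}\,dt,\qquad
G(t):=\int_M\int_M |\varphi(x)-\varphi(y)|\,p(t,x,y)\,dV_g(y)\,dV_g(x),
\]
and then bound \(G(t)\) separately for large and small times.

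\textbf{Large times} (\(t\ge1\)). From \(|\varphi(x)-\varphi(y)|\le|\varphi(x)|+|\varphi(y)|\), symmetry of the kernel, and \(\int_M p(t,x,y)\,dV_g(y)\le1\), we get \(G(t)\le 2\|\varphi\|_{L^1}\). This term is integrable against \(t^{-1-s}\) on \((1,\infty)\).

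\textbf{Small times} (\(0<t\le1\)). Here I need \(G(t)\le Ct\) (or any bound of order \(t^{\beta}\) with \(\beta>s\)). Let \(A=\operatorname{supp}\varphi\) and \(U=A^R\). I would split the \((x,y)\)-domain into three parts.

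(a) The part where both points lie outside \(A\) contributes nothing.

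(b) The part with \(x\in A\), \(y\notin U\), together with its symmetric counterpart, is bounded by \(2\|\varphi\|_\infty\int_A\int_{M\setminus A^R}p\). Proposition~\ref{prop:heat-tail-integral} makes this \(O(e^{-c/t})\).

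(c) The remaining part has both \(x,y\in \overline{U}\), a relatively compact set. There I would use \(|\varphi(x)-\varphi(y)|\le \|\nabla\varphi\|_\infty\, d_g(x,y)\) and bound \(\int_{U}\int_U d_g(x,y)\,p(t,x,y)\) by \(C\sqrt t\). This is a local Gaussian moment bound: a compact region has bounded geometry, so one can use Davies–Gaffney-type integrated estimates of the form of Proposition~\ref{prop:heat-tail-integral} applied on dyadic annuli \(d_g(x,y)\in[2^k\sqrt t,2^{k+1}\sqrt t)\) around small balls covering \(U\).

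This gives \(G(t)\le C\sqrt t\). Since \(\sqrt t\cdot t^{-1-s}\) is integrable near \(0\) only when \(s<1/2\), this bound is not enough for \(1/2\le s<1\).

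The hardest step is the small-time estimate for \(s\ge1/2\). Two routes are available.

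\textbf{Preferred route.} Bypass \(J\) and instead use the pointwise absolute integrability cited from \cite[Proposition 7.5]{caselli2023asymptotics} for the Fubini step. Combine it with the integrated antisymmetry: for each fixed \(t\), \(\iint(\varphi(x)-\varphi(y))p(t,x,y)=0\) by symmetry, since the integrand is absolutely integrable for fixed \(t\). Then the \(t\)-integral of zero is zero, provided \(\int_0^\infty t^{-1-s}\iint|\cdots|\) is finite.

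\textbf{Fallback route.} If that finiteness is still needed, I would use a second-order cancellation. Write \(\varphi(x)-\varphi(y)\) via a Taylor expansion in normal coordinates. The first-order term integrates to nearly zero against the almost-symmetric local kernel, so only \(O(t)\) survives.

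A cleaner alternative for the fallback is to work without absolute values. Set
\[
I_t(x)=\int_M\bigl(\varphi(x)-\varphi(y)\bigr)p(t,x,y)\,dV_g(y).
\]
Since \(\varphi\in\operatorname{Dom}(\Delta)\) in \(L^1\), the discussion in Section~\ref{Equiva} gives \(\|I_t\|_{L^1}\le t\|\Delta\varphi\|_{L^1}+\|\varphi(1-P_t\mathbf 1)\|_{L^1}\), and the second term is \(o(t)\)-controlled by \(\mathcal R_s\). Then \(\int_0^1 t^{-1-s}\|I_t\|_{L^1}\,dt<\infty\) justifies Fubini for the integral in the order \(dV_g\,dt\), and each \(\int_M I_t\,dV_g=0\) by symmetry.
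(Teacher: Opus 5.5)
Your final argument, the ``cleaner alternative'', is correct and is the paper's proof. For each fixed \(t\) the integrand \((\varphi(x)-\varphi(y))p(t,x,y)\) is absolutely integrable on \(M\times M\), bounded by \(2\|\varphi\|_{L^1}\). Hence \(\int_M I_t\,dV_g=0\) by symmetry of \(p\), and one then exchanges \(dV_g\) and \(dt\). Your bound on \(\|I_t\|_{L^1}\) is exactly what justifies that exchange; the paper passes over this step by an appeal to Section~\ref{Equiva}.

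Two details need tightening.

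First, \(\|\varphi-P_t\varphi\|_{L^1}\le t\|\Delta\varphi\|_{L^1}\) does not come from Section~\ref{Equiva}, which assumes stochastic completeness and gives only pointwise bounds. It holds on every complete manifold, since \(P_t\varphi-\varphi=\int_0^tP_\tau\Delta\varphi\,d\tau\) with \(\tau\mapsto P_\tau\Delta\varphi\) continuous in \(L^1(M)\). So \(C_c^\infty(M)\subset\operatorname{Dom}(\Delta_1)\) regardless of the core property.

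Second, replace ``\(o(t)\)-controlled by \(\mathcal R_s\)'' by an actual bound. Take \(\chi\in C_c^\infty(M)\) with \(0\le\chi\le1\) and \(\chi\equiv1\) on \(\operatorname{supp}\varphi\). On \(\operatorname{supp}\varphi\) one has \(0\le1-P_t\mathbf 1\le\chi-P_t\chi\le t\|\Delta\chi\|_{L^\infty}\), so \(\|\varphi(1-P_t\mathbf 1)\|_{L^1}\le t\|\Delta\chi\|_{L^\infty}\|\varphi\|_{L^1}\). Equivalently, \(\mathcal R_s\) is bounded on \(\operatorname{supp}\varphi\), and by Tonelli this term contributes \(\frac{\Gamma(1-s)}{s}\int_M|\varphi|\,\mathcal R_s\,dV_g<\infty\). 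Together with \(\|I_t\|_{L^1}\le2\|\varphi\|_{L^1}\) for all \(t\), this gives \(\int_0^\infty t^{-1-s}\|I_t\|_{L^1}\,dt<\infty\) for every \(0<s<1\).

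You should discard the routes you put first, for a stronger reason than you give. If \(\nabla\varphi(x)\neq0\), local Gaussian lower bounds for \(p\) give \(\int_M|\varphi(x)-\varphi(y)|\,p(t,x,y)\,dV_g(y)\ge c\sqrt t\) for small \(t\). Hence \(G(t)\ge c\sqrt t\), and \(J=+\infty\) for every \(s\ge\frac12\). This is not a weakness of your estimate. No Taylor-expansion refinement can help either, because absolute values inside the \(y\)-integral destroy the first-order cancellation.

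The same lower bound shows that the pointwise absolute integrability quoted in Section~\ref{Equiva}, on which your ``preferred route'' relies, fails for \(s\ge\frac12\). That route's closing proviso, read literally, is again \(J<\infty\). Consequently, for \(s\ge\frac12\) the integral \(\int_M(\varphi(x)-\varphi(y))K_s(x,y)\,dV_g(y)\) is not absolutely convergent at such \(x\), and your opening double integral over \(M\times M\) has no meaning as written.

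The lemma has to be read with \((-\Delta)^s_{\mathrm{hk}}\varphi(x)=\frac{s}{\Gamma(1-s)}\int_0^\infty I_t(x)\,t^{-1-s}\,dt\), taking the \(y\)-integral first for each \(t\). This is how the paper's proof begins, and it is exactly the setting in which your final argument works.
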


\begin{proof}
By definition,
\[
(-\Delta)_{\mathrm{hk}}^s\varphi(x)
=
\frac{s}{\Gamma(1-s)}
\int_0^\infty
\left(
\int_M \bigl(\varphi(x)-\varphi(y)\bigr)p(t,x,y)\,dV_g(y)
\right)\frac{dt}{t^{1+s}}.
\]
Set
\[
I_t(x):=\int_M \bigl(\varphi(x)-\varphi(y)\bigr)p(t,x,y)\,dV_g(y).
\]
Then
\[
I_t(x)
=
\Theta(t,x)\varphi(x)-P_t\varphi(x),
\]
where
\[
\Theta(t,x):=\int_M p(t,x,y)\,dV_g(y).
\]

By the integrability properties established in Section~\ref{Equiva}, the above formula is
well defined and
\[
(-\Delta)_{\mathrm{hk}}^s\varphi\in L^1(M).
\]
Therefore,
\[
\int_M (-\Delta)_{\mathrm{hk}}^s\varphi\,dV_g
=
\frac{s}{\Gamma(1-s)}
\int_0^\infty
\left(
\int_M I_t(x)\,dV_g(x)
\right)\frac{dt}{t^{1+s}}.
\]

Now
\[
\int_M I_t(x)\,dV_g(x)
=
\int_M \Theta(t,x)\varphi(x)\,dV_g(x)
-
\int_M P_t\varphi(x)\,dV_g(x).
\]
Using Fubini's theorem and the symmetry of the heat kernel, we compute
\[
\begin{aligned}
\int_M P_t\varphi(x)\,dV_g(x)
&=
\int_M\int_M p(t,x,y)\varphi(y)\,dV_g(y)\,dV_g(x)\\
&=
\int_M\left(\int_M p(t,x,y)\,dV_g(x)\right)\varphi(y)\,dV_g(y)\\
&=
\int_M\left(\int_M p(t,y,x)\,dV_g(x)\right)\varphi(y)\,dV_g(y)\\
&=
\int_M \Theta(t,y)\varphi(y)\,dV_g(y).
\end{aligned}
\]
Hence
\[
\int_M I_t(x)\,dV_g(x)=0.
\]
Substituting this into the previous identity, we obtain
\[
\int_M (-\Delta)_{\mathrm{hk}}^s\varphi\,dV_g=0.
\]
This completes the proof.
\end{proof}

\begin{coro}\label{cor:int-fraclap-Rs}
Let \((M,g)\) be a complete Riemannian manifold, let \(0<s<1\), and let
\(\varphi\in C_c^\infty(M)\). Then
\[
\int_M (-\Delta)^s\varphi\,dV_g
=
\int_M \mathcal R_s(x)\,\varphi(x)\,dV_g(x).
\]
In particular, if \((M,g)\) is stochastically complete, then
\[
\int_M (-\Delta)^s\varphi\,dV_g=0.
\]
\end{coro}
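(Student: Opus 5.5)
The corollary follows by integrating the pointwise decomposition \eqref{two equivalent}, $(-\Delta)^s\varphi=(-\Delta)^s_{\mathrm{hk}}\varphi+\mathcal R_s\varphi$, over $M$ and invoking Lemma~\ref{lem:hk-zero-mean}. Before doing so, we check that all three terms lie in $L^1(M)$, so that integrating term by term is legitimate.

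\textbf{Integrability of the three terms.} By Lemma~\ref{lem:fraclap-test-L1}, $(-\Delta)^s\varphi\in L^1(M)$. For the killing term, $0\le 1-\Theta(t,x)\le 1$ for all $t,x$. Moreover, since $\Theta(t,\cdot)=P_t\mathbf 1$ and $\mathbf 1-P_t\mathbf 1$ grows at most linearly for small $t$ on compact sets, $\mathcal R_s$ is locally bounded. This is the one point I would verify carefully. The bound I would aim for is a local one of the form
\[
1-\Theta(t,x)\le C_K\,t \qquad (x\in K,\ 0<t\le 1).
\]
To prove it, take a cutoff $\psi\in C_c^\infty(M)$ with $\psi\equiv1$ near $K$. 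Then $1-P_t\mathbf 1\le \psi-P_t\psi$ on $K$, and $|\psi-P_t\psi|\le t\|\Delta\psi\|_\infty$, arguing as in Step~3 of Lemma~\ref{lem:frac-lap-smooth-compact-support}.

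This bound makes the $t$-integral in \eqref{remainder} converge near $0$ uniformly on $K=\operatorname{supp}\varphi$, and the tail at infinity is controlled by $\int_1^\infty t^{-1-s}\,dt$. Hence $\mathcal R_s\varphi\in L^1(M)$, and therefore $(-\Delta)^s_{\mathrm{hk}}\varphi$, being the difference of two $L^1$ functions, is in $L^1(M)$ as well. This is consistent with what Lemma~\ref{lem:hk-zero-mean} uses.

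\textbf{Conclusion.} Integrating the decomposition gives
\[
\int_M(-\Delta)^s\varphi\,dV_g=\int_M(-\Delta)^s_{\mathrm{hk}}\varphi\,dV_g+\int_M\mathcal R_s\varphi\,dV_g,
\]
and the first term on the right vanishes by Lemma~\ref{lem:hk-zero-mean}, which is the claimed identity. For the final statement, if $M$ is stochastically complete then $\Theta\equiv1$, so $\mathcal R_s\equiv0$ by \eqref{remainder}, and the integral is zero.

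As an alternative route, not needed here, one could apply Lemma~\ref{lem:generator-pairing-Rs} directly with $u=\varphi$. This would require knowing $\varphi\in\operatorname{Dom}((-\Delta)^s_1)$, which follows from the $L^1$-convergence of the Bochner integral obtained in the proof of Lemma~\ref{lem:fraclap-test-L1}.

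The main obstacle is the local boundedness of $\mathcal R_s$ near $\operatorname{supp}\varphi$, which is needed to justify splitting the integral. Everything else is immediate from the results already established.
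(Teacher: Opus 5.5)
Your proposal is correct and follows the paper's proof: integrate the decomposition \eqref{two equivalent} over $M$, remove the heat-kernel part with Lemma~\ref{lem:hk-zero-mean}, and note that $\mathcal R_s\equiv 0$ under stochastic completeness. Your extra check that $\mathcal R_s$ is bounded on $\operatorname{supp}\varphi$ is sound and justifies integrating term by term, which the paper leaves implicit; it rests on the cutoff comparison $1-P_t\mathbf 1\le \psi-P_t\psi\le t\|\Delta\psi\|_{L^\infty(M)}$ on $K$.
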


\begin{proof}
By \eqref{two equivalent},
\[
(-\Delta)^s\varphi
=
(-\Delta)_{\mathrm{hk}}^s\varphi+\mathcal R_s\,\varphi.
\]
Integrating over \(M\) and using Lemma~\ref{lem:hk-zero-mean}, we obtain
\[
\int_M (-\Delta)^s\varphi\,dV_g
=
\int_M \mathcal R_s(x)\,\varphi(x)\,dV_g(x).
\]
If \((M,g)\) is stochastically complete, then \(\mathcal R_s\equiv0\), and the last
identity yields
\[
\int_M (-\Delta)^s\varphi\,dV_g=0,
\]
which complete the proof.
\end{proof}

\subsection{The Operator Core Criterion}

In this subsection, we establish the fourth characterization of stochastic completeness via the \(L^1\)-core criterion. As an application, we also prove the uniqueness of bounded distributional solutions to the homogeneous elliptic equation associated with the fractional Laplacian.

\begin{proof}[\textbf{Proof of Theorem \ref{thm:L1-core-stochastic-completeness}.}]
Assume that \((M,g)\) is stochastically complete. Fix \(\lambda>0\). We claim that
\[
\mathcal R_\lambda:=\{(\lambda-\Delta)\varphi:\varphi\in C_c^\infty(M)\}
\]
is dense in \(L^1(M)\). Suppose, by contradiction, that \(\mathcal R_\lambda\) is not dense in \(L^1(M)\). Then by the Hahn--Banach theorem (\cite[Chapter 3.6]{conway2017course}) and the duality \((L^1(M))^\ast=L^\infty(M)\), there exists a nonzero function
\[
u\in L^\infty(M),\qquad u\not\equiv0,
\]
such that
\[
\int_M u\,(\lambda-\Delta)\varphi\,dV_g=0
\qquad\forall\,\varphi\in C_c^\infty(M).
\]
Thus \(u\) is a distributional solution of
\[
(\lambda-\Delta)u=0,
\qquad\text{that is,}\qquad \Delta u=\lambda u
\quad\text{in }M.
\]
By the standard elliptic regularity theory, see \cite{grigoryan2009heat}. Hence \(u\) is a bounded smooth solution of
\[
\Delta u=\lambda u
\qquad\text{in }M,\]
this contradicts \cite[Theorem 8.18]{grigoryan2009heat}. Hence \(\mathcal R_\lambda\) is dense in \(L^1(M)\). By \cite[Chapter 1, Proposition 3.1]{ethier2009markov}, it follows that \(C_c^\infty(M)\) is a core for \(\Delta\).

Assume now that \(C_c^\infty(M)\) is a core for \(\Delta\). Fix \(\lambda>0\). Then by \cite[Chapter 1, Proposition 3.1]{ethier2009markov}, 
\[
\mathcal R_\lambda=\{(\lambda-\Delta)\varphi:\varphi\in C_c^\infty(M)\}
\]
is dense in \(L^1(M)\). We show that every bounded solution of
\[
\Delta u=\lambda u
\qquad\text{in }M
\]
must vanish identically. Let \(u\in L^\infty(M)\) be such a bounded solution. By elliptic regularity, \(u\) is smooth. In particular, for every \(\varphi\in C_c^\infty(M)\), we have
\[
\int_M u\,(\lambda-\Delta)\varphi\,dV_g=0.
\]
Therefore
\[
\int_M u\,f\,dV_g=0
\qquad\forall\,f\in L^1(M).
\]
Hence \(u=0\) almost everywhere on \(M\), and by smoothness, \(u\equiv0\).

We have shown that the equation
\[
\Delta u=\lambda u
\]
admits no nontrivial bounded solution. By  
\cite[Theorem 8.18]{grigoryan2009heat}, \((M,g)\) is stochastically complete. This completes the proof.
\end{proof}

 We now show that stochastic completeness is equivalent to the uniqueness of the trivial bounded distributional solution to the compensated elliptic equation.

\begin{proof}[\textbf{Proof of Theorem \ref{prop:unique-trivial-solution-equivalence}.}]
We prove the equivalence by showing \((i)\Rightarrow(ii)\) and \((ii)\Rightarrow(i)\).

\smallskip

\noindent
\((i)\Rightarrow(ii)\).
Assume that \((M,g)\) is stochastically complete. Then, by \eqref{two equivalent},
\[
(-\Delta)^s\varphi = (-\Delta)_{\mathrm{hk}}^s\varphi
\qquad\forall\,\varphi\in C_c^\infty(M),
\]
and
\[
\mathcal R_s\equiv 0.
\]
Thus the equation
\[
(-\Delta)^s v+\alpha v=\mathcal R_s
\qquad\text{in }M
\]
reduces to
\[
(-\Delta)^s v+\alpha v=0
\qquad\text{in }M.
\]

Let \(\psi\in C_c^\infty(M)\), and set
\[
\eta:=R_\alpha^{(s)}\psi.
\]
Then \(\eta\in \operatorname{Dom}((-\Delta)^s)\), and by the definition of the resolvent,
\[
\bigl((-\Delta)^s+\alpha\bigr)\eta=\psi
\qquad\text{in }L^1(M).
\]
By Theorem~\ref{thm:L1-core-stochastic-completeness} and \cite[Proposition 13.5]{schilling2012bernstein}, \(C_c^\infty(M)\) is a core for the \(L^1\)-generator \(-(-\Delta)^s\). Therefore, there exists a sequence \(\{\eta_n\}\subset C_c^\infty(M)\) such that
\[
\eta_n\to\eta
\quad\text{in }L^1(M),
\qquad
(-\Delta)^s\eta_n\to(-\Delta)^s\eta
\quad\text{in }L^1(M).
\]
Hence,
\[
(-\Delta)^s\eta_n+\alpha\eta_n\to\psi
\qquad\text{in }L^1(M).
\]

Since \(v\in L^\infty(M)\) is a distributional solution of
\[
(-\Delta)^s v+\alpha v=0
\qquad\text{in }M,
\]
testing against \(\eta_n\in C_c^\infty(M)\) gives
\[
\int_M v\,\bigl((-\Delta)^s\eta_n+\alpha\eta_n\bigr)\,dV_g=0
\qquad\text{for every }n.
\]
Passing to the limit as \(n\to\infty\), using that \(v\in L^\infty(M)\) and
\[
(-\Delta)^s\eta_n+\alpha\eta_n\to\psi
\qquad\text{in }L^1(M),
\]
we obtain
\[
\int_M v\,\psi\,dV_g=0.
\]
Since \(\psi\in C_c^\infty(M)\) was arbitrary, it follows that \(v=0\) in the sense of distributions, and hence \(v=0\) almost everywhere on \(M\).

\smallskip

\noindent
\((ii)\Rightarrow(i)\).
Assume now that \(M\) is not stochastically complete. Consider the defect function
\[
v_\alpha:=\mathbf 1-\alpha R_\alpha^{(s)}\mathbf 1.
\]
Since
\[
0\le \alpha R_\alpha^{(s)}\mathbf 1\le \mathbf 1,
\]
we have \(v_\alpha\in L^\infty(M)\) and \(v_\alpha\ge0\). Moreover, by Theorem~\ref{thm:fractional-conservative-equivalence}, \(v_\alpha\not\equiv0\).

On the other hand, by Proposition~\ref{solutionrs}, for every \(\varphi\in C_c^\infty(M)\),
\[
\int_M v_\alpha\bigl((-\Delta)^s\varphi+\alpha\varphi\bigr)\,dV_g
=
\int_M (-\Delta)^s\varphi\,dV_g.
\]
By Corollary~\ref{cor:int-fraclap-Rs},
\[
\int_M (-\Delta)^s\varphi\,dV_g
=
\int_M \mathcal R_s(x)\varphi(x)\,dV_g(x).
\]
Therefore,
\[
\int_M v_\alpha\bigl((-\Delta)^s\varphi+\alpha\varphi\bigr)\,dV_g
=
\int_M \mathcal R_s(x)\varphi(x)\,dV_g(x)
\qquad
\forall\,\varphi\in C_c^\infty(M).
\]
That is, \(v_\alpha\) is a bounded distributional solution of
\[
(-\Delta)^s v+\alpha v=\mathcal R_s
\qquad\text{in }M.
\]
Since \(v_\alpha\not\equiv0\), this contradicts the uniqueness of the trivial bounded distributional solution. Hence \(M\) must be stochastically complete.
\end{proof}

\subsection{Elliptic and Parabolic Characterizations}

In this subsection, we first show that the resolvent \(R_\alpha^{(s)}\mathbf 1\) provides a bounded distributional solution of the elliptic equation
\[
(-\Delta)^s u+\alpha u=\mathbf 1.
\]
We then use this fact to establish our final characterization theorem, which identifies the conservativeness of the subordinate semigroup with the uniqueness of bounded distributional solutions to the corresponding elliptic and parabolic problems.

\begin{proposition}\label{solutionrs} Let \((M,g)\) be a complete Riemannian manifold, let \(0<s<1,\alpha>0\), and let \(f\in L^{\infty}(M)\). Then $R_\alpha^{(s)}f$ is the distributional solution of the following equation:
\[(-\Delta)^su+\alpha u=f.\] 
\end{proposition}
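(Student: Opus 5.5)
The plan is to verify the distributional identity by duality: move $(-\Delta)^s+\alpha$ from the test function onto $R_\alpha^{(s)}f$ using the symmetry of the resolvent kernel and the resolvent equation applied to $\varphi$. Set $u:=R_\alpha^{(s)}f$, with $R_\alpha^{(s)}$ the $L^\infty$-extension $\int_0^\infty e^{-\alpha t}T_t^{(s)}f\,dt$. Then $u\in L^\infty(M)$ with $\|u\|_{L^\infty}\le\alpha^{-1}\|f\|_{L^\infty}$, so the pairing in the definition makes sense by Lemma~\ref{lem:fraclap-test-L1}. Fix $\varphi\in C_c^\infty(M)$ and put $\psi:=(-\Delta)^s\varphi+\alpha\varphi$. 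By Lemmas~\ref{lem:frac-lap-smooth-compact-support} and \ref{lem:fraclap-test-L1}, $\psi\in L^1(M)\cap L^2(M)$; the $L^2$ part holds because $\varphi\in H^{2s}(M)$. The goal is
\[
\int_M u\,\psi\,dV_g=\int_M f\,\varphi\,dV_g .
\]

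\textbf{Step 1 (duality).} Show that $\int_M (R_\alpha^{(s)}f)\,\psi\,dV_g=\int_M f\,(R_\alpha^{(s)}\psi)\,dV_g$ for $f\in L^\infty$ and $\psi\in L^1$. This follows from the symmetry identity $\int\psi\,T_t^{(s)}f=\int f\,T_t^{(s)}\psi$ proved at the start of the proof of Theorem~\ref{prop:conservativeness-subordination}. Multiply it by $e^{-\alpha t}$ and integrate in $t$. Fubini applies because $|e^{-\alpha t}\psi(x)T_t^{(s)}f(x)|\le e^{-\alpha t}\|f\|_\infty|\psi(x)|$ is integrable on $(0,\infty)\times M$. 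One also has to check that the Bochner integral defining $R_\alpha^{(s)}$ on $L^\infty$ and on $L^1$ agrees with the pointwise $t$-integral. For $L^1$ this is standard. For $L^\infty$, where strong measurability is delicate, I would simply take the pointwise formula $\int_0^\infty e^{-\alpha t}T_t^{(s)}f(x)\,dt$ as the meaning, justified by Lemma~\ref{lem:Linf-time-continuity-subordinate}, which gives continuity in $t$ in $L^\infty$ norm.

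\textbf{Step 2 (resolvent identity on the test function).} Since $\varphi\in\operatorname{Dom}((-\Delta)^s)$ in $L^2$, the spectral theorem gives $R_\alpha^{(s)}\big(((-\Delta)^s+\alpha)\varphi\big)=\varphi$ in $L^2(M)$. The resolvent $R_\alpha^{(s)}\psi$ computed in $L^1$ coincides with the $L^2$ one because $\psi\in L^1\cap L^2$ and both are the same $t$-integral of $T_t^{(s)}\psi$. Hence $R_\alpha^{(s)}\psi=\varphi$ a.e.

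\textbf{Conclusion.} Combining the two steps gives $\int u\,\psi=\int f\,\varphi$, which is exactly the defining identity of a distributional solution. The main obstacle is the consistency issue: the $L^1$, $L^2$ and $L^\infty$ realizations of $T_t^{(s)}$ and $R_\alpha^{(s)}$ must be shown to agree as kernel integrals, so that the Fubini/symmetry step is legitimate. I would handle it by working throughout with the kernel representation $T_t^{(s)}g(x)=\int p_t^{(s)}(x,y)g(y)\,dV_g(y)$, which is valid simultaneously for $g\in L^1$, $L^2$ and $L^\infty$, and by using Tonelli on positive and negative parts.
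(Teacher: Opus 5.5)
Your proposal is correct, and its first half matches the paper's proof. Both set $\psi=(-\Delta)^s\varphi+\alpha\varphi\in L^1(M)$, use the $L^1$--$L^\infty$ symmetry identity $\int_M\psi\,T_t^{(s)}f\,dV_g=\int_M f\,T_t^{(s)}\psi\,dV_g$ from the proof of Theorem~\ref{prop:conservativeness-subordination}, and integrate against $e^{-\alpha t}$ using Fubini.

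The two routes differ in how they show $\int_0^\infty e^{-\alpha t}\int_M f\,T_t^{(s)}\psi\,dV_g\,dt=\int_M f\varphi\,dV_g$.

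\textbf{The paper's route.} It stays in the time domain. It asserts that $t\mapsto T_t^{(s)}\varphi$ is $C^1$ into $L^1(M)$ with derivative $-(-\Delta)^sT_t^{(s)}\varphi$, and writes $T_t^{(s)}\psi=-\partial_t T_t^{(s)}\varphi+\alpha T_t^{(s)}\varphi$. It then integrates $-\frac{d}{dt}\bigl(e^{-\alpha t}F(t)\bigr)$ with $F(t)=\int_M f\,T_t^{(s)}\varphi\,dV_g$. This tacitly requires two facts: $\varphi$ lies in the domain of the $L^1$-generator, and the $L^1$-generator agrees there with the spectral $(-\Delta)^s\varphi$ that appears in $\psi$. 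The paper only records this compatibility in a remark.

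\textbf{Your route.} You move the whole resolvent onto $\psi$ and use the $L^2$ spectral identity $R_\alpha^{(s)}\bigl((-\Delta)^s+\alpha\bigr)\varphi=\varphi$, which only needs $\varphi\in H^{2s}(M)$. You combine this with two observations: $\psi\in L^1(M)\cap L^2(M)$, and the $L^1$, $L^2$ and $L^\infty$ realizations of $T_t^{(s)}$ and $R_\alpha^{(s)}$ are all given by the same kernel and pointwise $t$-integral. This is shorter and avoids the $L^1$-generator compatibility issue entirely; the only bookkeeping is joint measurability and Bochner-versus-pointwise integrals, which you flag correctly.

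\textbf{What each buys.} The paper's semigroup/integration-by-parts computation is the same one it reuses for the parabolic problem in Step~4 of the proof of Theorem~\ref{maintheorem}. Yours is more self-contained within the $L^2$ spectral framework already set up.

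A small point in your favour: your bound $\|u\|_{L^\infty}\le\alpha^{-1}\|f\|_{L^\infty}$ is the right one. The paper's claim $0\le T_t^{(s)}f\le\|f\|_{L^\infty}$ tacitly assumes $f\ge0$.
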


\begin{proof}
    Define
\[
u_\alpha:=R_\alpha^{(s)}f
=
\int_0^\infty e^{-\alpha t}T_t^{(s)}f\,dt.
\]
Since \(0\le T_t^{(s)}f\le \|f\|_{L^{\infty}}\), we have \(u_\alpha\in L^\infty(M)\). Next, we prove that \(u_\alpha\) satisfies
\[
\int_M u_\alpha\,(-\Delta)^s\varphi\,dV_g
+\alpha\int_M u_\alpha\,\varphi\,dV_g
=
\int_M \varphi f\,dV_g
\qquad\forall\,\varphi\in C_c^\infty(M).
\]
Fix \(\varphi\in C_c^\infty(M)\), and set
\[
\psi:=(-\Delta)^s\varphi+\alpha\varphi.
\]
Since \((-\Delta)^s\varphi\in L^1(M)\) and \(\varphi\in L^1(M)\), we have \(\psi\in L^1(M)\). Hence, by Fubini's theorem,
\[
\int_M u_\alpha\,\psi\,dV_g
=
\int_0^\infty e^{-\alpha t}
\left(\int_M \psi\,T_t^{(s)}f\,dV_g\right)\,dt.
\]
By the proof of Theorem \ref{prop:conservativeness-subordination}, we have
\[
\int_M \psi\,T_t^{(s)}f\,dV_g
=
\int_M f\,T_t^{(s)}\psi\,dV_g,
\]
therefore,
\[
\int_M u_\alpha\,\psi\,dV_g
=
\int_0^\infty e^{-\alpha t}\left(\int_M f\,T_t^{(s)}\psi\,dV_g\right)\,dt.
\]
Since \(\{T_t^{(s)}\}_{t\ge0}\) is a contraction semigroup on \(L^1(M)\), and
\(\varphi\in C_c^{\infty}(M)\), by \cite[Lemma 6.1.11]{davies2007linear} and \cite[Lemma 6.1.13]{davies2007linear}, the map
\[
t\longmapsto T_t^{(s)}\varphi
\]
belongs to \(C^1((0,\infty);L^1(M))\), with
\[
\frac{d}{dt}T_t^{(s)}\varphi=
-(-\Delta)^sT_t^{(s)}\varphi,
\]
then we obtain
\[
T_t^{(s)}\psi
=
-\partial_t\bigl(T_t^{(s)}\varphi\bigr)+\alpha T_t^{(s)}\varphi\quad\text{in}\quad L^1(M).
\]
Define
\[
F(t):=\int_M f\,T_t^{(s)}\varphi\,dV_g,
\]
since integration against \(dV_g\) defines a continuous linear functional on \(L^1(M)\), we may differentiate under the integral sign and obtain
\[
F'(t)
=
\int_M f\,\frac{d}{dt}T_t^{(s)}\varphi\,dV_g,
\]
then
\[
\int_M u_\alpha\,\psi\,dV_g
=
\int_0^\infty e^{-\alpha t}\bigl(-F'(t)+\alpha F(t)\bigr)\,dt
=
-\int_0^\infty \frac{d}{dt}\bigl(e^{-\alpha t}F(t)\bigr)\,dt.
\]
Hence
\[
\int_M u_\alpha\,\psi\,dV_g
=
F(0)-\lim_{t\to\infty}e^{-\alpha t}F(t).
\]
Since \(T_t^{(s)}\) is an \(L^1\)-contraction,
\[
|F(t)|
\le \|f\|_{L^{\infty}}\|T_t^{(s)}\varphi\|_{L^1(M)}
\le \|f\|_{L^{\infty}}\|\varphi\|_{L^1(M)},
\]
and thus
\[
\lim_{t\to\infty}e^{-\alpha t}F(t)=0.
\]
By the strong continuity of \(T_t^{(s)}\) on \(L^1(M)\),
\[
F(0)=\int_M f\,\varphi\,dV_g.
\]
Therefore
\[
\int_M u_\alpha\,\psi\,dV_g=\int_M f\,\varphi\,dV_g,
\]
that is,
\[
\int_M u_\alpha\,(-\Delta)^s\varphi\,dV_g
+\alpha\int_M u_\alpha\,\varphi\,dV_g
=
\int_M f\,\varphi\,dV_g
\qquad\forall\,\varphi\in C_c^\infty(M),
\]
which complete the proof.
\end{proof}

We finally prove our main theorem, which unifies the conservativeness of the subordinate semigroup with the elliptic and parabolic uniqueness properties.

\begin{proof}[\textbf{Proof of Theorem \ref{maintheorem}.}]
We divide the proof into several steps.

\medskip
\noindent\textbf{Step 1. \((i)\Rightarrow(ii)\).}
Assume that \(\{T_t^{(s)}\}_{t\ge0}\) is conservative. 
Now let \(u\in L^\infty(M)\) be any bounded distributional solution of
\[
(-\Delta)^s u+\alpha u=\mathbf 1
\qquad\text{in }M.
\]
Set
\[
v:=u-\alpha^{-1}.
\]
Then \(v\in L^\infty(M)\), and for every \(\varphi\in C_c^\infty(M)\), by Theorem \ref{thm:conservative-characterization-zero-mean}, we have
\[
\int_M v\,(-\Delta)^s\varphi\,dV_g
+\alpha\int_M v\,\varphi\,dV_g=0.
\]
By Theorem  \ref{prop:unique-trivial-solution-equivalence}, we have \(u=\alpha^{-1}\) almost everywhere on \(M\).

\medskip
\noindent\textbf{Step 2. \((ii)\Rightarrow(i)\).}
Assume now that \(\alpha^{-1}\) is the unique bounded distributional solution of
\[
(-\Delta)^s u+\alpha u=\mathbf 1
\qquad\text{in }M.
\]
By Step Proposition \ref{solutionrs}, the function $R_\alpha^{(s)}\mathbf 1$
is also a bounded distributional solution of the same equation. Hence, by uniqueness,
\[
R_\alpha^{(s)}\mathbf 1=\alpha^{-1}
\qquad\text{almost everywhere on }M.
\]
By Theorem \ref{thm:fractional-conservative-equivalence}, we obtain the conservativeness of \(\{T_t^{(s)}\}_{t\ge0}\).

\medskip
\noindent\textbf{Step 3. \((i)\Rightarrow(iii)\).}
Assume that \(\{T_t^{(s)}\}_{t\ge0}\) is conservative, namely
\[
T_t^{(s)}\mathbf 1=\mathbf 1
\qquad\text{for every }t>0.
\]
Let
\[
w\in L^\infty(M\times(0,\infty))
\]
be a bounded distributional solution of
\[
\begin{cases}
\partial_t w+(-\Delta)^s w=0 & \text{in }M\times(0,\infty),\\
w(\cdot,0)=\mathbf 1 & \text{on }M.
\end{cases}
\]
Set
\[
z(x,t):=w(x,t)-1.
\]
Since the semigroup is conservative, by Theorem \ref{thm:conservative-characterization-zero-mean}, hence \(z\) is a bounded distributional solution of
\[
\begin{cases}
\partial_t z+(-\Delta)^s z=0 & \text{in }M\times(0,\infty),\\
z(\cdot,0)=0 & \text{on }M.
\end{cases}
\]

Fix \(\alpha>0\), and define
\[
Z_\alpha(x):=\int_0^\infty e^{-\alpha t}z(x,t)\,dt.
\]
Since \(z\in L^\infty(M\times(0,\infty))\), we have \(Z_\alpha\in L^\infty(M)\). We claim that \(Z_\alpha\) is a bounded distributional solution of
\[
(-\Delta)^s Z_\alpha+\alpha Z_\alpha=0
\qquad\text{in }M.
\]

Indeed, let \(\varphi\in C_c^\infty(M)\), and choose \(\chi_R\in C_c^\infty([0,\infty))\) such that
\[
0\le \chi_R\le 1,\qquad
\chi_R\equiv 1 \ \text{on }[0,R],\qquad
\chi_R\equiv 0 \ \text{on }[R+1,\infty).
\]
Use the test function
\[
\Phi_R(x,t):=e^{-\alpha t}\chi_R(t)\varphi(x)
\]
in the distributional formulation for \(z\). Since the initial datum of \(z\) is zero, we obtain
\[
\int_0^\infty\int_M
z(x,t)\Bigl(-\partial_t\Phi_R(x,t)+(-\Delta)^s_x\Phi_R(x,t)\Bigr)\,dV_g(x)\,dt=0.
\]
Now
\[
-\partial_t\Phi_R(x,t)+(-\Delta)^s_x\Phi_R(x,t)
=
e^{-\alpha t}\chi_R(t)\bigl((-\Delta)^s\varphi+\alpha\varphi\bigr)
-
e^{-\alpha t}\chi_R'(t)\varphi.
\]
Therefore,
\[
\int_0^\infty\int_M
e^{-\alpha t}\chi_R(t)z(x,t)\bigl((-\Delta)^s\varphi+\alpha\varphi\bigr)\,dV_g(x)\,dt
=
\int_0^\infty\int_M
e^{-\alpha t}\chi_R'(t)z(x,t)\varphi(x)\,dV_g(x)\,dt.
\]
Since \(z\in L^\infty(M\times(0,\infty))\), \(\varphi\in L^1(M)\), and \(\chi_R'\) is supported in \([R,R+1]\), the right-hand side tends to \(0\) as \(R\to\infty\). Passing to the limit and using dominated convergence, we get
\[
\int_M Z_\alpha(x)\bigl((-\Delta)^s\varphi(x)+\alpha\varphi(x)\bigr)\,dV_g(x)=0
\qquad\forall\,\varphi\in C_c^\infty(M).
\]
Thus \(Z_\alpha\) is a bounded distributional solution of
\[
(-\Delta)^s Z_\alpha+\alpha Z_\alpha=0
\qquad\text{in }M.
\]
By Theorem ~\ref{prop:unique-trivial-solution-equivalence}, we conclude that
\[
Z_\alpha=0
\qquad\text{almost everywhere on }M.
\]
Hence, for every \(\varphi\in C_c^\infty(M)\),
\[
\int_0^\infty e^{-\alpha t}\left(\int_M z(x,t)\varphi(x)\,dV_g(x)\right)\,dt=0.
\]
By the uniqueness of the Laplace transform, it follows that
\[
\int_M z(x,t)\varphi(x)\,dV_g(x)=0
\qquad\text{for almost every }t>0.
\]
Since this holds for every \(\varphi\in C_c^\infty(M)\), we get
 \(z=0\) almost everywhere on \(M\times(0,\infty)\), i.e.
\[
w(x,t)=1
\qquad\text{almost everywhere on }M\times(0,\infty).
\]
This proves \((iii)\).

\medskip
\noindent\textbf{Step 4. \((iii)\Rightarrow(i)\).}
Assume that the only bounded distributional solution of
\[
\begin{cases}
\partial_t w+(-\Delta)^s w=0 & \text{in }M\times(0,\infty),\\
w(\cdot,0)=\mathbf 1 & \text{on }M
\end{cases}
\]
is the constant function \(w\equiv 1\). On the other hand, the function
\[
w(x,t):=T_t^{(s)}\mathbf 1(x)
\]
is a bounded distributional solution of the same Cauchy problem. To verify this rigorously, let \(\Phi\in C_c^\infty(M\times[0,\infty))\). We must show that
\[
\int_0^\infty\int_M
T_t^{(s)}\mathbf 1(x)\Bigl(-\partial_t\Phi(x,t)+(-\Delta)^s_x\Phi(x,t)\Bigr)\,dV_g(x)\,dt
=
\int_M \Phi(x,0)\,dV_g(x).
\]
By the proof of Theorem~\ref{prop:conservativeness-subordination}, for every \(t>0\),
\[
\int_M T_t^{(s)}\mathbf 1\,\psi\,dV_g
=
\int_M T_t^{(s)}\psi\,dV_g
\qquad\forall\,\psi\in L^1(M).
\]
Applying this with
\[
\psi(\cdot)= -\partial_t\Phi(\cdot,t)+(-\Delta)^s_x\Phi(\cdot,t),
\]
we obtain
\[
\int_0^\infty\int_M
T_t^{(s)}\mathbf 1(x)\Bigl(-\partial_t\Phi(x,t)+(-\Delta)^s_x\Phi(x,t)\Bigr)\,dV_g(x)\,dt
\]
\[
=
\int_0^\infty\int_M
T_t^{(s)}\!\Bigl(-\partial_t\Phi(\cdot,t)+(-\Delta)^s_x\Phi(\cdot,t)\Bigr)(x)\,dV_g(x)\,dt.
\]
Since \(T_t^{(s)}\) acts only on the space variable, it commutes with \(\partial_t\), and since it also commutes with \((-\Delta)^s\), the right-hand side equals
\[
\int_0^\infty\int_M
\Bigl(-\partial_t(T_t^{(s)}\Phi(\cdot,t))(x)-(-\Delta)^sT_t^{(s)}\Phi(\cdot,t)(x)+(-\Delta)^sT_t^{(s)}\Phi(\cdot,t)(x)\Bigr)\,dV_g(x)\,dt
\]
\[
=
-\int_0^\infty\int_M
\partial_t\bigl(T_t^{(s)}\Phi(\cdot,t)\bigr)(x)\,dV_g(x)\,dt.
\]
More explicitly, using
\[
\partial_t\bigl(T_t^{(s)}\Phi(\cdot,t)\bigr)
=
\partial_tT_t^{(s)}\Phi(\cdot,t)+T_t^{(s)}(\partial_t\Phi(\cdot,t))
=
-(-\Delta)^sT_t^{(s)}\Phi(\cdot,t)+T_t^{(s)}(\partial_t\Phi(\cdot,t)),
\]
we indeed recover
\[
T_t^{(s)}\!\Bigl(-\partial_t\Phi(\cdot,t)+(-\Delta)^s_x\Phi(\cdot,t)\Bigr)
=
-\partial_t\bigl(T_t^{(s)}\Phi(\cdot,t)\bigr).
\]

Hence
\[
\int_0^\infty\int_M
T_t^{(s)}\mathbf 1(x)\Bigl(-\partial_t\Phi(x,t)+(-\Delta)^s_x\Phi(x,t)\Bigr)\,dV_g(x)\,dt
=
-\int_0^\infty \frac{d}{dt}\left(\int_M T_t^{(s)}\Phi(\cdot,t)\,dV_g\right)\,dt.
\]
Since \(\Phi\) has compact support in time, we have \( \Phi(\cdot,t)=0 \) for all sufficiently large \(t\), and therefore
\[
T_t^{(s)}\Phi(\cdot,t)=0
\qquad\text{for all sufficiently large }t.
\]
Moreover, by strong continuity of \(T_t^{(s)}\) on \(L^1(M)\),
\[
\lim_{t\rightarrow0}T_t^{(s)}\Phi(\cdot,t)=\Phi(\cdot,0)
\qquad\text{in }L^1(M).
\]
Therefore,
\[
-\int_0^\infty \frac{d}{dt}\left(\int_M T_t^{(s)}\Phi(\cdot,t)\,dV_g\right)\,dt
=
\int_M \Phi(x,0)\,dV_g(x).
\]
This proves that \(w(x,t)=T_t^{(s)}\mathbf 1(x)\) is a bounded distributional solution of the fractional heat equation with initial datum \(\mathbf 1\).

By uniqueness, we must have
\[
T_t^{(s)}\mathbf 1=\mathbf 1
\qquad\text{for almost every }x\in M,\ \forall\,t>0.
\]
Therefore \(\{T_t^{(s)}\}_{t\ge0}\) is conservative, we completes the proof.
\end{proof}

	% ================= Acknowledgements =================

\noindent\textbf{Conflict of interest.} The authors declare that they have no conflict of interest.

\medskip

\noindent\textbf{Acknowledgements.} The authors are sincerely grateful to Professor Alexander Grigor'yan for his valuable suggestions and helpful guidance concerning this work.

% ================= Bibliography =================
	\printbibliography

\bigskip

\noindent\textit{Rui Chen}: School of Mathematical Sciences, Fudan University,\\[1mm]
Shanghai 200433, China\\[1mm]
Brandenburg University of Technology Cottbus--Senftenberg,\\[1mm]
Cottbus 03046, Germany\\[1mm]
\noindent\emph{Email:} \texttt{chenrui23@m.fudan.edu.cn}

\vspace{1em}

\noindent\textit{Bobo Hua}: School of Mathematical Sciences, Fudan University,\\[1mm]
Shanghai 200433, PR China\\[1mm]
Shanghai Center for Mathematical Sciences, Fudan University,\\[1mm]
Shanghai 200433, PR China\\[1mm]
\noindent\emph{Email:} \texttt{bobohua@fudan.edu.cn}
\end{document}